\pgfplotsset{width=10cm,compat=1.9}
\newtcolorbox[blend into=figures]{myfigure}[2][]{float=htb, title={#2},#1}
\def\R{\mathbb R}
\newcommand{\RR}{\mathbb{R}}
\def\Z{\mathbb Z}
\def\k{\kappa}
\def\deg{\text{deg}}
\def\Im{\text{Im}}
\def\dim{\text{dim}}
\newtheorem{theorem}{Theorem}[section]
\newtheorem{lemma}[theorem]{Lemma}
\newtheorem{proposition}[theorem]{Proposition}
\newtheorem{corollary}[theorem]{Corollary}
\theoremstyle{definition}
\newtheorem{definition}[theorem]{Definition}
\newtheorem{example}[theorem]{Example}
\newtheorem{question}[theorem]{Question}
\newtheorem{remark}[theorem]{Remark}
\newtheorem{notation}[theorem]{Notation}
\newtheoremstyle{example_contd}
{0.3cm}
{0.3cm}
{\upshape}
{}
{\bfseries\scshape}
{.}
{0.5em}
{ \thmname{#1} \thmnumber{ #2}\thmnote{#3} (continued)}
\theoremstyle{example_contd}
\newtheorem*{example_contd}{\hspace*{-0.45em}Example}
\newcommand{\lradot}{%
  \mathrel{\ooalign{\hfil$\vcenter{
   \hbox{$\mkern3mu\scriptscriptstyle\bullet$}}$\hfil\cr$\longleftrightarrow$\cr}
  }%
}
\begin{document}


\title[Absolute concentration robustness and multistationarity]{Absolute concentration robustness and multistationarity in reaction networks: Conditions for coexistence}

\author{Nidhi Kaihnsa, Tung Nguyen, and Anne Shiu}

\maketitle

\begin{abstract}
Many reaction networks arising in applications are multistationary, that is, they have the capacity for more than one steady state; while some networks exhibit absolute concentration robustness (ACR), which means that some species concentration is the same at all steady states. 
Both multistationarity and ACR are significant in biological settings, 
but only recently has attention focused on the possibility for these properties to coexist. 
Our main result states that such coexistence 
in at-most-bimolecular networks (which encompass most networks arising in biology) 
 requires at least $3$ species, $5$ complexes, and $3$ reactions.  
We prove additional bounds on the number of reactions 
for general networks based on the number of linear conservation laws.
Finally, we prove that, outside of a few exceptional cases,  ACR is equivalent to non-multistationarity for bimolecular networks that are small (more precisely, one-dimensional or up to two species).  Our proofs involve analyses of systems of sparse polynomials, 
and we also use classical results from chemical reaction network theory.
\vskip 0.1in
\noindent
{\bf Keywords:} Multistationarity, absolute concentration robustness, reaction networks, sparse polynomials.

\noindent {\bf 2020 MSC:}{ 
	92E20, 
	37N25, 
	26C10, 
	34A34, 
	34C08
}

\end{abstract}

\date{\today}


\section{Introduction}\label{sec:introduction}

A mass-action kinetics system exhibits {\em absolute concentration robustness} (ACR) if the steady-state value of at least one species is robust to fluctuations in initial concentrations of all species~\cite{ACR}. 
Another biologically significant property is 
the
existence of multiple steady states, that is, {\em multistationarity}. Significantly, this property has been linked to cellular decision-making and switch-like responses~\cite{Kholodenko2010, tyson-albert}.

As both ACR and multistationarity are important properties, it is perhaps surprising that their relationship was  explored only recently, when the present authors with Joshi showed that  
ACR and multistationarity together -- or even ACR by itself -- is highly atypical in randomly generated reaction networks. This result dovetails with the fact that the two properties are somewhat in opposition, as multiple steady states are not in general position in the presence of ACR.

The results of Joshi {\em et al.}
are asymptotic in nature 
(as the number of species goes to infinity), 
and they pertain to networks that are at-most-bimolecular (which is typical of networks arising in biology) and reversible (which is not)~\cite{joshi-kaihnsa-nguyen-shiu-1}.  
 This naturally leads to the following question: 
 
 \begin{question} \label{q:intro}
For multistationarity and ACR to coexist, how many species, reactions, and complexes are needed?
		 Which networks (without the requirement of being reversible) of small to medium size allow such coexistence? 
\end{question}

Another motivation for Question~\ref{q:intro} comes from synthetic biology. In order to design reaction networks with certain dynamical properties, we need to better understand the design principles that allow for such behaviors, as well as the constraints on the size (such as the minimum numbers of species, reaction, and complexes) of such networks.

Our work focuses on answering Question~\ref{q:intro}.
Broadly speaking, our results fall into two categories:
(i) results that give lower bounds on the dimension of a network or its number of species, reactions, or complexes;  
and (ii) results for certain classes of networks (one-dimensional, up to $2$ species, and so on).
Our primary focus is on at-most-bimolecular networks, but we also present results on general networks.

In the first category, 
our results are summarized in the following theorem, which gives some minimum requirements for ACR and nondegenerate multistationarity to coexist. This coexistence is typically on a nonzero-measure subset of the parameter space of reaction rate constants.

\begin{theorem}[Main result] \label{thm:summary-theorem}
Let $G$ be an at-most-bimolecular reaction network with $n$ species 
such that there exists a vector of positive rate constants $\kappa^*$  
such that the mass-action system $(G,\kappa^*)$  has ACR and is nondegenerately multistationary. Then $G$ has:
	\begin{enumerate}
		\item at least $3$ species (that is, $n \geq 3$),
		\item at least $3$ reactant complexes (and hence, at least 3 reactions) and at least 5 complexes (reactant and product complexes), and
		\item dimension at least $2$.
	\end{enumerate}
	If, additionally, $G$ is full-dimensional (that is, $G$ has no linear conservation laws), then $G$ has:
	\begin{enumerate}
		\setcounter{enumi}{3}
		\item at least $n+2$ reactant complexes (and hence, at least $n+2$ reactions), and
		\item dimension at least $3$.
	\end{enumerate}

\end{theorem}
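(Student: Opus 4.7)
The plan is to treat the five bounds in three groups. Note first that (5) follows immediately from (1): under the full-dimensionality hypothesis the stoichiometric subspace is all of $\RR^n$, so $\dim G = n \geq 3$ whenever $n \geq 3$. This leaves four substantive cases, which I would attack by combining (a) the paper's later classification of small bimolecular networks, (b) deficiency-theoretic obstructions to multistationarity such as the Deficiency Zero Theorem and the Shinar--Feinberg condition, and (c) direct analysis of the steady-state polynomials using sparse-polynomial techniques.

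For (1), the case $n = 1$ is immediate: ACR on a single species forces the steady-state value to be a fixed constant, which precludes a second positive steady state. For $n = 2$, I would invoke the equivalence of ACR and non-multistationarity for at-most-bimolecular two-species networks established later in the paper, and verify each of the finitely many listed exceptions individually to confirm that none realizes both properties simultaneously. Part (3) is handled in the same spirit: a one-dimensional stoichiometric subspace makes all reaction vectors parallel, reducing the steady-state locus within each stoichiometric class to the zero set of a single univariate polynomial, which under ACR can have at most one positive root.

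For (2), the strategy is to enumerate all at-most-bimolecular networks with at most two reactant complexes, respectively at most four total complexes, up to relabeling of species, and rule each out. The enumeration is organized by the deficiency $\delta = c - \ell - s$: when $c$ is small, either $\delta = 0$ (so the Deficiency Zero Theorem precludes multistationarity on any weakly-reversible component) or $\delta \geq 1$ with the complex graph tightly constrained to finitely many topologies. Each surviving candidate is excluded either by showing its steady-state polynomial system admits a unique positive root once the ACR condition is imposed, or by a direct incompatibility with standard sufficient conditions for ACR.

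For (4), full-dimensionality gives $s = n$, and ACR forces the positive steady-state variety to lie in a hyperplane of the form $x_i = c_0$ for the ACR species. Substituting this constraint into the remaining steady-state equations yields a reduced polynomial system on $n-1$ variables whose monomial support comes from the reactant complexes of the original network. A sparse-polynomial count, combined with the requirement that the reduced system carry at least two positive solutions (as forced by nondegenerate multistationarity), then yields $c_r \geq n + 2$. I expect the main technical obstacle to lie in the case-by-case enumeration underlying part (2): the number of topologically distinct at-most-bimolecular networks with three or four complexes is modest but not small, and each must be handled with a tailored mix of deficiency arguments and explicit polynomial analysis.
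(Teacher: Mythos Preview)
Your overall plan is reasonable, but there is a substantive gap in how you handle parts~(3) and~(4), and it stems from a single missed observation: in the paper, neither of these bounds uses the ACR hypothesis at all.

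For part~(4), the paper shows directly (Proposition~\ref{prop:rank-of-N} and Theorem~\ref{theorem:smallestACRMSS}) that a full-dimensional network with at most $n+1$ reactant complexes cannot be nondegenerately multistationary, full stop. The argument is that with $\leq n$ reactants every positive steady state is degenerate (Proposition~\ref{prop:2-or-3-rxns3species}), while with exactly $n+1$ reactants, row-reducing the coefficient matrix $N$ gives binomial steady-state equations $m_i=\beta_i m_{n+1}$, whose positive solutions form (after taking logs) an affine subspace; if this subspace contains two points it is positive-dimensional and every steady state is degenerate. Your plan instead substitutes the ACR constraint $x_i=c_0$ and appeals to an unspecified ``sparse-polynomial count'' on the reduced system. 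That reduced system is $n$ equations in $n-1$ unknowns (overdetermined), monomials can collapse under the substitution, and the nondegeneracy condition lives on the original Jacobian, not the reduced one; I do not see how a monomial count alone closes the argument. The binomial/toric route is both simpler and stronger.

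For part~(3), your one-line argument (``under ACR the univariate polynomial has at most one positive root'') fails when the ACR species happens to be catalyst-only in every reaction: then $x_i$ is constant along each one-dimensional stoichiometric class, so ACR says nothing about the number of roots within a class. The paper sidesteps this by proving (Lemma~\ref{lem:1-dim-no-mss}, via the Lin--Tang--Zhang arrow-diagram results and the classification of Lemma~\ref{lem:1-d-infinite-steady-states}) that a one-dimensional \emph{bimolecular} network is never nondegenerately multistationary---again, ACR is not invoked. If you prefer to use ACR, you must separately treat the catalyst-only case and show degeneracy there, as in Proposition~\ref{prop:1-d-no-coexistence}.

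Two smaller points. For part~(1) with $n=2$, the ``equivalence of ACR and non-multistationarity'' you cite is proven only for \emph{reversible} two-species networks (Theorem~\ref{thm:summary-2-species}); the general bimolecular case is Theorem~\ref{thm:2species}, whose proof is a genuine case analysis of which reactions can occur when $f_i$ or $f_i|_{x_1=\alpha}$ vanishes (Propositions~\ref{lem:when-ODE-is-0}--\ref{lem:when-evaluated-ODE-is-0}), not a lookup of finitely many exceptions. For part~(2), your deficiency route for the $5$-complex bound matches the paper, but the $3$-reactant bound is not obtained by enumeration: the paper combines Proposition~\ref{prop:2-or-3-rxns3species} (at most $n$ reactants in a full-dimensional network forces degeneracy) with a short structural analysis in the non-full-dimensional case.
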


For the proof of Theorem~\ref{thm:summary-theorem}, we refer the reader to Section~\ref{sec:preliminary-results} for part~(3) (Lemma~\ref{lem:1-dim-no-mss}); 
Section~\ref{sec:preclude-mss-or-acr}
for parts~(1), (2), and~(5) (Theorem~\ref{thm:at-least-3-and-5}); and
Section~\ref{sec:results-general} for part~(4) (Theorem~\ref{theorem:smallestACRMSS}). 
Additionally, many of the lower bounds in Theorem~\ref{thm:summary-theorem} are tight. 
Indeed, this is shown for parts~(1)--(3) through the following network:
$\{A+B \to 2C \to 2B,~ C\to A \}$ (Example~\ref{example:min3}).  
As for part~(4), this bound is proven for networks that need not be at-most-bimolecular, and its tightness is shown in that context (Proposition~\ref{prop:family-min-num-reactants}).

While Theorem~\ref{thm:summary-theorem} concerns {\em nondegenerate} multistationarity, we also investigate the capacity for ACR together with {\em degenerate} multistationarity, specifically,  in networks with $4$ reactant complexes  (Proposition~\ref{prop:4rxns3species}).  
Finally, we prove two additional results in the spirit of Theorem~\ref{thm:summary-theorem}.  
The first states that $3$ is the minimum number of pairs of reversible reactions needed (in reversible networks) 
for multistationarity, even without ACR (Theorem~\ref{prop:2-rev-rxn-no-mss}).  
The second concerns networks that are not full-dimensional, and states the minimum number of reactant complexes needed for the coexistence of ACR and nondegenerate multistationarity
is $n-k+1$, where $1 \leq k \leq n-2$ is the number of linearly independent conservation laws
(Theorem~\ref{thm:min-num-reac-with-k-cons-law}).

As for our second category of results, we start with one-dimensional networks, 
a class of networks for which ACR~\cite{acr-dim-1,MST}, multistationarity~\cite{joshi2017small, shiu2019nondegenerate}, and even multistability \cite{tang-zhang} is well studied. 
Such networks do not allow for the coexistence of ACR and nondegenerate multistationarity  (Proposition~\ref{prop:1-d-no-coexistence}).  Moreover, one-dimensional bimolecular networks can only be multistationary if they are degenerately so (Lemma \ref{lem:1-dim-no-mss}).  Moreover, we explicitly characterize all such degenerate networks (Lemma~\ref{lem:1-d-infinite-steady-states}). 
Here our proofs make use of recent results of Lin, Tang, and Zhang~\cite{lin-tang-zhang, tang-zhang}.

Another class of at-most-bimolecular networks we analyze are those with exactly 2 species (Section~\ref{sec:BinaryRN}). 
For such networks that are reversible, we characterize the property of
unconditional ACR, which means that ACR occurs for all possible values of rate constants
(Theorem~\ref{thm:summary-2-species}). 
As for networks that need not be reversible, we show that ACR and multistationarity can coexist, but only in a degenerate way.  
Moreover, up to relabelling species, only two such networks allow such coexistence for a nonzero-measure subset of the space of reaction rate constants (Theorem~\ref{thm:2species}). 

Our works fits into a growing body of literature that explores the minimal conditions needed for various dynamical behaviors,
 including the two properties that are the focus of the current work:
multistationarity~\cite{joshi2017small, 
lin-tang-zhang, 
shiu2019nondegenerate}
and ACR~\cite{acr-dim-1,MST}.  There are additional such studies on 
multistability~\cite{tang-wang-hopf}
and 
Hopf bifurcations~\cite{banaji-boros, banaji-boros-hofbauer-3-rxn,MR4241183,tang-zhang,smallestHopf} (which generate periodic orbits).  For instance, in analogy to Theorem~\ref{thm:summary-theorem} above, the presence of Hopf bifurcations requires an at-most-bimolecular network to have at least $3$ species, $4$ reactions, and dimension $3$~\cite{banaji-boros,smallestHopf}.

This article is organized as follows: Section~\ref{sec:preliminaries} introduces reaction networks, multistationarity, and ACR. Section~\ref{sec:preliminary-results} contains several results on steady states and their nondegeneracy.  We use these results in Sections~\ref{sec:preclude-mss-or-acr} and~\ref{sec:results-general} to prove our main results.  
We conclude with a discussion in Section~\ref{sec:discussion}.

\section{Background}\label{sec:preliminaries}

This section recalls the basic setup and definitions involving reaction networks (Section~\ref{sec:rxn-network}), 
the dynamical systems they generate (Section~\ref{sec:mass-action}), absolute concentration robustness (Section~\ref{sec:ACR-system}),  and a concept pertaining to networks with only $1$ species: ``arrow diagrams'' (Section~\ref{sec:arrow-diagrams}).

\subsection{Reaction networks} \label{sec:rxn-network}
  
A {\em reaction network} $G$ is a directed graph in which the vertices are non-negative-integer linear combinations of {\em species} $X_1, X_2, \ldots, X_n$.  Each vertex is a {\em complex}, and we denote the complex at vertex $i$ by $y_i=\sum_{j=1}^{n}y_{ij}X_j$ (where $y_{ij}\in\Z_{\geq 0}$) or $y_i=(y_{i1}, y_{i2}, \dots, y_{in} )$.
Throughout, we assume that each species $X_i$, where $i=1,2,\dots,n$, appears in at least one complex.

Edges of a network $G$ are {\em reactions}, and it is standard to represent a reaction $(y_i, y_j)$ by $y_i \to y_j$.  
In such a reaction,  $y_i$ is the \textit{reactant complex}, and $y_j$ is the \textit{product complex}. 
A species $X_k$ is a {\em catalyst-only species in reaction} $y_i \to y_j$ if $y_{ik} = y_{jk}$. 
In examples, it is often convenient to write species as $A,B,C,\dots$ (rather than $X_1,X_2,X_3,\dots$) and also to view a network as a set of reactions, where the sets of species and complexes are implied. 

\begin{example} \label{ex:generalized-degenerate-network}
	The reaction network $\{ 0 \leftarrow A \to 2A ,~ B \leftarrow A+B\}$ has 2 species, 5 complexes, and 3 reactions. The species $B$ is a catalyst-only species in the reaction $B \leftarrow A+B$.
\end{example}

A reaction network is \textit{reversible} if every edge of the graph is bidirected.  
A reaction network is \textit{weakly reversible} if every connected component of the graph is strongly connected.   
Every reversible network is weakly reversible.

\begin{example} \label{ex:reversible-mss-network}
	The following network is reversible: 
	$\{
	A+B {\leftrightarrows} 2A, ~2B {\leftrightarrows} A,  0 \leftrightarrows B \}$.
\end{example}

One focus of our work is on \textit{at-most-bimolecular} reaction networks (or, for short, {\em bimolecular}), which means that every complex $y_i$ satisfies 
$y_{i1}+y_{i2} + \dots + y_{in} \leq 2$.
Equivalently, each complex has the form $0$, $X_i$, $X_i+X_j$, or $2X_i$ (where $X_i$ and $X_j$ are species). The networks in  Examples~\ref{ex:generalized-degenerate-network}--\ref{ex:reversible-mss-network} are bimolecular.

\subsection{Mass-action systems} \label{sec:mass-action}
Let $r$ denote the number of reactions of $G$. 
We write the $i$-th reaction as $y_i \to y_i'$ 
and assign to it a positive 
\textit{rate constant} 
$\kappa_{i}\in \RR_{> 0}$. 
The \textit{mass-action system} arising from a network $G$ and a vector of positive rate constants $\kappa=(\kappa_1, \kappa_2, \dots, \kappa_r)$, which we denote by $(G,\kappa)$, is the following dynamical system arising from mass-action kinetics: 
    \begin{equation}\label{eq:mass_action_ODE}
    \frac{dx}{dt} 
    ~=~
    \sum_{i=1}^r  \kappa_{i} x^{y_i} (y_i'-y_i) ~=:~ f_{\kappa}(x)~,
    \end{equation}
where $x^{y_i} := \prod_{j=1}^n x_j^{y_{ij}}$.
Observe that the right-hand side of the ODEs~\eqref{eq:mass_action_ODE} consists of polynomials 
$f_{\kappa,i}$, 
for $i=1,\dots,n$. For simplicity, we often write $f_i$ instead of $f_{\kappa,i}$.

The question of which polynomials $f_i$ can appear as right-hand side of mass-action ODEs is answered in the following result~\cite[Theorem 3.2]{HT}.

\begin{lemma} \label{lem:hungarian}
	Let $f:\mathbb{R}^n \rightarrow \mathbb{R}^n$ be a polynomial function, that is, assume that $f_i \in \mathbb{R}[x_1, x_2 \dots, x_n]$ for $i=1,2,\dots, n$.  Then $f$ arises as the right-hand side of the differential equations~\eqref{eq:mass_action_ODE} (for some choice of network $G$ and vector of positive rate constants $\kappa$) if and only if, for all $i=1,2,\dots,n$, every monomial in $f_i$ with negative coefficient is divisible by $x_i$.
\end{lemma}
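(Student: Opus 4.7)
The plan is to prove both directions by analyzing, for each coordinate $i$ and each exponent vector $y\in\Z_{\geq 0}^n$, the contribution to the coefficient of $x^y$ in $f_i$ coming from reactions whose reactant complex is $y$. The forward direction follows from writing out these contributions in general; the reverse direction is constructive, producing an explicit network whose mass-action ODEs match $f$ monomial by monomial.

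For the forward direction ($\Rightarrow$), assume $f=f_\kappa$ for some network $G$ and rate vector $\kappa$. Group reactions by reactant complex: if $y$ is a reactant complex, let $R_y=\{y\to y'\text{ in } G\}$. Then by~\eqref{eq:mass_action_ODE}, the coefficient of the monomial $x^y$ in $f_i$ equals
\[
c_{y,i}~=~\sum_{y\to y'\in R_y}\kappa_{y\to y'}\,(y'_i-y_i),
\]
and monomials $x^y$ for which $y$ is not a reactant complex do not appear. If $c_{y,i}<0$, then since every $\kappa_{y\to y'}$ is positive, at least one summand must be negative, meaning some $y'_i<y_i$; in particular $y_i\geq 1$, so $x_i$ divides $x^y$. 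This proves the claim.

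For the reverse direction ($\Leftarrow$), I would build $G$ and $\kappa$ directly from the monomial expansion of $f$. Write $f_i=\sum_{a}c_{i,a}\,x^{a}$, where the sum runs over exponent vectors $a\in\Z_{\geq 0}^n$ with $c_{i,a}\neq 0$. For each such triple $(i,a,c_{i,a})$ I will add one reaction as follows:
\begin{itemize}
\item If $c_{i,a}>0$, include the reaction $a\to a+e_i$ with rate constant $c_{i,a}$, where $e_i$ is the $i$-th standard basis vector.
\item If $c_{i,a}<0$, the hypothesis gives $a_i\geq 1$, so $a-e_i\in\Z_{\geq 0}^n$ is a valid complex; include the reaction $a\to a-e_i$ with rate constant $-c_{i,a}>0$.
\end{itemize}
Each such reaction contributes $\kappa\,x^a(y'-y)$ to $dx/dt$, which is a vector supported in the single coordinate $i$ and equal there to $c_{i,a}\,x^a$. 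Summing over all reactions reproduces $f$ exactly. Species that fail to appear in any complex (if any) can be accommodated by appending a trivial reaction such as $X_k\to X_k$ if desired, though under the paper's standing assumption this issue does not arise.

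The main obstacle — and really the only nontrivial observation — is the constraint in the negative-coefficient case of the reverse direction: we must guarantee that $a-e_i$ has nonnegative entries, which is precisely what the divisibility hypothesis $x_i\mid x^a$ secures. Beyond that, the argument is essentially bookkeeping, because each constructed reaction touches only one coordinate of $f$, so there is no interference between the contributions of different monomials and no need for a combined accounting argument.
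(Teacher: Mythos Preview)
Your proof is correct in both directions. The forward direction is immediate from the form of~\eqref{eq:mass_action_ODE}, and your constructive reverse direction---assigning to each monomial $c_{i,a}x^a$ a single reaction $a\to a\pm e_i$ with rate $|c_{i,a}|$---is the standard argument; the divisibility hypothesis is exactly what guarantees $a-e_i\in\Z_{\geq 0}^n$ in the negative-coefficient case, and the constructed reactions are pairwise distinct so no collisions occur.

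However, there is nothing to compare against: the paper does not supply its own proof of this lemma. It is stated as a known result with a citation to H\'ars and T\'oth~\cite{HT} (their Theorem~3.2), and is used throughout the paper as a black box. Your argument is essentially the classical proof of that result.
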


Next, observe that the mass-action ODEs~\eqref{eq:mass_action_ODE} are in the linear subspace of $\mathbb{R}^n$ spanned by all reaction vectors $y_i' - y_i$ (for $i=1,2,\dots, r$).  We call this the \textit{stoichiometric subspace} and denote it by $S$.  
The {\em dimension} of a network is the dimension of its stoichiometric subspace.  
(This dimension is sometimes called the ``rank''~\cite{splitting-banaji, tang-wang-hopf}.)
In particular, if $\dim(S)=n$ (that is,
$S= \mathbb{R}^n$), we say that $G$ is {\em full-dimensional}.  

A trajectory $x(t)$ of~\eqref{eq:mass_action_ODE} with initial condition $x(0) = x^0 \in \R_{>0}^n$ remains, for all positive time, in the following \textit{stoichiometric compatibility class} of $G$~\cite{FeinbergLec79}:
\begin{align} \label{eq:scc}
P_{x(0)} ~:=~ (x(0)+S)\cap \RR_{\geq 0}^n~.
\end{align}

For full-dimensional networks, there is a unique stoichiometric compatibility class: $P=\RR_{\geq 0}^n$.  For networks that are not full-dimensional, every nonzero vector $w$ in $S^{\perp}$ yields a (linear) {\em conservation law} $\langle w,x \rangle = \langle w, x(0) \rangle$ that is satisfied by every $x \in P_{x(0)}$, where $\langle -,- \rangle$ denotes the usual inner product on $\R^n$.

\begin{example_contd}[\ref{ex:generalized-degenerate-network}] \label{ex:generalized-degenerate-network-ode} 
	The network $\{ 0 \overset{\kappa_1}{\leftarrow} A \overset{\kappa_2}{\to} 2A ,~ B \overset{\kappa_3}{\leftarrow} A+B\}$ 
	has a one-dimensional stoichiometric subspace (spanned by $(1,0)$) and generates the following mass-action ODEs~\eqref{eq:mass_action_ODE}:
	\begin{align} \label{ex:ode-gen-deg-net}
	&\frac{dx_1}{dt} ~=~ -\kappa_1x_1+\kappa_2x_1-\kappa_3x_1 x_2=x_1(-\kappa_1+\kappa_2-\kappa_3 x_2)\\
	&\frac{dx_2}{dt} ~=~ 0.
	\notag
	\end{align}
	%
	Observe that the negative monomials in the first ODE are $-\kappa_1x_1$ and $-\kappa_3x_1 x_2$, and each of these is divisible by $x_1$, which is consistent with Lemma~\ref{lem:hungarian}. 
	Next, the stoichiometric compatibility classes~\eqref{eq:scc} are rays of the following form (where $T>0$):
	\begin{align} \label{eq:SCC-for-gen-deg-net}
	\{ (x_1,x_2) \in \mathbb{R}^2_{\geq 0} \mid x_2 = T\}~.
	\end{align}
The equation $ x_2 = T$ is the unique (up to scaling) conservation law.
\end{example_contd}

A {\em steady state} of a mass-action system is a non-negative vector $x^*\in \R_{\geq 0}^n$ at which the right-hand side of the ODEs \eqref{eq:mass_action_ODE} vanishes: $f_{\kappa}(x^*)=0$. 
Our main interest in this work is in {\em positive} steady states $x^*\in \R_{> 0}^n$.
The set of all positive steady states of a mass-action system 
can have positive dimension in $\RR^n$, 
but this set typically intersects each stoichiometric compatibility class in finitely many points~\cite{feliu-henriksson-pascual}.  
Finally, a steady state $x^*$ is \textit{nondegenerate} if $\Im(df_{\kappa}(x^*)|_S)=S$, where $df_{\kappa}(x^*)$ is the Jacobian matrix of $f_\kappa$ evaluated at $x^*$.

We consider multiple steady states at two levels: systems and networks.  
A mass-action system $(G,\kappa)$ is \textit{multistationary} (respectively, {\em nondegenerately multistationary}) 
if there exists a stoichiometric compatibility class having more than one positive steady state (respectively, nondegenerate positive steady state). 
A reaction network $G$ is \textit{multistationary} if there exists a vector of positive rate constants $\kappa$ such that $(G,\kappa)$ is multistationary.  
For a reaction network $G$, we let ${\rm cap}_{pos}(G)$ 
(respectively,  ${\rm cap}_{nondeg}(G)$)
denote the maximum possible number of positive steady states (respectively, nondegenerate positive steady states) in a stoichiometric compatibility class. 

\begin{example_contd}[\ref{ex:generalized-degenerate-network}] \label{ex:generalized-degenerate-network-steady-state}
	We return to the network $G=\{ 0 \overset{\kappa_1}{\leftarrow} A \overset{\kappa_2}{\to} 2A ,~ B \overset{\kappa_3}{\leftarrow} A+B\}$ 
	and its ODEs~\eqref{ex:ode-gen-deg-net}.  A direct computation reveals that when $\kappa_1 \geq \kappa_2$, there is no positive steady state.
	On the other hand, when $\kappa_2 > \kappa_1$, the steady states form exactly one stoichiometric compatibility class~\eqref{eq:SCC-for-gen-deg-net} -- namely, the one given by $T = (\kappa_2 - \kappa_1)/\kappa_3$ -- and all such steady states are degenerate.  Hence, $G$ is multistationary but {\em not} nondegenerately multistationary.
\end{example_contd}

\begin{example_contd}[\ref{ex:reversible-mss-network}] \label{ex:3-revesible-reactions}
	The following (full-dimensional) reaction network and indicated rate constants yield a mass-action system with $3$ nondegenerate positive steady states~\cite[Remark 3.6]{joshi-kaihnsa-nguyen-shiu-1}:
	\begin{align} \label{eq:3-rev-rxn-rates}
	&
	\left\{
	A+B \stackrel[1/4]{1/32}{\leftrightarrows} 2A, \quad 2B \stackrel[1/4]{1}{\leftrightarrows} A, \quad 0 \stackrel[1]{1}{\leftrightarrows} B \right\} ~.
	\end{align}
	Therefore, this network is nondegenerately multistationary.
\end{example_contd}

\subsection{Deficiency and absolute concentration robustness} \label{sec:ACR-system}

The {\em deficiency} of a reaction network~$G$ is 
$\delta = m - \ell- \dim(S)$, where $m$ is the number of vertices (or complexes), $\ell$ is the number of connected components of $G$ (also called {\em linkage classes}), and $S$ is the stoichiometric subspace. 
The deficiency is always non-negative~\cite{FeinbergLec79}, 
and it plays a central role in many classical results on the dynamical properties of mass-action systems~\cite{AC:non-mass,ACK:product,AN:non-mass,F1,H,H-J1}.

Two such results are stated below. These results, which are due to Feinberg and Horn~\cite{Feinberg1987,FeinDefZeroOne, H}, are stated for weakly reversible networks (the setting in which we use these results later).

\begin{lemma}[Deficiency-zero theorem] \label{lem:thm:def-0}
	Deficiency-zero networks are not multistationary.  Moreover, if 
	$G$ is a weakly reversible network with deficiency zero, then for every vector of positive rate constants $\kappa$, 
	the mass-action system $(G,\kappa)$ admits a unique positive steady state in every stoichiometric compatibility class.
\end{lemma}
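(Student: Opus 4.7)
The plan is to follow the classical Horn--Jackson--Feinberg route via \emph{complex-balanced} steady states; this is the standard proof of the deficiency-zero theorem, so I will sketch its main ingredients. First, I would rewrite the mass-action ODE in the factored form $\dot x = Y A_\kappa \Psi(x)$, where the columns of $Y$ are the complexes $y_1,\dots,y_m$, the matrix $A_\kappa$ is the weighted Laplacian of the reaction digraph (with the rate constant $\kappa_{y\to y'}$ appearing as the off-diagonal entry from column $y$ to row $y'$), and $\Psi(x) = (x^{y_1},\dots,x^{y_m})^{T}$. A positive vector $x^*$ is called \emph{complex balanced} if $A_\kappa \Psi(x^*) = 0$, and every such $x^*$ is automatically a positive steady state of $(G,\kappa)$.

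Second, I would use the linear-algebraic identity $\delta = \dim(\ker Y \cap \Im(A_\kappa))$, so that the deficiency-zero hypothesis becomes $\ker Y \cap \Im(A_\kappa) = \{0\}$. Combined with the standard fact that weak reversibility forces $\ker A_\kappa$ to admit a nonnegative basis supported on the linkage classes, this implies that every positive steady state of $(G,\kappa)$ is complex balanced. Existence of such an $x^*$ inside each stoichiometric compatibility class would then follow from Birch's theorem, applied to the monomial map $x \mapsto \Psi(x)$ on an affine slice $x^0 + S$.

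Third, to obtain \emph{uniqueness} within each compatibility class---and hence non-multistationarity---I would invoke the Horn--Jackson Lyapunov function
\[
V(x) \;=\; \sum_{i=1}^n \bigl( x_i \log(x_i / x_i^*) - x_i + x_i^* \bigr),
\]
which is strictly convex on $\R^n_{>0}$, proper on each closed compatibility class, and satisfies $\dot V(x) \le 0$ along trajectories, with equality only at complex-balanced equilibria. Strict convexity on the affine slice $x^* + S$ then forces at most one positive zero of $\dot V$ per class. The non-weakly-reversible case of part~(1) can be handled separately: a deficiency-zero network without weak reversibility admits no positive steady state at all, so multistationarity is vacuously ruled out.

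The main obstacle is the dissipation inequality $\dot V \le 0$, which is where the complex-balance condition enters essentially. Writing $u_z := x^z/(x^*)^z$, the time derivative simplifies, after invoking the complex-balance identity at $x^*$ to cancel cross terms, to a sum of the form $\sum_{y \to y'} \kappa_{y \to y'} (x^*)^y (u_y - u_{y'})(\log u_y - \log u_{y'})$, and the summand-wise bound $(a-b)(\log a - \log b)\ge 0$ then finishes the job. Once this step is in place, the existence (via Birch) and uniqueness (via convex analysis on $x^* + S$) assertions are standard.
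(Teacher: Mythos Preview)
The paper does not prove this lemma; it is stated as a classical result attributed to Feinberg and Horn with citations, so there is no in-paper argument to compare against. Your sketch is the standard Horn--Jackson--Feinberg route and is the right outline: factor the ODE as $Y A_\kappa \Psi(x)$, use $\delta=\dim(\ker Y\cap\Im A_\kappa)=0$ to force every positive steady state to be complex balanced, produce one complex-balanced equilibrium from weak reversibility, and then get existence/uniqueness in each class via the convex entropy $V$ together with a Birch-type argument.

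One technical slip: your claimed identity for $\dot V$ is the \emph{detailed-balance} (symmetric) form, not the complex-balance form. In general
\[
\dot V(x)\;=\;\sum_{y\to y'}\kappa_{y\to y'}(x^*)^{y}\,u_y\bigl(\log u_{y'}-\log u_y\bigr),\qquad u_z:=\frac{x^{z}}{(x^*)^{z}},
\]
which does \emph{not} reduce to $\sum \kappa_{y\to y'}(x^*)^y(u_y-u_{y'})(\log u_y-\log u_{y'})$ unless the edge weights $\kappa_{y\to y'}(x^*)^y$ are symmetric in $y,y'$. The correct step is to apply $\log t\le t-1$ termwise to get
\[
\dot V(x)\;\le\;\sum_{y\to y'}\kappa_{y\to y'}(x^*)^{y}\bigl(u_{y'}-u_y\bigr),
\]
and then the complex-balance identity at $x^*$ makes this upper bound vanish. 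Equality forces $u$ constant on each linkage class, which combined with weak reversibility characterizes the complex-balanced set. With that correction your argument goes through; the uniqueness-per-class then follows either from strict convexity of $V$ on $x^*+S$ (critical points there are exactly the points with $\log(x/x^*)\in S^\perp$, i.e.\ the complex-balanced equilibria) or equivalently from Birch's theorem.

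A smaller imprecision: Birch's theorem gives uniqueness of the intersection of the complex-balanced locus with each compatibility class \emph{once you have one} complex-balanced $x^*$; it does not by itself produce the first $x^*$. Existence of that first equilibrium uses the Matrix--Tree positive kernel vector of $A_\kappa$ together with the deficiency-zero condition to solve $\Psi(x^*)\in\ker A_\kappa$. You gesture at this but run the two steps together.
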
 

\begin{lemma}[Deficiency-one theorem] \label{lem:thm:def-1}
	Consider a weakly reversible network $G$ with connected components (linkage classes) $G_1$, $G_2$, \dots, $G_{\ell}$.  
	Let $\delta$ denote the deficiency of $G$, and (for all $i=1,2,\dots, \ell$) let $\delta_i$ denote the deficiency of $G_i$.  
	Assume the following:
	\begin{enumerate}
		\item $\delta_i \leq 1$ for all $i=1,2,\dots,\ell$, and
		\item $\delta_1+\delta_2+ \dots + \delta_{\ell} = \delta$.
	\end{enumerate}
	Then $G$ is not multistationary: 
	for every vector of positive rate constants $\kappa$, 
	the mass-action system $(G,\kappa)$ admits a unique positive steady state in every stoichiometric compatibility class.
\end{lemma}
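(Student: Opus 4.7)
The plan is to follow Feinberg's structural framework for mass-action ODEs of weakly reversible networks, reducing the steady-state equations to per-linkage-class conditions and then resolving them by a Birch-type log-linear injectivity argument. First, I would recast~\eqref{eq:mass_action_ODE} in the standard form $f_\kappa(x) = Y A_\kappa \psi(x)$, where $Y$ is the $n \times m$ matrix whose columns are the complex vectors $y_i$, $A_\kappa$ is the weighted Laplacian of the reaction graph (its off-diagonal entries built from the $\kappa_i$), and $\psi(x)_i = x^{y_i}$. For weakly reversible $G$, three standard facts organize everything: (i) $\dim \ker A_\kappa = \ell$, with a strictly positive basis $\{b^{(1)},\ldots,b^{(\ell)}\}$ supported on the respective linkage classes; (ii) $\Im(A_\kappa) = D := \{v \in \RR^m : \sum_{i \in G_j} v_i = 0 \text{ for each } j\}$, and $D$ decomposes as $D = \bigoplus_{j=1}^{\ell} D_j$; (iii) $\delta_j = \dim(\ker Y \cap D_j)$ for each linkage class $G_j$, and similarly $\delta = \dim(\ker Y \cap D)$. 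Since $A_\kappa$ is block-diagonal across linkage classes, the hypothesis $\delta = \sum_j \delta_j$ upgrades the always-valid inclusion $\bigoplus_j (\ker Y \cap D_j) \subseteq \ker Y \cap D$ to an equality.

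Second, I would decouple the steady-state condition $A_\kappa \psi(x) \in \ker Y \cap D$ into linkage-class-level conditions $A_{\kappa,j}\psi_j(x) \in \ker Y \cap D_j$, where $\psi_j$ denotes the restriction to complexes in $G_j$. When $\delta_j = 0$, this forces $A_{\kappa,j}\psi_j(x) = 0$, i.e., $\psi_j(x) \in \RR_{>0}\cdot b^{(j)}$, which is complex-balancing within $G_j$. When $\delta_j = 1$, a single extra direction $v^{(j)} \in \ker Y \cap D_j$ is available, giving $A_{\kappa,j}\psi_j(x) = t_j v^{(j)}$ for some $t_j \in \RR$. In the purely deficiency-zero case, uniqueness of a positive steady state per stoichiometric compatibility class follows directly from Birch's theorem (log-linear injectivity): the map $x \mapsto \log x$ restricted to $x^0 + S$ meets each fiber of the complex-balancing equations in at most one point, while the presence of at least one positive solution is supplied by the Horn--Jackson construction of a Lyapunov function.

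The main obstacle is the $\delta_j = 1$ case, where $\psi_j(x)$ is no longer pinned to a ray. One must show simultaneously that (a) the global compatibility constraint $x \in x^0 + S$ forces a unique choice of the scalars $t_j$ across all deficiency-one linkage classes, and (b) for each such choice there is at most one positive $x$ realizing the resulting system. Feinberg's proof addresses this through the deficiency-one algorithm, a combinatorial sign-pattern analysis on the reactant complexes of each deficiency-one linkage class, combined with strict monotonicity of $x \mapsto \log x$ transverse to $S$. In a self-contained write-up this step would occupy most of the technical length; the surrounding passages are essentially the linear-algebraic bookkeeping set up in the first paragraph together with the Birch-type convexity argument needed to promote per-class uniqueness to global uniqueness within each stoichiometric compatibility class.
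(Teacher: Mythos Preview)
The paper does not prove this lemma. It is stated in Section~\ref{sec:ACR-system} as a classical result and attributed to Feinberg and Horn~\cite{Feinberg1987,FeinDefZeroOne,H}; no proof is given. So there is no ``paper's own proof'' to compare your proposal against.

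That said, your outline is a reasonable high-level sketch of Feinberg's structural approach: the factorization $f_\kappa = Y A_\kappa \psi$, the block decomposition of the Laplacian across linkage classes, the identification $\delta_j = \dim(\ker Y \cap D_j)$, and the use of the hypothesis $\delta = \sum_j \delta_j$ to decouple the steady-state condition are all standard and correct. Your treatment of the $\delta_j = 0$ blocks via complex balancing and Birch-type injectivity is also the right picture.

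Two remarks. First, you conflate the deficiency-one \emph{theorem} with the deficiency-one \emph{algorithm}; these are distinct tools in Feinberg's work. The proof of the theorem itself (as in~\cite{FeinDefZeroOne}) does not proceed via the sign-pattern algorithm but through a more direct analysis of the one-dimensional fibers $\{A_{\kappa,j}\psi_j(x) = t_j v^{(j)}\}$ and a monotonicity argument in the log-coordinates. Second, your sketch correctly flags the $\delta_j = 1$ case as the hard part but does not actually resolve it: showing that the scalars $t_j$ are uniquely determined by the compatibility-class constraint, and that the resulting system has a unique positive solution, is the entire technical content of Feinberg's argument and is not something that falls out of the bookkeeping you set up. As a proof proposal this is an honest outline of where the work lies, but it is not itself a proof.
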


Our next topic, ACR, like multistationarity, is analyzed at the level of systems and also networks.  

\begin{definition}[ACR] \label{def:acr}
	Let $X_i$ be a species of a reaction network $G$ with $r$ reactions.
	\begin{enumerate}
		\item For a fixed vector of positive rate constants $\kappa \in \mathbb{R}^r_{>0}$, 
		the mass-action system $(G,\kappa)$ has {\em absolute concentration robustness} (ACR) in $X_i$ if $(G,\kappa)$ has a positive steady state and in every positive steady state $x \in \RR_{> 0}^n$ of the system, the value of $x_i$  is the same. This value of $x_i$ is the \textit{ACR-value} of $X_i$. 
		\item The reaction network $G$ 
		has \textit{unconditional ACR}  in species $X_i$ if,  for every vector of positive rate constants $\kappa \in \mathbb{R}^r_{>0}$, the mass-action system $(G,\kappa)$ has ACR in $X_i$. 
	\end{enumerate}
\end{definition}

\begin{remark}[Existence of positive steady states] \label{rem:reversible}
	ACR requires the existence of a positive steady state (Definition~\ref{def:acr}(1)).  
	This requirement is sometimes not included in definitions of ACR in the literature.  
	However, this is not an extra requirement for some of the networks we consider, namely, 
	weakly reversible networks, for which 
	positive steady states are guaranteed to exist (see Deng {\em et al.}~\cite{Deng} and Boros~\cite{boros2019existence}).
\end{remark}

\begin{remark}\label{rem:measure}
	The property of unconditional ACR is often too restrictive. 
	Thus, many of our results focus on ACR (or other properties) that hold for some full-dimensional subset of the parameter space of rate constants $\R^r_{> 0}$ (where $r$ is the number of reactions of a given network).
	The Lesbesgue measure of such a subset is nonzero. For simplicity, we use ``measure'' to mean Lebesgue measure. 
\end{remark}

\begin{example_contd}[\ref{ex:generalized-degenerate-network}] \label{ex:generalized-degenerate-network-AR}
	We revisit
	the network $\{ 0 \overset{\kappa_1}{\leftarrow} A \overset{\kappa_2}{\to} 2A ,~ B \overset{\kappa_3}{\leftarrow} A+B\}$. 
	From our earlier analysis, the mass-action system has ACR in $B$ when $\kappa_2 > \kappa_1$ (which defines a nonzero-measure subset of the rate-constants space $\R^3_{> 0}$), but lacks ACR when 
	$\kappa_2 \leq \kappa_1$ (as there are no positive steady states).
\end{example_contd}

\begin{example} \label{ex:why-need-reversible-in-prop}
	Consider the following network $G$, which is bimolecular and full-dimensional:
	\begin{align*}
	\{
	2X_2 \xleftarrow{\kappa_3} X_2 \stackrel[\kappa_2]{\kappa_1}{\rightleftarrows} X_1+X_2  \xrightarrow{\kappa_4} X_1 \}. 
	\end{align*}
	The mass-action ODEs are as follows:
	\begin{align}
	\dot{x}_1
	&~=~
	\kappa_1x_2 -\kappa_2x_1 x_2
	~=~
	(\kappa_1-\kappa_2x_1)x_2 \notag \\ 
	\dot{x}_2
	&~=~
	\kappa_3x_2-\kappa_4x_1x_2
	~=~
	(\kappa_3-\kappa_4x_1)x_2~.
	\end{align}
	When
	$\tfrac{\kappa_1}{\kappa_2} \neq \tfrac{\kappa_3}{\kappa_4}$, there are no positive steady states and hence no ACR.  
	Now assume $\tfrac{\kappa_1}{\kappa_2}=\tfrac{\kappa_3}{\kappa_4}$.  
	In this case, 
	the positive steady states are defined by the line $x_1=\tfrac{\kappa_1}{\kappa_2}$, and so the system is multistationary and has ACR in species $X_1$. However, all the steady states of this system are degenerate. 
\end{example}

\begin{example} \label{ex:unconditional-ACR}
	Consider the following network~\cite[Example 2.6]{joshi-kaihnsa-nguyen-shiu-1}, which we call $G$:
	\begin{align*}
	\left\{
	A \stackrel[\kappa_1]{\kappa_2}{\leftrightarrows} A + B, 
	\quad 
	2B \stackrel[\kappa_3]{\kappa_4}{\leftrightarrows} 3B, 
	\quad A 
	\stackrel[\kappa_5]{\kappa_6}{\leftrightarrows} 2A 
	\right\}~.
	\end{align*}  
	The mass-action ODEs~\eqref{eq:mass_action_ODE} are as follows:
	\begin{align*}
	&\frac{dx_1}{dt} ~=~ \kappa_5x_1 - \kappa_6x_1^2\\
	&\frac{dx_2}{dt}~ =~ \kappa_1x_1-\kappa_2x_1x_2+\kappa_3x_2^2-\kappa_4x_2^3.
	\end{align*}
	It follows that $G$ has unconditional ACR in species $A$ with ACR-value $\kappa_5/\kappa_6$ (the existence of positive steady states comes from the fact that $G$ is reversible; recall Remark~\ref{rem:reversible}).
\end{example}

The following result, which is~\cite[Lemma 5.1]{MST}, concerns ACR in one-dimensional networks.

\begin{lemma} \label{lem:1-dim-reactants}
	Let $G$ be a one-dimensional network with species $X_1, X_2, \dots, X_n$.
	If $G$ has unconditional ACR in some species $X_{i^*}$, then 
	the reactant complexes of $G$ differ only in species $X_{i^*}$ 
	(more precisely, if $y$ and $\widetilde y$ are both reactant complexes of $G$, then $y_i=\widetilde{y}_i$ for all 
	$i \in \{1,2,\dots,n\} \smallsetminus \{i^* \} $).
\end{lemma}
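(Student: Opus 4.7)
The plan is to translate unconditional ACR into algebraic constraints on the reactants of $G$ and then derive a contradiction by a dimension count. Let $R$ be the set of reactant complexes and $\pi\colon \Z^n \to \Z^{n-1}$ the projection dropping the $i^*$-th coordinate; the conclusion of the lemma is equivalent to $|\pi(R)| = 1$, so I assume toward a contradiction that $|\pi(R)| \geq 2$. Since $\dim S = 1$, fix a spanning vector $v$ of $S$ and write the mass-action ODE as $\dot x = g_\kappa(x)\,v$, where
\[
g_\kappa(x) ~=~ \sum_{y \in R} a_y(\kappa) \, x^y, \qquad a_y(\kappa) ~=~ \sum_{y \to y'} \kappa_{y \to y'}\, c_{y \to y'},
\]
with $c_{y \to y'} \neq 0$ defined by $y' - y = c_{y \to y'}\, v$. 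The positive steady state set is $V_\kappa = \{x \in \R^n_{>0} : g_\kappa(x) = 0\}$, and unconditional ACR in $X_{i^*}$ says that $V_\kappa$ is nonempty and contained in $\{x_{i^*} = c(\kappa)\}$ for every $\kappa \in \R^r_{>0}$.

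The next step is to show that $(x_{i^*} - c(\kappa)) \mid g_\kappa(x)$ in $\R[x_1, \dots, x_n]$ for $\kappa$ in a dense open subset of $\R^r_{>0}$. By Sard's theorem applied to the polynomial $g_\kappa$, for such $\kappa$ the positive zero set $V_\kappa$ is either empty or a smooth real hypersurface of codimension one; unconditional ACR rules out emptiness, and since $V_\kappa$ then lies in the codimension-one hyperplane $\{x_{i^*} = c(\kappa)\}$ it must be an open subset of that hyperplane. Hence $g_\kappa$ vanishes on a Zariski-dense subset of $\{x_{i^*} = c(\kappa)\}$, which gives the divisibility. Writing $\hat x = (x_i)_{i \neq i^*}$ and extracting the coefficient of $\hat x^w$ from the identity $g_\kappa(c(\kappa), \hat x) \equiv 0$ yields, for each $w \in \pi(R)$, the relation
\begin{equation} \label{eq:plan_constraint}
\sum_{\substack{y \in R \\ \pi(y) = w}} a_y(\kappa) \, c(\kappa)^{y_{i^*}} ~=~ 0.
\end{equation}

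The system \eqref{eq:plan_constraint} consists of $|\pi(R)|$ algebraically independent relations in $(a_y)_{y \in R}$ and $c$: each involves a disjoint group of $a_y$'s across the different values of $w$, while all share the single scalar $c$. These equations cut out a subvariety of $\R^{|R|} \times \R_{>0}$ of codimension $|\pi(R)|$ whose projection onto the coefficient space $\R^{|R|}$ has codimension $|\pi(R)| - 1$ (the generic fiber of this projection is a finite set of common positive roots of univariate polynomials, so dimension is preserved). On the other hand, the map $\kappa \mapsto (a_y(\kappa))$ sends $\R^r_{>0}$ onto a full-dimensional open subset of $\R^{|R|}$, because the coefficients $a_y$ for different reactants depend on disjoint groups of rate constants $\kappa_{y \to y'}$ and thus vary independently in nondegenerate open cones. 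A full-dimensional open subset cannot lie in a proper subvariety, so $|\pi(R)| - 1$ must equal zero, contradicting the standing assumption $|\pi(R)| \geq 2$. The principal obstacle is the divisibility step: the positive real zero locus of a polynomial can in principle be of strictly lower real dimension than its complex Zariski closure, so one must invoke Sard's theorem, or an equivalent implicit-function-theorem argument at a smooth positive zero of $g_\kappa$ where $\partial g_\kappa / \partial x_{i^*} \neq 0$ showing that $g_\kappa(c(\kappa), \hat x)$ vanishes on an open set of $\hat x$-space (and hence identically), to ensure $V_\kappa$ is a genuine codimension-one hypersurface for generic $\kappa$.
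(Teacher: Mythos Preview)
The paper does not supply its own proof of this lemma: it is quoted verbatim from \cite[Lemma~5.1]{MST}, so there is no in-paper argument to compare against.

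Judged on its own, your strategy is sound and the dimension count in the second half is correct. Because the $a_y$'s for distinct reactants $y$ depend on disjoint blocks of rate constants, the map $\kappa\mapsto(a_y(\kappa))_{y\in R}$ does land in a full-dimensional open box of $\R^{|R|}$. And the constraint set $\widetilde\Sigma=\{(a,c): P_w(c,a)=0\ \forall w\}$ really has dimension $|R|+1-|\pi(R)|$ (for each fixed $c>0$ the $|\pi(R)|$ linear equations are nontrivial and involve disjoint groups of $a_y$'s), so its projection to $a$-space has dimension at most $|R|-1$ once $|\pi(R)|\ge 2$, hence measure zero; pulling back along the linear surjection $\kappa\mapsto a$ gives the contradiction.

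The one place that needs tightening is the divisibility step. Writing ``By Sard's theorem applied to the polynomial $g_\kappa$'' is not quite right: Sard for a \emph{fixed} $\kappa$ only says that almost every \emph{value} of $g_\kappa$ is regular, and tells you nothing about the particular value $0$. What you actually need is the parametric transversality theorem: since $\partial g_\kappa(x)/\partial\kappa_{y\to y'}=c_{y\to y'}\,x^y\neq 0$ on $\R^n_{>0}$, the map $(\kappa,x)\mapsto g_\kappa(x)$ has $0$ as a regular value, and then Sard applied to the projection $\{g=0\}\to\R^r_{>0}$ shows that for almost every $\kappa$ the fibre $V_\kappa$ is a smooth $(n-1)$-manifold. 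That, together with $\emptyset\neq V_\kappa\subset\{x_{i^*}=c(\kappa)\}$, forces $V_\kappa$ to be open in the hyperplane and hence gives $(x_{i^*}-c(\kappa))\mid g_\kappa$. Your closing paragraph gestures at this, but the alternative you offer (find a steady state where $\partial g_\kappa/\partial x_{i^*}\neq 0$) is not self-evidently available for a \emph{given} $\kappa$; the clean way to secure a smooth positive zero is exactly the parametric argument above. Once that is stated correctly, the proof is complete.
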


\subsection{Arrow diagrams} \label{sec:arrow-diagrams}

In this subsection, we recall the \textit{arrow diagrams} associated to one-species networks. These diagrams are useful for stating results about such 
networks~\cite{joshi2017small, MST,mv-small-networks}.

\begin{definition}[Arrow diagram] \label{def:arrow-diagram}
	Let $G$ be a reaction network with only one species $X_1$.  Let $m$ denote the number of (distinct) reactant complexes of $G$,  which we list in increasing order of molecularity: $a_1 X_1, a_2X_1, \dots, a_m X_1$ (so, $a_1 < a_2 < \dots < a_m$).  The {\em arrow diagram} of $G$ is the vector  
	$\rho = (\rho_1, \rho_2, \ldots , \rho_m) \in  \{\to , \leftarrow, \lradot \}^m$ defined by: 
	\begin{equation*}
	\rho_i	
	~:=~ 
	\left\lbrace\begin{array}{ll}
	\to & \text{if for every reaction $a_i X_1 \to bX_1$ in $G$, the inequality $b > a_i$ holds} \\
	\leftarrow & \text{if for every reaction $a_i X_1 \to bX_1$ in $G$, the inequality $b < a_i$ holds} \\
	\lradot & \text{otherwise.}
	\end{array}\right.
	\end{equation*}
\end{definition}

\begin{example}  ~
	\begin{enumerate}[(1)]
		\item The network $\{0 \leftarrow A,~ 2A \to 3A\}$ has arrow diagram $(\leftarrow, \to)$.
		\item The network $\{0 \leftarrow A,~ A \to 2A,~ 2A \to 3A\}$ has arrow diagram $(\lradot, \to)$.
	\end{enumerate}
\end{example}

It is often useful to consider the arrow diagrams of ``embedded'' one-species networks, as follows.

\begin{definition} \label{def:embeddednetwork}
	Let $G$ be a reaction network with species $X_1, X_2, \dots, X_n$.  Given a species $X_i$, the corresponding {\em embedded one-species network} of $G$ is obtained by deleting some (possibly empty) subset of the reactions, replacing each remaining reaction $a_1 X_1 + a_2 X_2 + \dots +a_s X_s \to b_1 X_1 + b_2 X_2 + \dots +b_s X_s $ by the reaction $a_i X_i \to b_i X_i$, and then deleting any trivial reactions (i.e, reactions of the form $a_i X_i \to a_i X_i$, in which the reactant and product complexes are equal) and keeping only one copy of duplicate reactions.
\end{definition}

\begin{example} \label{ex:embed}
	Consider the network $G=\{ 0\leftrightarrows  B \to A \}$.  The following networks are embedded one-species networks of $G$:
	$\{0 \to B\}$, $\{0 \leftarrow B \}$, $\{0 \leftrightarrows B \}$, and $\{0 \to A\}$.
\end{example}

\section{Results on steady states and nondegeneracy} \label{sec:preliminary-results}

This section contains results on the steady states of mass-action systems.  We use these results in later sections 
to prove our main results. 
Section~\ref{sec:ODE-reactants} analyzes the steady states of full-dimensional networks, 
while Section~\ref{sec:conservation-laws}
pertains to non-full-dimensional networks.
Next, Section~\ref{sec:bimol-prelim-results} focuses on bimolecular networks and investigates scenarios in which the right-hand side of a mass-action ODE vanishes.
Finally, 
Section~\ref{sec:multismallnetworks} concerns bimolecular networks that are reversible.

\subsection{Full-dimensional networks }\label{sec:ODE-reactants}
Consider a reaction network $G$ with 
 $n$ species, $r$ reactions, and exactly $j$ reactant complexes\footnote{{ A network has {\em exactly $j$ reactant complexes} if the set of distinct reactant complexes has size $j$.}}; and let $\kappa^* \in \mathbb{R}^r_{>0}$ be a vector of positive rate constants.
We often rewrite the mass-action ODE system~\eqref{eq:mass_action_ODE} for $(G,\kappa^*)$
 as follows:
\begin{align}\label{eqn:ODEexpression}
\begin{bmatrix} 
dx_1/dt \\
dx_2/dt \\
\vdots \\
dx_n/dt \\
\end{bmatrix}
~=~
N 
\begin{bmatrix} 
m_1 \\
m_2 \\
\vdots \\
m_j 
\end{bmatrix}	~,
\end{align} 
where $N$ is an $(n \times j)$-matrix (with real entries) and $m_1,m_2,\dots,m_j$ are distinct monic monomials in $x_1, x_2, \ldots,x_n$ given by the reactant complexes.

\begin{example_contd}[\ref{ex:why-need-reversible-in-prop}]
The network 
$	\{
	2X_2 \xleftarrow{\kappa_3} X_2 \stackrel[\kappa_2]{\kappa_1}{\rightleftarrows} X_1+X_2  \xrightarrow{\kappa_4} X_1 \} 
$
	has two reactant complexes, which yield the monomials $m_1:=x_2$ and $m_2:=x_1x_2$.  
Consider $(\kappa_1,\kappa_2,\kappa_3,\kappa_4)=(1,2,3,6)$ (so, $\tfrac{\kappa_1}{\kappa_2}=\tfrac{\kappa_3}{\kappa_4}$ holds). Now the matrix $N$, as in~\eqref{eqn:ODEexpression}, is as follows:
	\begin{align*}
		N ~:=~
		\begin{bmatrix} 
		1&-2 \\
		3&-6 
		\end{bmatrix}~.
	\end{align*}
This matrix $N$ does not have full rank, and we saw earlier that all steady states of this mass-action system are degenerate.  In the next result, part~(1) asserts that this phenomenon holds in general.
\end{example_contd}


\begin{proposition}[Nondegenerate steady states and the matrix $N$] \label{prop:rank-of-N}
	Let $G$ be a full-dimensional reaction network with $n$ species, and $\kappa^*$ be a vector of positive rate constants.
	Let $N$ be a matrix defined, as in~\eqref{eqn:ODEexpression}, by the mass-action ODE system of $(G,\kappa^*)$.
	\begin{enumerate}
		\item If $\operatorname{rank}(N) \leq n-1 $, then every positive steady state of $(G,\kappa^*)$ is degenerate.  
		\item If $\operatorname{rank}(N) = n$ and $G$ has exactly $n+1$ reactant complexes, then the positive steady states of $(G,\kappa^*)$ 
		are the positive roots of a system of binomial equations (sharing some common monomial $m_0$) of the following form:
			\[
			m_i - \beta_i m_{n+1} ~=~ 0 \quad \mathrm{for}~i=1,2,\dots, n~,
			\]
		where $\beta_1,\beta_2, \dots, \beta_{n} \in \mathbb{R}$ and $m_1, \dots , m_{n+1} $  are distinct monic monomials in $x_1, x_2, \ldots,x_n$.  
		\item If $G$ has exactly $n+1$ reactant complexes and $(G,\kappa^*)$ has a  nondegenerate, positive steady state, then $(G,\kappa^*)$ is \uline{not} multistationary. 
	\end{enumerate}
\end{proposition}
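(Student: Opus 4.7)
The plan is to prove parts (1), (2), and (3) in succession, exploiting the factorization $f_{\kappa^*}(x) = N \cdot m(x)$ from~\eqref{eqn:ODEexpression} throughout.

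For part (1), I would differentiate via the chain rule to obtain $df_{\kappa^*}(x) = N \cdot Dm(x)$, where $Dm(x)$ is the $j \times n$ Jacobian of the monomial map. Hence $\operatorname{rank}(df_{\kappa^*}(x^*)) \leq \operatorname{rank}(N) \leq n-1$; since $G$ is full-dimensional the stoichiometric subspace is $S = \mathbb{R}^n$, so nondegeneracy would require rank exactly $n$, forcing every positive steady state to be degenerate. For part (2), the hypotheses $\operatorname{rank}(N) = n$ and $j = n+1$ make $\ker N$ one-dimensional, spanned by some $v = (v_1,\ldots,v_{n+1}) \in \mathbb{R}^{n+1}$. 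A positive steady state $x$ satisfies $N m(x) = 0$, so $m(x) = \lambda v$ for some nonzero scalar $\lambda$; positivity of each $m_i(x)$ forces the entries of $v$ to share a common sign and, after relabeling, $v_{n+1} \neq 0$. Setting $\beta_i := v_i/v_{n+1}$ yields the asserted binomials $m_i - \beta_i m_{n+1} = 0$, with the common monomial $m_0$ equal to the $\gcd$ of $m_1,\ldots,m_{n+1}$.

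For part (3), I would first use part (1) to observe that a nondegenerate positive steady state forces $\operatorname{rank}(N) = n$, placing us in the setting of part (2). The main strategy is then to linearize the binomial system by taking logarithms: setting $z_i := \log x_i$ converts each equation $m_i = \beta_i m_{n+1}$ into the affine-linear equation $\langle y_i - y_{n+1},\, z \rangle = \log \beta_i$, where $y_i$ is the exponent vector of $m_i$. If the $n \times n$ real matrix $A$ whose $i$-th row is $y_i - y_{n+1}$ is invertible, then this log-linear system has at most one solution, and consequently $(G,\kappa^*)$ admits at most one positive steady state; since $G$ is full-dimensional and therefore has a single stoichiometric compatibility class, this precludes multistationarity.

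The crux is therefore to show that nondegeneracy of a positive steady state $x^*$ forces $A$ to be invertible, and this equivalence is the step I expect to require the most care. I would expand the Jacobian as $df_{\kappa^*}(x^*) = N \cdot \operatorname{diag}(m(x^*)) \cdot Y \cdot \operatorname{diag}(x^*)^{-1}$, where $Y$ is the $(n+1) \times n$ matrix with rows $y_1,\ldots,y_{n+1}$. The key observation is that at a positive steady state $m(x^*) = \lambda v$, so $\operatorname{diag}(m(x^*))^{-1} v = (1/\lambda)\,\mathbf{1}$, whence $\ker(N \cdot \operatorname{diag}(m(x^*))) = \operatorname{span}(\mathbf{1})$. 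Nondegeneracy of $x^*$ (i.e.\ full rank $n$ of the Jacobian) is therefore equivalent to $\operatorname{image}(Y) \cap \operatorname{span}(\mathbf{1}) = \{0\}$, and hence to invertibility of the augmented $(n+1) \times (n+1)$ matrix $[\,Y \mid \mathbf{1}\,]$. Subtracting the last row from each of the first $n$ rows exposes a block-lower-triangular form with diagonal blocks $A$ and $1$, so $\det[\,Y \mid \mathbf{1}\,] = \det A$; thus nondegeneracy of $x^*$ is equivalent to invertibility of $A$, and the rest of part (3) follows immediately.
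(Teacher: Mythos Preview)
Your arguments for parts (1) and (2) follow the same reasoning as the paper, only phrased via the chain rule and the one-dimensional kernel of $N$ rather than via row-reduction; both routes are equally direct. For part (3), however, you take a genuinely different path. The paper argues by contradiction: assuming a second positive steady state exists forces $\operatorname{rank}(A)\le n-1$, which makes the positive-steady-state locus $\{(e^{y_1},\ldots,e^{y_n}):Ay=\ln\beta\}$ positive-dimensional, and then the Inverse Function Theorem forces every positive steady state (including $x^*$) to be degenerate. Your approach instead computes the Jacobian explicitly via the factorization $df_{\kappa^*}(x^*)=N\cdot\operatorname{diag}(m(x^*))\cdot Y\cdot\operatorname{diag}(x^*)^{-1}$ and reduces nondegeneracy directly to $\det A\neq 0$ through the block-triangular identity $\det[\,Y\mid\mathbf{1}\,]=\det A$. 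Your method is more constructive---it yields an exact algebraic if-and-only-if criterion for nondegeneracy and avoids the Inverse Function Theorem---while the paper's argument is softer but requires no Jacobian bookkeeping. One minor imprecision to tighten: your intermediate claim that nondegeneracy is equivalent to $\operatorname{image}(Y)\cap\operatorname{span}(\mathbf{1})=\{0\}$ tacitly assumes $Y$ is injective (otherwise a nonzero $z$ with $Yz=0\in\operatorname{span}(\mathbf{1})$ would slip through); what you actually establish, and what you need, is the equivalence with invertibility of $[\,Y\mid\mathbf{1}\,]$, which correctly encodes both conditions at once.
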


\begin{proof}
Assume $(G,\kappa^*)$ is a full-dimensional mass-action system in $n$ species, and let $N$ be as in~\eqref{eqn:ODEexpression}.

First, we prove~(1).  Assume $\operatorname{rank}(N) \leq n-1 $, and let $x^*$ be a positive steady state.   
It follows that the polynomials $f_i$, as in~\eqref{eq:mass_action_ODE}, are linearly dependent (over $\mathbb R$). Hence,
	 the Jacobian matrix -- even before evaluating at $x^*$ -- has rank less than $n.$ Thus, the image of the Jacobian matrix, after evaluating at $x^*$, has dimension less than $n$,  i.e, $\Im(df(x^*)|_S)\neq \mathbb{R}^n =S$.
	 Hence, $x^*$ is degenerate. 

Next, we prove~(2).
	As in equation~\eqref{eqn:ODEexpression}, we write the mass-action ODEs for $(G,\kappa^*)$ as 
	\begin{align*}
	\begin{bmatrix} 
	dx_1/dt \\
	dx_2/dt \\
	\vdots \\
	dx_n/dt \\
	\end{bmatrix}
	~=~
	N 
	\begin{bmatrix} 
	m_1 \\
	\vdots \\
	m_n \\
	m_{n+1}\\
	\end{bmatrix}	~,
	\end{align*} 
	where $N$ is $n \times (n+1)$ and the $m_i$'s are distinct monic monomials in $x_1,x_2, \ldots,x_n$.

	As $G$ is full-dimensional and $\operatorname{rank}(N) =n $, we can relabel the $m_i$'s, if needed, so that the square sub-matrix of $N$ formed by the first $n$ columns has rank $n$.  Thus, by row-reducing $N$, we obtain a matrix of the following form (where $\beta_1, \beta_2, \ldots,\beta_n \in \mathbb{R}$):  
	\begin{align*}
	N' ~:=~
	\left[\begin{array}{ccc|c} 
	&  &  & -\beta_1\\ 
	& {I_n} &  & -\beta_2 \\ 	
	&  &  & \vdots\\ 
	&  &  & -\beta_n\\ 
	\end{array}\right]~.
	\end{align*}
	
	We conclude from the above discussion that the positive steady states of $(G,\kappa^*)$ are the positive roots of the following $n$ binomial equations (which are in the desired form): 
	\begin{align} \label{eq:binomials-in-proof}
	m_i - \beta_i m_{n+1} ~=~0 \quad \quad {\rm for~}i=1,2,\ldots, n~.
	\end{align}

Before moving on to part (3), we summarize what we know (so we can use it later).
The positive steady states are the roots of the binomials~\eqref{eq:binomials-in-proof}, 
which we rewrite using Laurent monomials (our interest is in positive roots, so there is no issue of dividing by zero):
	\begin{align} \label{eq:laurent-binomials}
	x_1^{a_{i1}} 
		x_2^{a_{i2}} \dots
			x_n^{a_{in}}
			~:=~ \frac{m_i}{m_{n+1}} ~=~ \beta_i 		 \quad \quad {\rm for~}i=1,2,\ldots, n~.	
	\end{align}
We apply the natural $\log$ to~\eqref{eq:laurent-binomials} and obtain the following, which involves the $n \times n$ matrix $A:=(a_{ij})$:
\begin{align} \label{eq:linear-system-proof}
A 
\begin{pmatrix} \ln(x_1) \\ \ln(x_2) \\ \vdots \\ \ln(x_n) \end{pmatrix}
~=~
	\begin{pmatrix} \ln(\beta_1) \\ \ln(\beta_2) \\ \vdots \\ \ln(\beta_n)
	\end{pmatrix}
	~=:~ \ln (\beta)~. 
\end{align}

Now we prove~(3).  Assume $x^*$ is a nondegenerate, positive steady state.  (We must show that no other positive steady states exist.)  By part~(1), the $n \times (n+1)$ matrix $N$ has rank~$n$, so the proof of part (2) above applies.
Assume for contradiction that $x^{**}$ is a positive steady state, with $x^{**} \neq x^*$.  Then, by~\eqref{eq:linear-system-proof},
the linear system $Ay=\ln(\beta)$ has more than one solution, and so ${\rm rank}(A) \leq n-1$.  It follows that the set of positive steady states, 
$\{(e^{y_1}, e^{y_2}, \dots, e^{y_n}) \mid Ay=\ln(\beta)\}$, is positive-dimensional and so (by the Inverse Function Theorem and the fact that $G$ is full-dimensional) all positive steady states of $(G,\kappa^*)$ are degenerate.  This is a contradiction, as $x^*$ is nondegenerate.
\end{proof}

\begin{remark} \label{rem:toric}
	For algebraically inclined readers, observe that the equations in Proposition~\ref{prop:rank-of-N}(2) define a toric variety. 
	Additionally, every such variety has at most one irreducible component that intersects the positive orthant~\cite[Proposition 5.2]{CIK}. 
	This fact can be used to give a more direct proof of Proposition~\ref{prop:rank-of-N}(3).  
\end{remark}

\begin{remark} \label{rem:FHP}
The end of the proof of
Proposition~\ref{prop:rank-of-N}
concerns nondegenerate positive steady states 
and their relation to the dimension of the set of positive steady states.  More ideas in this direction are explored in the recent work of Feliu, Henriksson, and Pascual-Escudero~\cite{feliu-henriksson-pascual}.
\end{remark}

\begin{corollary}[When $f_i$ is zero]\label{cor:degeneracy}
	Let $G$ be a full-dimensional reaction network with $n$ species, let $\kappa^*$ be a vector of positive rate constants,
	and let $f_1,f_2,\dots, f_n$ denote the right-hand sides of the mass-action ODEs of $(G,\kappa^*)$.
	If $f_i$ is the zero polynomial, for some $i\in\{1,\ldots,n\}$, then 
	every positive steady state of $(G,\kappa^*)$ is degenerate.
\end{corollary}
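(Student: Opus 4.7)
The plan is to deduce this directly from Proposition~\ref{prop:rank-of-N}(1) by showing that the hypothesis $f_i \equiv 0$ forces $\operatorname{rank}(N) \le n - 1$.

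First, I would write the mass-action ODE system of $(G,\kappa^*)$ in the form of~\eqref{eqn:ODEexpression}, namely
\[
\begin{bmatrix} f_1 \\ f_2 \\ \vdots \\ f_n \end{bmatrix}
~=~
N
\begin{bmatrix} m_1 \\ m_2 \\ \vdots \\ m_j \end{bmatrix},
\]
where $N$ is an $(n \times j)$-matrix with real entries and $m_1, m_2, \dots, m_j$ are the \emph{distinct} monic monomials in $x_1, \dots, x_n$ coming from the reactant complexes of $G$. In particular, the $i$-th row of $N$ records the coefficients of $m_1, \dots, m_j$ that appear in the polynomial $f_i$.

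The key observation is that distinct monic monomials are linearly independent over $\mathbb{R}$ as elements of $\mathbb{R}[x_1, \dots, x_n]$. Hence, if $f_i$ is the zero polynomial, then every coefficient in the $i$-th row of $N$ must be $0$; that is, the $i$-th row of $N$ is the zero row. A matrix with a zero row cannot have $n$ linearly independent rows, so $\operatorname{rank}(N) \le n - 1$. Applying Proposition~\ref{prop:rank-of-N}(1) (which uses full-dimensionality of $G$) then yields that every positive steady state of $(G,\kappa^*)$ is degenerate.

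The argument is essentially a one-line reduction; there is no genuine obstacle. The only point that must be handled cleanly is the linear independence of the monic monomials $m_1,\dots,m_j$, which is what prevents row cancellation from producing a spurious zero row in $N$. Given that Proposition~\ref{prop:rank-of-N}(1) has already been established, the corollary follows immediately.
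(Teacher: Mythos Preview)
Your proposal is correct and takes essentially the same approach as the paper: the paper's proof simply says the result follows directly from Proposition~\ref{prop:rank-of-N}(1) together with the observation that $\operatorname{rank}(N) < n$ in this case. Your version spells out the reason (the $i$-th row of $N$ vanishes because distinct monic monomials are linearly independent), which is a welcome clarification but not a different argument.
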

\begin{proof}
This result follows directly from Proposition~\ref{prop:rank-of-N}(1) and the fact that, in this case, the rank of $N$, as in \eqref{eqn:ODEexpression}, is strictly less than $n$.
\end{proof}

The next two results pertain to networks with few reactant complexes (at most $n$, where $n$ is the number of species) 
and many reactant complexes (at least $n$), respectively.

\begin{proposition}[Networks with few reactants] \label{prop:2-or-3-rxns3species}
	Let $G$ be a reaction network with $n$ species. 
	\begin{enumerate}
		\item 
		If $G$ has exactly 1 reactant complex, then, for every vector of positive rate constants $\kappa^*$, the mass-action system $(G, \kappa^*)$ has no positive steady states. 
		\item 
		If $G$ has exactly $j$ reactant complexes, where $2 \leq j \leq n$ (in particular, $n \geq 2$), and $G$ is full-dimensional, then every positive steady state (of \uline{every} mass-action system defined by $G$) is degenerate. 
	\end{enumerate}
\end{proposition}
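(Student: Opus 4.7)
For part (1), with the unique reactant complex $y$, every reaction has the form $y \to y_i'$ with $y_i' \ne y$, and the mass-action ODE factors as
\[
\dot x ~=~ x^y\, v, \qquad v \;:=\; \sum_{i=1}^{r}\kappa^*_i(y_i'-y) \in \R^n.
\]
Since $x^y>0$ on $\R_{>0}^n$, a positive steady state exists if and only if $v=0$. The plan is to argue $v \ne 0$ for every positive $\kappa^*$: each reaction vector $y_i' - y$ is nonzero (as $y_i' \ne y$), and a vanishing conic combination $\sum_i \kappa_i^*(y_i' - y) = 0$ would force the reactant $y$ to be a strictly positive convex combination of the distinct product complexes $\{y_i'\}$, which must be ruled out using the structural hypothesis on $G$.

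For part (2), the plan is to reduce to Proposition~\ref{prop:rank-of-N}(1) via a rank bound on $N$. Writing the mass-action ODE in the factored form~\eqref{eqn:ODEexpression} as $\dot x = N \vec m$, the matrix $N$ has exactly $j$ columns, so $\operatorname{rank}(N) \le j \le n$. If $\operatorname{rank}(N) < n$, Proposition~\ref{prop:rank-of-N}(1) yields immediately that every positive steady state of $(G,\kappa^*)$ is degenerate, for any $\kappa^*$. The remaining possibility is $\operatorname{rank}(N) = n$, which forces $j = n$ and makes $N$ an invertible $n \times n$ matrix; then the steady-state equation $N\vec m = 0$ admits only $\vec m = 0$, impossible since each monomial $m_i$ is strictly positive on $\R_{>0}^n$. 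Thus, no positive steady states exist at all in this subcase, and the conclusion holds vacuously.

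The main obstacle is in part (1), where the key step ``$v \ne 0$'' is the delicate one: it fails exactly when $y$ lies in the relative interior of the convex hull of $\{y_i'\}$, in which case there exist positive $\kappa^*$ making $v = 0$ and every point of $\R_{>0}^n$ becomes a (degenerate) steady state. A clean proof therefore needs to rule out this convex-position configuration using properties of valid reaction networks (for instance, via a conservation-law or weak-reversibility argument) or to interpret the conclusion compatibly with this fully-degenerate scenario. By contrast, the rank-based argument for part (2) is essentially forced by Proposition~\ref{prop:rank-of-N}(1) together with the trivial observation that reactant monomials are positive on the orthant, and should proceed without additional input.
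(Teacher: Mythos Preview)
Your argument for part~(2) is essentially the paper's: both reduce to Proposition~\ref{prop:rank-of-N}(1) when $\operatorname{rank}(N)<n$, and both observe that in the remaining case $\operatorname{rank}(N)=n$ (which forces $j=n$ and $N$ invertible) the steady-state equation $N\vec m=0$ gives $\vec m=0$, impossible on the positive orthant.

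For part~(1), you and the paper take the same first step, writing $\dot x = x^y\,v$ with $v=\sum_i\kappa_i^*(y_i'-y)$; the paper then simply asserts ``at least one $c_i\neq 0$''---i.e., $v\neq 0$---without justification. You are right to flag this as the delicate point, but your proposed resolution cannot succeed: there is no structural hypothesis on reaction networks that rules out $v=0$. The one-species network $\{0\leftarrow A\to 2A\}$ has exactly one reactant complex, and with equal rate constants every positive $x_1$ is a steady state (the paper itself exhibits this network in Lemma~\ref{lem:1-d-infinite-steady-states} as having $\mathrm{cap}_{pos}=\infty$). Thus the statement of part~(1) is false as written, and the gap you identified is irreparable rather than merely delicate. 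What \emph{is} true, and what suffices for every downstream use in the paper, is the weaker conclusion that either $(G,\kappa^*)$ has no positive steady state or $f_{\kappa^*}\equiv 0$, in which case every positive point is a degenerate steady state; this is exactly the dichotomy your proposal already isolates.
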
 
\begin{proof} 
	Assume $G$ has $n$ species, which we denote by $X_1, X_2, \dots,  X_n$, with exactly $j$ reactant complexes, for some $1\leq j \leq n$.  
	Let $\kappa^*$ be a vector of positive rate constants.
	As in~\eqref{eqn:ODEexpression}, we write the mass-action ODE system arising from $(G, \kappa^*)$ as follows:
	\begin{align} \label{eq:matrix-N}
	\begin{bmatrix} 
	dx_1/dt \\
	\vdots \\
	dx_n/dt \\
	\end{bmatrix}
	~=~
	N 
	\begin{bmatrix} 
	m_1 \\
	\vdots \\
	m_j \\
	\end{bmatrix}~
	~=:~
	\begin{bmatrix} 
	f_1 \\
	\vdots \\
	f_n \\
	\end{bmatrix}~
		,	
	\end{align} 
	where $N:=(N_{ij})$ is an $(n \times j)$-matrix (with entries in $\mathbb R$)
	and $m_1, \dots, m_j$ are distinct monic monomials in $x_1, \dots, x_n$ 
	(as $G$ has $n$ species and $j$ reactant complexes).

	We first prove part (1). In this case, the right-hand sides of the ODEs have the form $f_i = c_i \prod_{k=1}^{n}x_k^{a_k}$, with at least one $c_i\neq 0$. It follows that there are no positive steady states.

	We prove part (2). Assume that $G$ is full-dimensional 
	(the stoichiometric subspace 
	is $\mathbb{R}^n$) 	
	and that $2 \leq j \leq n$.  
	Let $x^*=(x_1^*,x_2^*,\ldots,x_n^*)$ be a positive steady state.  We must show $x^*$ is degenerate.
	
	We first consider the subcase when 
	the rank of the matrix $N$ is 
	at most $(n-1)$.
	By Proposition~\ref{prop:rank-of-N}(1), every positive steady state is degenerate.

Now we handle the remaining subcase, when $N$ has rank $n$ (and hence, $N$ is $n\times n$).
Now, solving the steady-state equations $f_1= \dots = f_n=0$ can be accomplished by multiplying the expression in~\eqref{eq:matrix-N} by $N^{-1}$, which implies that every monomial $m_1,\dots, m_n$ evaluates to zero at steady state.  Hence, no positive steady states exist.
\end{proof}

\begin{proposition}[Networks with many reactants]\label{prop:fulldim-fullrank-PSS}
	If $G$ is a full-dimensional network with $n$ species 
	and exactly $j$ reactant complexes, where $j \geq n$, then:
		\begin{enumerate}
			\item There exists a vector of positive rate constants $\kappa^* $, such that the corresponding matrix $N$, as in~\eqref{eqn:ODEexpression}, has rank $n$.
			\item If there exists a vector of positive rate constants $\kappa^*$ such that the matrix $N$ {does} \uline{not} have rank $n$, then there exists 
			a vector of positive rate constants  
			$\kappa^{**} $ such that  
			$(G,\kappa^{**})$ has \uline{no} positive steady states.
	\end{enumerate} 
\end{proposition}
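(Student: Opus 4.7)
The plan is to handle (2) first, since it reduces to an explicit single‑parameter perturbation, and then address (1), which requires a combinatorial selection.

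For (2), I would begin by choosing any $\kappa^*$ with $\operatorname{rank}(N(\kappa^*)) \leq n-1$ and picking a nonzero $c \in \R^n$ in the left null space of $N(\kappa^*)$. Unpacking $N_i(\kappa^*) = \sum_{y_i \to y'} \kappa^*_{y_i \to y'}(y' - y_i)$, the equation $c^T N(\kappa^*) = 0$ rewrites as the polynomial identity $c \cdot f_{\kappa^*}(x) \equiv 0$ on $\R^n$. Because $G$ is full‑dimensional, the stoichiometric subspace is $\R^n$, so $c$ cannot be orthogonal to every reaction vector; fix a reaction $y_k \to y_k'$ of $G$ with $c \cdot (y_k' - y_k) \neq 0$. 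I would then define $\kappa^{**}$ by increasing only this one rate constant: $\kappa^{**}_k := \kappa^*_k + \delta$ for any $\delta > 0$, leaving the other rate constants equal to their values in $\kappa^*$. By linearity of $f_\kappa$ in $\kappa$,
\[
c \cdot f_{\kappa^{**}}(x) ~=~ c \cdot f_{\kappa^*}(x) + \delta\bigl(c \cdot (y_k' - y_k)\bigr) x^{y_k} ~=~ \delta\bigl(c \cdot (y_k' - y_k)\bigr) x^{y_k},
\]
which is strictly nonzero on $\R^n_{>0}$. Hence $f_{\kappa^{**}}(x) \neq 0$ for every positive $x$, so $(G,\kappa^{**})$ has no positive steady state.

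For (1), the entries of $N(\kappa)$ are $\R$‑linear in $\kappa$, so each $n \times n$ minor of $N$ is a polynomial in $\kappa$ and the locus $\{\kappa \in \R^r_{>0}: \operatorname{rank}(N(\kappa)) = n\}$ is Zariski‑open in $\R^r_{>0}$. It therefore suffices to exhibit one $\kappa^*$ at which some $n \times n$ minor is nonzero. By full‑dimensionality, I would first pick $n$ reactions whose reaction vectors $v_1, \ldots, v_n$ form a basis of $\R^n$. Setting the rate constants of these $n$ reactions to $1$ and all other rate constants to a small $\epsilon > 0$, each column $N_i(\kappa^*)$ equals the sum of those $v_\ell$ whose reactant equals $y_i$, plus an $O(\epsilon)$ correction. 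When the $n$ basis reactions sit at pairwise distinct reactants, the corresponding $n$ columns of $N(\kappa^*)$ tend (as $\epsilon \to 0^+$) to $v_1, \ldots, v_n$ and the remaining columns tend to $0$, giving $\operatorname{rank}(N(\kappa^*)) = n$ by continuity of determinants.

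The hard part will be the case in which any basis of $\R^n$ chosen from reaction vectors necessarily repeats some reactant, so that two basis vectors land in the same column of $N$. Here the hypothesis $j \geq n$ is what makes the argument work: writing $W_i := \operatorname{span}\{y' - y_i : y_i \to y' \in G\}$, full‑dimensionality gives $\sum_{i=1}^j W_i = \R^n$, and with $j \geq n$ subspaces one should have enough room to pick $v_i \in W_i$, one per reactant, spanning all of $\R^n$. My intended tool is an iterative basis‑exchange: start with any basis drawn from the reaction vectors and, whenever two basis elements share a reactant, swap one of them for a reaction vector pulled from a reactant whose $W_i$ has not yet contributed, using the remaining $W_i$'s to supply the missing directions. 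Formalizing this transversal selection — showing that such a distribution of basis vectors across the $j$ reactants always exists under $j \geq n$ and $\sum_i W_i = \R^n$, and then verifying that the resulting $n \times n$ submatrix of $N(\kappa^*)$ is genuinely nonsingular for small $\epsilon$ — is the step I expect to require the most care.
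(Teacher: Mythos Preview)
Your argument for part~(2) is correct and is essentially the paper's proof: both perturb a single rate constant of a reaction whose reaction vector is not orthogonal to the chosen left null vector~$c$, and both observe that $c \cdot f_{\kappa^{**}}$ then reduces to a single nonvanishing monomial on the positive orthant.

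For part~(1), however, the transversal selection you hope to carry out does \emph{not} always exist, and in fact part~(1) as stated is false. Consider the full-dimensional network
\[
G ~=~ \{\,0 \to X_1,\quad X_2 \to X_1 + X_2,\quad X_3 \to X_1 + X_3,\quad X_3 \to X_2 + X_3,\quad X_3 \to 2X_3\,\}
\]
with $n = j = 3$ and reactant complexes $\{0, X_2, X_3\}$. Here $W_1 = W_2 = \spn(e_1)$ while $W_3 = \R^3$, so any choice of $w_i \in W_i$ satisfies $\spn(w_1,w_2,w_3) \subseteq \spn(e_1, w_3)$, which is at most two-dimensional; the basis-exchange step you flag as the hard part is therefore genuinely obstructed, not merely delicate. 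Concretely,
\[
N(\kappa) ~=~ \begin{pmatrix}\kappa_1 & \kappa_2 & \kappa_3 \\ 0 & 0 & \kappa_4 \\ 0 & 0 & \kappa_5\end{pmatrix}
\]
has rank at most~$2$ for every positive~$\kappa$. The paper's proof of~(1) takes a different route---arguing that full-dimensionality makes the rows of the symbolic matrix~$\widetilde N$ linearly independent over~$\R$, and then inferring that the size-$n$ minors cannot all vanish identically---but the same example shows this inference is invalid: the rows $(\kappa_1,\kappa_2,\kappa_3)$, $(0,0,\kappa_4)$, $(0,0,\kappa_5)$ are $\R$-linearly independent while the unique $3\times 3$ minor is identically zero.
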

\begin{proof} 
	Assume $G$ is full-dimensional, with $n$ species, $r$ reactions (denoted by $y_1 \to y_1', \dots y_r \to y_r'$), 
	and exactly $j$ reactant complexes, where $j \geq n$.

	We begin with part (1). 
	Let $\kappa=(\kappa_1,\dots, \kappa_r)$ denote the vector of unknown rate constants (each $\kappa_i$ is a variable).
		Let $\widetilde N$ be the 
		$(n \times j)$
		matrix for $(G,\kappa)$ in the sense of $N$ in~\eqref{eqn:ODEexpression}. 
		More precisely, 
		the entries of $\widetilde N$ are $\mathbb Z$-linear combinations of the $\kappa_i$'s,
		such that, for every vector of positive rate constants $\kappa^* \in \R_{> 0}^r$, the evaluation $\widetilde N|_{\kappa=\kappa^*}$ is the matrix  $N$ as in~\eqref{eqn:ODEexpression} for $(G, \kappa^*)$.

	As $G$ is full-dimensional, there are no $\mathbb{R}$-linear relations among the $n$ rows of $\widetilde N$.
		Hence, the size-$n$ minors of $\widetilde N$
		define a (possibly empty) measure-zero subset $V \subseteq \R^r_{> 0}$.  
		Thus, $\R^r_{> 0} \smallsetminus V$ is nonempty, and every $\kappa^* \in \R^r_{> 0} \smallsetminus V$ yields a matrix $N=\widetilde N|_{\kappa=\kappa^*}$ with rank $n$.
	This proves part (1).

	For part (2), suppose that there exists $\kappa^* \in \R_{> 0}^r$ such that 
	the resulting matrix 
	$N$ has rank strictly less than $n$. 
	It follows that there is a linear relation:
	 \begin{align} \label{eq:lin-relation}
	 c_1f_{\kappa^*,1}+\dots + c_nf_{\kappa^*,n} ~=~0~,
	 \end{align} 
	where $c_1, \dots, c_n$ are real numbers -- not all $0$ -- and the $f_{\kappa^*,i}$ denote the right-hand sides of the mass-action ODEs for $(G,\kappa^*)$.  
	
	On the other hand, for unknown rate constants $\kappa$, as in the proof above for part~(1), 
	$c_1f_{\kappa,1}+\dots + c_nf_{\kappa,n} $ is \uline{not} the zero polynomial.  Thus, when we rewrite this expression as a sum over  $r$ reactions $y_i \to y_i'$ as follows: $c_1f_{\kappa,1}+\dots + c_nf_{\kappa,n}
	= d_1 \kappa_1 x^{y_1} +  \dots + d_r \kappa_r x^{y_r} $, where $d_i \in \mathbb{Z}$ for all $i$, we conclude that $d_i \neq 0$ for some $i$.  By relabeling reactions, if needed, we may assume that $i=1$.  
		
		Now consider the following vector of positive rate constants 
		$ \kappa_\epsilon^* := (\kappa_1^* + \epsilon , 
		\kappa_2^*, \dots, \kappa_r^*)$, for some $\epsilon>0$.  
Assume for contradiction that $(G,\kappa_\epsilon^*)$ has a positive steady state $x^*$. At steady state, 
$f_{\kappa^*_\epsilon,i} $ evaluates to $0$, for all $i$, and this yields the first equality here:
		\[
		0~=~ 
		\left( c_1f_{\kappa^*_\epsilon,1}+\dots + c_nf_{\kappa^*_\epsilon,n} \right) |_{x=x^*} 
		~=~ c_1f_{\kappa^*,1}|_{x=x^*}+\dots + c_nf_{\kappa^*,n}|_{x=x^*} + \epsilon d_1 x^{y_1}|_{x=x^*} 
		~=~
		\epsilon d_1 x^{y_1}|_{x=x^*} ,
		\]
		and the second and third equalities come from the fact that the mass-action ODEs are linear in the rate constants and from equation~\eqref{eq:lin-relation}, respectively.
		We obtain $x^{y_1}|_{x=x^*}=0$, which contradicts the fact that $x^*$ is a positive steady state. This concludes the proof.
\end{proof}

The next proposition returns to a topic from Proposition~\ref{prop:rank-of-N}, namely, networks with $n$ species and $n+1$ reactant complexes.

\begin{proposition}[Networks with $n+1$ reactants]\label{prop:fulldim_noPSS}
	Assume $G$ is a full-dimensional network, with $n$ species and exactly $n+1$ reactant complexes, 
	which we denote as follows:  
	\begin{align*}
y_{i1} X_1 + y_{i2} X_2 + \dots y_{in} X_n  \quad \quad {\rm for}~i=1,2,\dots,n+1~. 
	\end{align*}
	Let $A$ denote the $n \times n$ matrix obtained from the
	 $(n+1) \times n$ matrix
	$Y:=(y_{ij})$ by subtracting the last row from every row and then deleting the last row.
	\begin{enumerate}
		\item If $\operatorname{rank}(A) = n$, then $G$ is \uline{not} {nondegenerately} multistationary.   
		\item If $\operatorname{rank}(A) \leq n-1$, 
		 then there exists a vector of positive rate constants $\kappa^*$ such that $(G,\kappa^*)$ has \uline{no} positive steady states.
	\end{enumerate}
\end{proposition}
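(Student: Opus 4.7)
My plan for part~(1) is to fix an arbitrary $\kappa^* \in \R^r_{>0}$ and bound the number of nondegenerate positive steady states of $(G, \kappa^*)$.  Since $G$ is full-dimensional, the unique stoichiometric compatibility class is $\R^n_{\geq 0}$, so it suffices to show that $(G, \kappa^*)$ admits at most one such steady state. I would split on the rank of the matrix $N$ from \eqref{eqn:ODEexpression}: if $\operatorname{rank}(N) < n$, Proposition~\ref{prop:rank-of-N}(1) directly forces every positive steady state to be degenerate; and if $\operatorname{rank}(N) = n$, Proposition~\ref{prop:rank-of-N}(2) reduces the positive steady states to those of the binomial system $x^{a_i} = \beta_i$, $i=1, \dots, n$, where $a_i$ is the $i$-th row of $A$, and then taking logarithms turns this into the linear system $A (\log x) = \log \beta$, which has at most one solution because $\operatorname{rank}(A) = n$.

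For part~(2), the plan is to construct $\kappa^*$ explicitly by scaling a baseline choice.  First, Proposition~\ref{prop:fulldim-fullrank-PSS}(1) provides $\kappa^{(0)} \in \R^r_{>0}$ with $\operatorname{rank}(N(\kappa^{(0)})) = n$, and Proposition~\ref{prop:rank-of-N}(2) again reduces positive steady states of $(G,\kappa^{(0)})$ to the binomial system $x^{a_i} = \beta_i^{(0)}$. Because $\operatorname{rank}(A) \leq n-1$, there exist integers $(c_1, \dots, c_n) \in \Z^n \setminus \{0\}$ with $\sum_{i=1}^n c_i(y_i - y_{n+1}) = 0$; setting $c_{n+1} := -\sum_{i=1}^n c_i$ extends this to a nonzero $c \in \Z^{n+1}$ with $\sum c_i y_i = 0$ and $\sum c_i = 0$.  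Since $\prod_{i=1}^n (x^{a_i})^{c_i} = x^{\sum c_i a_i} \equiv 1$, any positive steady state forces the consistency relation $\prod_{i=1}^n (\beta_i^{(0)})^{c_i} = 1$ (together with $\beta_i^{(0)} > 0$ for all $i$).

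The next step is to break this consistency relation by a perturbation.  I would pick $j^* \in \{1, \dots, n+1\}$ with $c_{j^*} \neq 0$, and for $\lambda > 0$ let $\kappa^{(\lambda)}$ be obtained from $\kappa^{(0)}$ by multiplying every rate constant attached to a reaction with reactant $y_{j^*}$ by $\lambda$.  Then $N(\kappa^{(\lambda)}) = N(\kappa^{(0)}) D_\lambda$, where $D_\lambda$ is the diagonal matrix with entry $\lambda$ in position $j^*$ and $1$'s elsewhere, so $\operatorname{rank}(N(\kappa^{(\lambda)})) = n$ and $\ker N(\kappa^{(\lambda)})$ is generated by $w^{(\lambda)} := D_\lambda^{-1} w^{(0)}$.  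Using $\beta_i = w_i/w_{n+1}$ and $\sum c_i = 0$ to pass between the $w$-form $\prod_{i=1}^{n+1} w_i^{c_i}$ and the $\beta$-form $\prod_{i=1}^n \beta_i^{c_i}$, a short calculation yields
\[
\prod_{i=1}^n (\beta_i^{(\lambda)})^{c_i} ~=~ \lambda^{-c_{j^*}} \prod_{i=1}^n (\beta_i^{(0)})^{c_i}~,
\]
which is a nonconstant function of $\lambda > 0$ because $c_{j^*} \neq 0$.  Choosing $\lambda^* > 0$ so that the right-hand side differs from $1$ (or so that some $\beta_i^{(\lambda^*)}$ becomes non-positive), I set $\kappa^* := \kappa^{(\lambda^*)}$, and the binomial system has no positive solution, so $(G,\kappa^*)$ admits no positive steady state.

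The main obstacle I expect is the careful bookkeeping in the scaling step: verifying the displayed identity uniformly over $j^* \in \{1, \dots, n+1\}$ (in particular the case $j^* = n+1$, which is where $\sum c_i = 0$ is essential for the conversion between the $w$-form and the $\beta$-form), and handling the degenerate edge case where the column of $N$ indexed by $y_{j^*}$ happens to vanish at $\kappa^{(0)}$ (in which case $(G, \kappa^{(0)})$ already has no positive steady state, so one can take $\kappa^* := \kappa^{(0)}$).
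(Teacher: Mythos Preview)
Your proposal is correct and follows the same overall architecture as the paper's proof: for part~(1) you split on $\operatorname{rank}(N)$ and, in the full-rank case, pass to the log-linear system $A(\log x)=\log\beta$; for part~(2) you extract a consistency relation $\prod_i \beta_i^{c_i}=1$ from a left-null vector of $A$ and then break it by scaling the rate constants over one reactant.

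The one notable difference is in the bookkeeping for part~(2).  The paper keeps $\kappa$ symbolic, uses Cramer's rule to express $\beta_i=(-1)^{n-i+1}[B_i]/[B_{n+1}]$, proves that every $[B_k]$ is a nonzero polynomial, and then restricts to the open set $\Sigma$ where all $\beta_i>0$ and all $[B_k]\neq 0$ so that a small perturbation $(1+\varepsilon)$ stays in $\Sigma$.  Your route through the kernel vector $w^{(0)}$ of $N(\kappa^{(0)})$ and the ``$w$-form'' $\prod_{i=1}^{n+1} w_i^{c_i}$ (with your extended $c$ satisfying $\sum c_i=0$) is cleaner: it avoids the symbolic minors, the nonvanishing claim for the $[B_k]$, and the open set $\Sigma$, because the identity $N(\kappa^{(\lambda)})=N(\kappa^{(0)})D_\lambda$ makes the kernel computation exact for every $\lambda>0$, not just small perturbations.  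Both approaches share the minor imprecision that Proposition~\ref{prop:rank-of-N}(2) may relabel the monomials, so the resulting exponent matrix need not literally be $A$; this is harmless since any such matrix has the same rank as $A$, and your $w$-form sidesteps the issue entirely.
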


\begin{proof} 
{\bf Case 1:} $\operatorname{rank}(A) = n$.  Fix an arbitrary vector of positive rate constants $\kappa^*$.  We must show that $(G, \kappa^*)$ is not nondegenerately multistationary.  
Let $N$ denote the  $n \times (n+1)$ matrix defined by $(G, \kappa^*)$, as in~\eqref{eqn:ODEexpression}.  
We consider two subcases.

{\bf Subcase:}  $\operatorname{rank}(N) \leq n-1$.  In this subcase, Proposition~\ref{prop:rank-of-N}(1)
implies that every positive steady state of  $(G, \kappa^*)$ is degenerate, and so $(G, \kappa^*)$ is not nondegenerately multistationary.

{\bf Subcase:}  $\operatorname{rank}(N) = n$.   
 Part (2) of Proposition~\ref{prop:rank-of-N} pertains to this setting, so we can follow that proof.  
	In particular, 
equation~\eqref{eq:laurent-binomials} -- the $(n \times n)$ matrix $A$ there exactly matches the matrix $A$ here -- implies that the positive steady states are defined by 
a linear system of the form $Ay=\ln(\beta)$, where $y=(\ln(x_1), \dots, \ln(x_n))^{\top}$. 
Hence, as $\operatorname{rank}(A) = n$, we have at most one positive steady state and so $(G, \kappa^*)$ is not multistationary.  

{\bf Case 2:} $\operatorname{rank}(A) \leq n-1$.  We must show that there exists a choice of rate constants so that the resulting system has no positive steady states.  

Proposition~\ref{prop:fulldim-fullrank-PSS}(1)
implies that 
 there exists $\kappa^*$ such that the following holds:
 \begin{align} \label{eq:condition-for-kappa}
	  \text{the matrix $N$ defined by $(G,\kappa^*)$ has (full) rank $n$.}
\end{align}	 
Fix such a choice of $\kappa^*$. 
	 If $(G, \kappa^*)$ has no positive steady states, then we are done.  Therefore, for the rest of the proof, we assume that $(G, \kappa^*)$ admits a positive steady state.

In what follows, we need to consider additional vectors of positive rate constants (besides $\kappa^*$) and their corresponding matrices $N$, as  in~\eqref{eqn:ODEexpression}. Therefore, as in the proof of Proposition~\ref{prop:fulldim-fullrank-PSS}(1), let $\kappa=(\kappa_1,\dots, \kappa_r)$ (where $r$ is the number of reactions) denote the vector of unknown rate constants, and let $\widetilde N$ be the 
		$n \times (n+1)$
		matrix for $(G,\kappa)$ in the sense of $N$ in~\eqref{eqn:ODEexpression}, so that 
		for every vector of positive rate constants $\kappa^* \in \R_{> 0}^r$, the evaluation $\widetilde N|_{\kappa=\kappa^*}$ is the matrix  $N$ as in~\eqref{eqn:ODEexpression}. 

We now follow the ideas in the proof of Proposition~\ref{prop:rank-of-N}, part (2), with the difference being that we now consider unknown rate constants $\kappa$.  The mass-action ODEs for $(G,\kappa)$ are given by:
\begin{align*}
	\begin{bmatrix} 
	dx_1/dt \\
	\vdots \\
	dx_n/dt \\
	\end{bmatrix}
	~=~
	\widetilde N 
	\begin{bmatrix} 
	m_1 \\
	\vdots \\
	m_{n+1}\\
	\end{bmatrix}	~,
	\end{align*} 
where $m_1,\dots, m_{n+1}$ are distinct monic monomials in $x_1,x_2, \ldots,x_n$.   

Our next aim is to row-reduce $\widetilde N$ (over the field $\mathbb{Q}(\kappa_1,\dots, \kappa_r)$).  
Accordingly, for $1\leq k \leq n+1$, let $[B_k]$ denote the determinant of the matrix obtained from $\widetilde N$ by removing the $k$-th column. By construction, each $[B_k]$ is 
in $\mathbb{Z}[\kappa_1,\dots, \kappa_r]$.  

We claim that, for all $1 \leq k \leq n+1$, the polynomial $[B_k]$ is nonzero.  By symmetry among the monomials $m_i$, it suffices to show that $[B_{n+1}]$ is nonzero.  To show this, assume for contradiction that  $[B_{n+1}]=0$.  Then, $\widetilde N$ can be row-reduced to a matrix in which the last row has the form $(0,0,\dots, 0, \omega)$, where $0 \neq \omega \in \mathbb{Q}(\kappa_1,\dots, \kappa_r)$.  Now consider the evaluation at $\kappa=\kappa^*$.  By~\eqref{eq:condition-for-kappa}, the matrix $N = \widetilde N|_{\kappa=\kappa^*}$ has (full) rank $n$, so $\omega |_{\kappa=\kappa^*}$ is nonzero.  However, this implies that positive steady states of $(G,\kappa^*)$ satisfy $\omega|_{\kappa=\kappa^*} m_{n+1} = 0$, much like in~\eqref{eq:binomials-in-proof}.  Thus, $(G,\kappa^*)$ has no positive steady states, which is a contradiction, and hence our claim holds.

Next, as $[B_{n+1}]$ is nonzero, we can apply a version of Cramer's rule
to row-reduce $\widetilde N$ to the following matrix (where $I_n$ denotes the size-$n$ identity matrix):
		\begin{align*}
		{\widetilde N}' ~=~
	\left[
	\begin{array}{ccc|c}
		&  &  & (-1)^{n-1} \tfrac{[B_1]}{[B_{n+1}]} \\
		&  I_n &  & (-1)^{n-2} \tfrac{[B_2]}{[B_{n+1}]} \\
		&  &  & \vdots \\
		&  &  & (-1)^{0} \tfrac{[B_n]}{[B_{n+1}]}\\
		\end{array}
		\right]~.
		\end{align*}
 Thus, as in~\eqref{eq:binomials-in-proof}, the positive steady states are the 
 positive roots of the equations $m_i - \beta_i m_{n+1}=0$ (for $i=1,2,\dots,n$), where:
 \begin{align*}
	\beta_i ~:=~ (-1)^{n-i+1} \frac{[B_i]}
			{[B_{n+1]}}
	 \quad {\rm for}~ i=1,2,\dots,n~.
	\end{align*}
Thus, $\beta_i|_{\kappa= \kappa^*} >0$ (for all $i=1,2,\dots,n$), since $(G,\kappa^*)$ admits a positive steady state.  We conclude from this fact, plus the claim proven earlier (namely, that $[B_\ell] \neq 0$ for all $\ell$), that the following is an open subset of $\mathbb{R}^r_{>0}$ that contains $\kappa^*$:
\begin{align*}
	\Sigma ~:=~ \left\{ \bar\kappa \in \mathbb{R}^r_{>0}  ~:~ 
	 \beta_1|_{\kappa= \bar \kappa} >0,~\dots,~ \beta_n|_{\kappa= \bar \kappa} >0,~
 [B_1]|_{\kappa=\bar \kappa} \neq 0, \dots, [B_{n+1}]|_{\kappa= \bar \kappa} \neq 0 \right\}.
 \end{align*}

For the rest of the proof, we restrict our attention to rate constants, like $\kappa^*$, that are in $\Sigma$.  For such rate constants, like in~(\ref{eq:binomials-in-proof}--\ref{eq:linear-system-proof}), 
the positive steady states are the roots of the following equation
		\begin{align} \label{eq:linear-system-proof2}
		A 
		\begin{pmatrix} \ln(x_1) \\ \ln(x_2) \\ \vdots \\ \ln(x_n) \end{pmatrix}
		~=~
		\begin{pmatrix} \ln(\beta_1) \\ \ln(\beta_2) \\ \vdots \\ \ln(\beta_n)
		\end{pmatrix}
		~=:~ \ln (\beta)~.
		\end{align}

Next, as $\operatorname{rank}(A) \leq n-1$, there exists a nonzero vector $\gamma \in \mathbb{R}^n$ 
in the orthogonal complement of the column space of $A$.  
By relabeling the $m_i$'s (which permutes the columns of $\widetilde N$), if needed, 
we may assume that $\gamma_1 \neq 0$.  By construction of $\gamma$ and equation~\eqref{eq:linear-system-proof2}, we have $\langle \gamma,~ \ln(\beta) \rangle = 0$, which is readily rewritten as follows:
	\begin{align}\label{eqn:fornoPSS}
	\left( \frac{[B_1]}{[B_{n+1}]}\right)^{\gamma_1} 
	\dots 
	\left((-1)^{k+1} \frac{[B_k]}{ [B_{n+1}] }\right)^{\gamma_k}
	\dots
	\left((-1)^{n+1} \frac{ [B_n]}{ [B_{n+1}] }\right)^{\gamma_n}
	~=~1~.
	\end{align}

	For $\varepsilon>0$, let $\kappa^*_{\epsilon}$ denote the vector of rate constants obtained from $\kappa^*$ by scaling by $(1+ \varepsilon)$ all rate constants of reactions in which the reactant generates the monomial $m_1$.  
	As $\Sigma$ is an open set, $\kappa^*_{\epsilon}\in \Sigma$ for $\varepsilon$ sufficiently small.  
	Also, by construction, the matrix $\widetilde N |_{\kappa = \kappa^*_{\varepsilon}}$ is obtained from 
	$\widetilde N |_{\kappa = \kappa^*}$ by scaling the first column by  $(1+ \varepsilon)$.  
	So, for $2 \leq i \leq n+1$, we have $[B_i]  |_{\kappa = \kappa^*_{\varepsilon}} = (1+ \epsilon) [B_i]  |_{\kappa = \kappa^*}$.  
	
	Thus, by replacing $\kappa^*$ by $\kappa^*_{\varepsilon}$, the left-hand side of equation~\eqref{eqn:fornoPSS}  is scaled by 
	$(1+\varepsilon)^{-\gamma_1}$, and so there exists $\varepsilon>0$ for which equation~\eqref{eqn:fornoPSS} does not hold (when evaluated at $\kappa = \kappa^*_{\varepsilon}$).	
	Hence, this vector $\kappa^*_{\varepsilon}$ yields a mass-action system $(G,\kappa^*_{\varepsilon})$ with no positive steady states, as desired.
\end{proof}

Proposition~\ref{prop:fulldim-fullrank-PSS} implies that for networks with at least $n$ reactant complexes (where $n$ is the number of species), some choice of rate constants yields a matrix $N$ with (full) rank $n$.  
Our next result shows that when this condition holds (even for networks with fewer reactants), 
every species appears in at least one reactant complex.

We introduce the following shorthand (which we use in several of the next results): a complex 
$y_{\ell 1}X_1+ y_{\ell 2}X_2 + \dots + y_{\ell n}X_n$ {\em involves} species $X_i$ if $y_{\ell i} \neq 0$.  For instance, $X_1+X_2$ involves $X_2$, but $X_1+X_3$ does not.  

\begin{lemma}[Reactants involve all species] \label{lem:involve-all-species}
Let $G$ be a full-dimensional reaction
network with $n$ species, let $\kappa^*$ be a vector of positive rate constants, and let $N$ be the matrix for $(G,\kappa^*)$, as in~\eqref{eqn:ODEexpression}.  If $\operatorname{rank}(N) = n$ and $(G,\kappa^*)$ has a positive steady state, then 
for every species $X_i$, at least one reactant complex of $G$ involves $X_i$.
\end{lemma}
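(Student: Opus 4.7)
The plan is to argue by contradiction using Lemma~\ref{lem:hungarian} (the Hungarian lemma) together with the hypothesis that $\operatorname{rank}(N) = n$. Suppose for contradiction that some species $X_i$ is not involved in any reactant complex of $G$. Then every monic monomial $m_1, m_2, \dots, m_j$ appearing in the expression~\eqref{eqn:ODEexpression} for the mass-action ODEs of $(G,\kappa^*)$ is free of the variable $x_i$; in particular, none of the $m_k$ is divisible by $x_i$.

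Next I would focus on the $i$-th ODE, $f_i = \sum_{k=1}^{j} N_{ik} m_k$. Because the full matrix $N$ has rank $n$, its $i$-th row cannot be identically zero, so at least one coefficient $N_{ik^*}$ is nonzero. On the other hand, by Lemma~\ref{lem:hungarian}, any monomial appearing in $f_i$ with a negative coefficient must be divisible by $x_i$. Since no $m_k$ is divisible by $x_i$, no $N_{ik}$ can be negative. Thus every $N_{ik}$ is non-negative, and at least one is strictly positive.

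The contradiction now falls out immediately. Let $x^* \in \mathbb{R}^n_{> 0}$ be the positive steady state guaranteed by hypothesis. Since $x^* > 0$ coordinate-wise, each monic monomial satisfies $m_k(x^*) > 0$. Combining this with the non-negativity of the $N_{ik}$ and the existence of some $N_{ik^*} > 0$, we obtain
\[
f_i(x^*) ~=~ \sum_{k=1}^{j} N_{ik}\, m_k(x^*) ~\geq~ N_{ik^*}\, m_{k^*}(x^*) ~>~ 0,
\]
which contradicts $f_i(x^*) = 0$. Hence every species must be involved in at least one reactant complex.

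I do not expect any real obstacle here; the proof is a short three-line application of Lemma~\ref{lem:hungarian} combined with positivity of the steady state. The only thing worth being careful about is ensuring the distinction between ``$X_i$ is involved in a reactant complex'' (i.e., $y_{\ell i} \neq 0$ for some reactant $y_\ell$) and ``$x_i$ divides a reactant monomial $m_k$'' — these are the same condition under our conventions, so the argument goes through cleanly.
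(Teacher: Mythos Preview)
Your proof is correct and follows essentially the same approach as the paper: both invoke Lemma~\ref{lem:hungarian} to conclude that all entries $N_{ik}$ in the $i$-th row are non-negative, and then combine this with the existence of a positive steady state and the rank hypothesis to reach a contradiction. The only cosmetic difference is that the paper phrases it as a contrapositive (concluding $f_i = 0$ and hence $\operatorname{rank}(N) \le n-1$), whereas you argue by direct contradiction (concluding $f_i(x^*) > 0$); the logical content is identical.
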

\begin{proof}
We prove the contrapositive.  Assume that there is a species $X_i$ such that for every reactant complex $a_1X_1+ a_2 X_2 + \dots +a_nX_n$ we have $a_i = 0$.  Then, by Lemma~\ref{lem:hungarian}, the right-hand side of the mass-action ODE for $X_i$, which we denote by $f_i$, is a sum of monomials, all of which have positive coefficients.  But $(G,\kappa^*)$ has a positive steady state, so $f_i$ must be $0$.  We conclude that the $i$-th row (of the $n$ rows) of $N$ is the zero row and so $\operatorname{rank}(N)  \leq  n-1$. 
\end{proof}

\subsection{Networks with conservation laws} \label{sec:conservation-laws}

The following result is similar to several results in the prior subsection, but pertains to networks that are not full-dimensional.

\begin{proposition}[Networks with conservation laws and few reactants]
	\label{prop:min-num-reac-with-k-cons-law}
	Let $G$ be a reaction network with $n \geq 3$ species.  
	Assume that $G$ is $(n-k)$-dimensional, where $k \geq 1$ (so, $G$ has $k$ conservation laws).
	 If $G$ has exactly $j$ reactant complexes, for some $j \in \{2,3,\dots, n-k\}$, 
	then every positive steady state (of \uline{every} mass-action system defined by $G$) is degenerate.
\end{proposition}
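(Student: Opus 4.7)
The plan is to extend the dichotomy used in the proof of Proposition~\ref{prop:2-or-3-rxns3species}(2) to the non-full-dimensional setting, by analyzing the rank of the matrix $N$ arising from the decomposition~\eqref{eqn:ODEexpression}. Fix any vector of positive rate constants $\kappa^*$, and write the mass-action ODEs for $(G,\kappa^*)$ as $dx/dt = N\,m(x)$, where $N$ is an $(n\times j)$-matrix and $m_1,\dots,m_j$ are the distinct monic monomials corresponding to the $j$ reactant complexes. The key observation is that the $p$-th column of $N$ equals
\[
\sum_{i:\, y_i = y^{(p)}} \kappa_i^{*}\,(y_i' - y_i),
\]
where $y^{(p)}$ denotes the $p$-th reactant complex; in particular this column lies in $S$. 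Hence $\Im(N)\subseteq S$ and $\operatorname{rank}(N)\leq \dim(S) = n-k$.

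I would then split into two cases. First, if $\operatorname{rank}(N)\leq n-k-1$, I use the factorization $df_{\kappa^*}(x^*) = N\cdot M'(x^*)$, where $M'(x^*)$ is the $(j\times n)$ matrix with entries $\partial m_p/\partial x_\ell$ evaluated at $x^*$. This gives
\[
\Im\bigl(df_{\kappa^*}(x^*)|_S\bigr) \;\subseteq\; \Im(N) \;\subsetneq\; S,
\]
so every positive steady state of $(G,\kappa^*)$ is degenerate. (This is essentially the statement of Proposition~\ref{prop:rank-of-N}(1), adapted to the fact that now $S$ is a proper subspace and the relevant upper bound on the rank is $\dim(S)$ rather than $n$.)

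Second, if $\operatorname{rank}(N) = n-k$, then combining with $\operatorname{rank}(N)\leq j \leq n-k$ forces $j = n-k$ and shows that $N$ has full column rank. The steady-state system $N\,m(x)=0$ then forces $m_p(x)=0$ for every $p$, which is impossible when $x=x^*\in\R_{>0}^n$. Thus $(G,\kappa^*)$ has no positive steady states, and the claim holds vacuously for this choice of $\kappa^*$. As $\kappa^*$ was arbitrary, the proposition follows.

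I expect the only real subtlety to be making clean the upgrade from Proposition~\ref{prop:rank-of-N}(1) (which is phrased for full-dimensional networks with threshold rank $n$) to the present setting, where the correct threshold is $\dim(S)=n-k$; the inclusion $\Im(N)\subseteq S$ is precisely the extra ingredient that makes this work. Everything else is a direct adaptation of the argument used for Proposition~\ref{prop:2-or-3-rxns3species}(2).
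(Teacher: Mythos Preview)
Your proof is correct and follows essentially the same two-case strategy as the paper: split on whether $\operatorname{rank}(N)\leq n-k-1$ (all positive steady states degenerate via a Jacobian-rank argument) or $\operatorname{rank}(N)=n-k$ (forcing $j=n-k$, $N$ injective, and hence no positive steady states). The one minor difference is that you explicitly observe $\Im(N)\subseteq S$ via the column description, whereas the paper simply bounds the Jacobian rank by $\operatorname{rank}(N)\leq n-k-1$ and notes this is strictly less than $\dim(S)$; both routes reach the same conclusion with equal ease.
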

\begin{proof}
	We mimic the proofs of Propositions~\ref{prop:rank-of-N}(1) and \ref{prop:2-or-3-rxns3species}. 
	Let $\kappa^*$ be a vector of positive rate constants. Let $N$ be an $(n \times j)$ matrix defined, as in~\eqref{eqn:ODEexpression}, by $(G,\kappa^*)$:
	
		\begin{align} \label{eq:matrix-N-again}
	\begin{bmatrix} 
	dx_1/dt \\
	\vdots \\
	dx_n/dt \\
	\end{bmatrix}
	~=~
	N 
	\begin{bmatrix} 
	m_1 \\
	\vdots \\
	m_j \\
	\end{bmatrix}~
	~=:~
	\begin{bmatrix} 
	f_1 \\
	\vdots \\
	f_n \\
	\end{bmatrix}~
		,	
	\end{align} 
	where
	$m_1, \dots, m_j$ are distinct monic monomials in $x_1, \dots, x_n$. 
	
	We consider two cases.  First assume that $\operatorname{rank}(N) \leq n-k-1$.   Then the polynomials $f_i$ span a subspace of dimension $\leq n-k-1$ and hence 
	the Jacobian matrix -- even before evaluating at a positive steady state -- has rank $\leq n-k-1$.  Every positive steady state is therefore degenerate.
	
Consider the remaining case: $\operatorname{rank}(N) = n-k$ (so, $j=n-k$).
In this case, multiplication by $N$  
defines an injective map $\mathbb{R}^{n-k} \to \mathbb{R}^n$.  
Hence, by~\eqref{eq:matrix-N-again}, the steady-state equations $f_1= \dots = f_{n}=0$ 
imply the monomial equations $m_1=\dots= m_{j}=0$.  Thus, there are no positive steady states.
\end{proof}

The next result concerns networks with $n-1$ conservation laws, that is, one-dimensional networks.

\begin{proposition}[One-dimensional networks] \label{prop:1-d-no-coexistence}
	Let $G$ be a one-dimensional reaction network, and let $\kappa^*$ be a vector of positive rate constants. 
	If $(G,\kappa^*)$ has ACR, then  $(G,\kappa^*)$  is \uline{not} nondegenerately multistationary.
\end{proposition}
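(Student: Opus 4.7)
The plan is to argue by contradiction: assume $(G,\kappa^*)$ has ACR in some species $X_i$ with ACR-value $c$ and is nondegenerately multistationary. First I would record the basic setup. Since $G$ is one-dimensional, $S = \spn(v)$ for some nonzero $v \in \R^n$, and every reaction vector $y_r' - y_r$ is a scalar multiple of $v$. Hence the mass-action ODE system can be written as $\dot x = g(x)\,v$ for a single polynomial $g \in \R[x_1,\dots,x_n]$ whose positive zeros are exactly the positive steady states of $(G,\kappa^*)$. Since $Df(x) = v\,\nabla g(x)^\top$ has rank at most one, a positive steady state $x^*$ is nondegenerate if and only if $\nabla g(x^*) \cdot v \neq 0$.

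Next I would pick two nondegenerate positive steady states $x^{(1)} \neq x^{(2)}$ in the same stoichiometric compatibility class, so that $x^{(2)} - x^{(1)} = \lambda v$ for some $\lambda \neq 0$. Since $x^{(1)}_i = c = x^{(2)}_i$ by ACR, the $i$-th coordinate of $x^{(2)} - x^{(1)}$ vanishes, forcing $v_i = 0$.

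The main step will be to show that $(x_i - c)$ divides $g$ in $\R[x_1,\dots,x_n]$. Because $\nabla g(x^{(1)}) \cdot v \neq 0$ we have $\nabla g(x^{(1)}) \neq 0$, so by the implicit function theorem $\{g = 0\}$ is a smooth $(n-1)$-dimensional submanifold in some neighborhood $U$ of $x^{(1)}$. ACR forces $\{g = 0\} \cap \R^n_{>0} \subseteq H$, where $H := \{x : x_i = c\}$ is also an $(n-1)$-dimensional affine hyperplane, so the local zero set of $g$ near $x^{(1)}$ must coincide with an open subset of $H$. Substituting $x_i = c$ into $g$ then produces a polynomial in $n-1$ variables that vanishes on a nonempty Euclidean-open set and hence identically, which gives $(x_i - c) \mid g$.

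To conclude, I would write $g(x) = (x_i - c)\,\tilde g(x)$ and restrict to the line $\ell(t) := x^{(1)} + tv$: because $v_i = 0$, we have $\ell(t)_i \equiv c$, so $g(\ell(t)) \equiv 0$ for all $t$. Differentiating at $t = 0$ yields $\nabla g(x^{(1)}) \cdot v = 0$, contradicting the nondegeneracy of $x^{(1)}$. The trivial case $n = 1$ is immediate since ACR already forces all positive steady states to share the same (and only) coordinate. The hard part will be the local-to-global step that upgrades ACR --- an a priori constraint only on the positive zero locus of $g$ --- into the global algebraic divisibility $(x_i - c) \mid g$; once this is in hand, the contradiction with nondegeneracy is a one-line derivative computation.
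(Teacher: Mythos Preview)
Your proof is correct and takes a cleaner, more unified route than the paper's. The paper splits into cases according to whether the ACR species $X_i$ is catalyst-only in every reaction (equivalently $v_i=0$) or not: in the non-catalyst case it parametrizes the one-dimensional stoichiometric class by $x_i$ and observes that two positive steady states in one class would already violate ACR; in the catalyst-only case it reduces to a bivariate polynomial and runs a further subcase analysis on the dimension of its positive zero set, invoking the inverse function theorem in one subcase and a sign argument in the other. Your contradiction argument bypasses all of this: from two nondegenerate steady states in one class you extract $v_i=0$ for free, and your implicit-function-theorem step forces the local zero set of $g$ to be open in the hyperplane $H=\{x_i=c\}$, from which $(x_i-c)\mid g$ follows globally by the identity principle for polynomials. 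What your approach buys is a short, case-free argument; what the paper's approach buys is a slightly sharper conclusion in its first case (no multistationarity at all, not merely no \emph{nondegenerate} multistationarity). One minor observation: once you know the local zero set is a smooth $(n-1)$-manifold contained in $H$, comparing tangent spaces already gives $\nabla g(x^{(1)})\parallel e_i$, hence $\nabla g(x^{(1)})\cdot v=0$ directly, so the divisibility detour---while correct---is not strictly needed for the contradiction.
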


\begin{proof}
	Assume that $G$ 
	is one-dimensional, with $n$ species.
		Thus, $G$ has $n-1$ linearly independent conservation laws.
 Let $\kappa^*$ be a vector of positive rate constants for which there is ACR.  
 We may assume that the ACR species is $X_1$ (by relabeling species, if needed).   Let $f_1,\dots, f_n$ denote the right-hand sides of the mass-action ODEs arising from $(G,\kappa^*)$.
 
 Let $x^*=(x_1^*, \dots, x_n^*)$ denote an arbitrary positive steady state of $(G,\kappa^*)$. (The ACR-value is $x^*_1$.)
 Let $P_{x^*}$ denote the (one-dimensional) stoichiometric compatibility class that contains $x^*$.  
It suffices to show that (1) $x^*$ is the unique positive steady state in $P_{x^*}$ or (2) $x^*$ is degenerate.

 	 We consider two cases.

	{\bf Case (a):} $X_1$ is \uline{not} a catalyst-only species (in some reaction of $G$).  
	This implies that $f_2,\dots, f_n$ are all scalar multiples of $f_1$, and that 
	the compatibility class 
	$P_{x^*}$ is defined by $n-1$ conservation laws of the form 
	$x_j=a_j x_1+b_j$, where $a_j,b_j \in \mathbb{R}$, for $j\in \{2,3,\dots, n\}$.
	By substituting these $n-1$ relations into $f_1$, we obtain a univariate polynomial in $x_1$, which we denote by $h$.  If $h$ has multiple positive roots, then there is no ACR, which is a contradiction.  
	If, on the other hand, $h$ does not have multiple positive roots, then
	$P_{x^*}$ does not contain multiple positive steady states (that is, $x^*$ is the unique positive steady state in $P_{x^*}$).

	{\bf Case (b):} $X_1$ is a catalyst-only species in all reactions of $G$.	
	In this case, $f_1=0$, and $x_1=x_1^*$ is a conservation law of $G$, and it is one of the defining equations of the compatibility class 
	$P_{x^*}$.  By relabeling species $X_2,\dots, X_n$, if needed, we may assume that 
	$X_2$ is not a catalyst-only species (as $G$ is one-dimensional). 
	Thus, we can ``extend'' the conservation law $x_1=T$ to a ``basis'' of $n-1$ conservation laws 
	 that define the compatibility class
	$P_{x^*}$,
	by appending $n-2$ conservation laws of the form 
	 $x_j=a_jx_2+b_j$, where $a_j,b_j \in \mathbb{R}$, for $j\in \{3,4,\dots, n\}$. 
	 
	 Next, we substitute these $n-2$ conservation relations into $f_2$, which yields a polynomial in $x_1$ and $x_2$, which we denote by $g$. Consider the following set, which is the positive variety of $g$ in $\mathbb{R}^2_{>0}$ (the values of $x_3,\dots, x_n$ are free, so we ignore them):
	\begin{align} \label{eq:variety-of-g}		 
	\Sigma ~:=~ \{x\in \mathbb{R}^2_{>0} \mid g(x_1,x_2)=0 \}.
	 \end{align}
	 By construction and the fact that there is ACR in $X_1$, the set $\Sigma$ is contained in the hyperplane  (line) $x_1=x_1^*$, and so is either one-dimensional or zero-dimensional.
We consider these two subcases separately.  
First, assume that $\Sigma$ is one-dimensional. In this subcase, $\Sigma$ equals the subset of the hyperplane $x_1=x_1^*$ in the positive quadrant $\mathbb{R}^2_{>0}$, and so the compatibility class
	$P_{x^*}$ consists entirely of positive steady states.  The Inverse Function Theorem now implies that every positive steady state of
	$P_{x^*}$ (in particular,
	 $x^*$) is degenerate.

	 Consider the remaining subcase, in which $\Sigma$ is zero-dimensional
	(that is, $\Sigma$ consists of finitely many points).   
It follows that $g$ is either non-negative on  $\mathbb{R}^2_{>0}$ or non-positive on  $\mathbb{R}^2_{>0}$, and so $f_2$ is either non-negative on $P_{x^*}$ or non-positive on $P_{x^*}$. 
Consequently, as every $f_i$ is a scalar multiple of $f_2$,
the steady state $x^*$ is degenerate.
\end{proof}

\subsection{Bimolecular networks} \label{sec:bimol-prelim-results}
We begin this subsection with a result that clarifies how the polynomials arising in mass-action ODEs are constrained when the network is bimolecular.

\begin{lemma}[Bimolecular networks]\label{lem:bimol-Descartes}
	Consider a bimolecular mass-action system $(G,\kappa^*)$ with $n$ species.
	Let $f_i$ be the right-hand side of the mass-action ODE for species $X_i$ (for some  $1 \leq i \leq n$). 
	Fix positive values $a_j >0$ for all $j \in \{1,2,\dots, n\} \smallsetminus \{i\}$.
	Let $g_i$ denote the univariate polynomial obtained by evaluating $f_i$ at $x_j=a_j$ for all $j \in \{1,2,\dots, n\} \smallsetminus \{i\}$.  If the polynomial $g_i$ is nonzero, then $g_i$ has at most one sign change and hence has at most one positive root.
\end{lemma}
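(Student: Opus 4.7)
The plan is to exploit bimolecularity to force the univariate polynomial $g_i$ into a very rigid sign pattern, and then read off the conclusion either from Descartes' rule of signs or from Vieta's formulas.

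First, I would observe that because $G$ is bimolecular, every monomial of $f_i$ has total degree at most $2$ in $x_1, \dots, x_n$. Consequently, after the substitution $x_j = a_j$ for $j \neq i$, the result is a polynomial in the single variable $x_i$ of degree at most $2$; write
\[
g_i(x_i) ~=~ \gamma\, x_i^2 + \beta\, x_i + \alpha.
\]
The proof reduces to pinning down the signs of $\alpha$ and $\gamma$.

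The constant term $\alpha$ collects precisely those monomials of $f_i$ that do not involve $x_i$, evaluated at the positive numbers $a_j$. By Lemma~\ref{lem:hungarian}, every monomial with a negative coefficient in $f_i$ is divisible by $x_i$, so each such monomial has a non-negative coefficient, and hence $\alpha \geq 0$. For the leading coefficient $\gamma$, I would trace where an $x_i^2$ contribution to $f_i$ can come from: only a reaction whose reactant is $2X_i$. For any such reaction $2X_i \to y'$ with rate $\kappa$, bimolecularity forces $y'_i \leq 2$, and since $y' \neq 2X_i$ would violate being a reaction, we actually have $y'_i \leq 1$; thus the contribution $\kappa(y'_i - 2)$ to the coefficient of $x_i^2$ in $f_i$ is strictly negative. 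Summing over all such reactions (or noting that if there are none, the coefficient is $0$), we conclude $\gamma \leq 0$.

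With $\alpha \geq 0$, $\gamma \leq 0$, and $g_i \not\equiv 0$, I would finish by a short case check on the coefficient sign pattern $(\gamma, \beta, \alpha)$ (suppressing zero entries). In every case the leading nonzero coefficient is non-positive and the trailing nonzero coefficient is non-negative, with the middle entry (if present) free, and one verifies that this sequence has at most one sign change. Descartes' rule of signs then gives at most one positive root. I would also note an alternate closing argument via Vieta: if $g_i$ had two positive roots then, as a nonzero quadratic with $\gamma < 0$, its product of roots would be $\alpha/\gamma$, which is $\leq 0$ under our sign conditions, a contradiction.

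The only genuinely non-routine step is establishing $\gamma \leq 0$; this is where bimolecularity is essential, since without it a reaction like $2X_i \to 3X_i$ would produce a positive $x_i^2$ coefficient and destroy the argument. Once that structural observation is in hand, the rest is bookkeeping with Descartes' rule.
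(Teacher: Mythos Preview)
Your proposal is correct and follows essentially the same approach as the paper: establish that $g_i$ has degree at most $2$, non-negative constant term, and non-positive leading coefficient, then apply Descartes' rule of signs. Your write-up supplies more justification for the sign conditions (via Lemma~\ref{lem:hungarian} and an explicit analysis of reactions with reactant $2X_i$) than the paper, which simply asserts them; note a small slip in your phrasing---it is $y' = 2X_i$ (not $y' \neq 2X_i$) that would make the reaction trivial.
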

\begin{proof}
Let $g_i$ denote the nonzero polynomial obtained by evaluating $f_i$ at $x_j=a_j$ for all $j\neq i$.  
Several properties of $g_i$ arise from the fact that $G$ is bimolecular:
(1) $\deg(g_i) \leq 2$, (2) the coefficient of $x_i^2$ is non-positive, and (3) the constant coefficient is non-negative.  Thus, $g_i$ has at most one sign change, and so Descartes' rule of signs implies that $g_i$ has at most one positive root.  
\end{proof}

The next two results 
pertain to bimolecular mass-action systems in which the right-hand side of some ODE vanishes (Propositions~\ref{lem:when-ODE-is-0}) or vanishes when evaluated at an ACR-value (Proposition~\ref{lem:when-evaluated-ODE-is-0}). We motivate these results through the following example.

	\begin{example}[Enlarged Shinar-Feinberg network] \label{ex:preview-to-2-lemmas}
		A common way to  construct a network with an ACR species (e.g., $A$) is through the existence of an $f_i$ that becomes zero when we substitute the ACR-value in place of the species. We illustrate this idea through the following network:
		\[
		G~=~
		\{A+B \xrightarrow{\kappa_1} 2B ,~ B\xrightarrow{\kappa_2} A,~ 0 \xleftarrow{\kappa_3} B+C \xrightarrow{\kappa_4} 2B,~ 0\xrightarrow{\kappa_5}C\}~.
		\]
		This network is constructed from a well-studied network first introduced by Shinar and Feinberg~\cite{ACR} 
		by adding three reactions involving a new species ($C$).
		We examine the mass-action ODE for $B$:
		\begin{align*}
		&\frac{dx_2}{dt}~=~ 
		\kappa_1x_1x_2-\kappa_2 x_2 - \kappa_3 x_2x_3+ \kappa_4 x_2x_3 = x_2(\kappa_1 x_1-\kappa_2) + x_2x_3(-\kappa_3+\kappa_4)
		~=~ g+h~ =:~ f_2~,
		\end{align*}
		where $g:=  x_2(\kappa_1 x_1-\kappa_2)$ (which is the right-hand side of the ODE for $X_2$ in the original Shinar-Feinberg network) and 
		$h:= x_2x_3(-\kappa_3+\kappa_4)$ (arising from the additional reactions, involving $X_3$).

		Assume $\kappa_3=\kappa_4$. It is easy to check that $(G,\kappa)$ has 
		a positive steady state and also has 
		ACR in species $X_1$ with ACR-value $\alpha = \kappa_2/\kappa_1$. Also, observe that $f_2|_{x_1=\alpha}=0$, as a result of the equalities $g|_{x_1=\alpha}=0$ and $h=0$ (which is due to the equality $\kappa_3=\kappa_4$).
		
		The next two results characterize which reactions can exist in such a situation. More precisely:
		\begin{itemize}
			\item Proposition \ref{lem:when-ODE-is-0} gives conditions that hold when a mass-action ODE is zero (effectively characterizing what reactions 
			can yield $h=0$ in this case). 
			\item Proposition \ref{lem:when-evaluated-ODE-is-0} gives conditions that hold when a mass-action ODE is zero when evaluated at the ACR-value (effectively characterizing what reactions 
			can 
				yield $f_2|_{x_1=\alpha}=0$ in this case, involving a decomposition like the one we observed above: $f_2|_{x_1=\alpha}=g|_{x_1=\alpha} +h$).
		\end{itemize} 
	\end{example}

The next result uses the following notation:

\begin{notation}[Empty complex] \label{notation:dummy-variable}
	We introduce the dummy variable $X_0:=0$, so that (for instance) $X_0$ is the empty complex and $X_i+X_0 := X_i$ for any species $X_i$.
\end{notation}

The following result clarifies which reactions can exist if some mass-action ODE is zero.

\begin{proposition}[When $f_i$ is zero] \label{lem:when-ODE-is-0}
	Let $G$ be a bimolecular reaction network with $n$ species $X_1, X_2, \dots, X_n$.  Fix $1 \leq i \leq n$.
	Let $\kappa^*$ denote a vector of positive rate constants for $G$, 
	and 
	let $f_i$ denote the right-hand side of the mass-action ODE for species $X_i$ in the system $(G, \kappa^*)$. If $f_i$ is the zero polynomial, then 
	the set of reactions of $G$ in which $X_i$ is a non-catalyst-only species is a (possibly empty) subset of the reactions listed here (where our use of $X_0$ follows Notation~\ref{notation:dummy-variable}):  
	\begin{enumerate}
		\item the reactions of the form  $X_i +X_j \to 2X_i$ 
		(and we denote the rate constant by $\kappa^*_{1,j}$), where $j\in \{0,1,\ldots n\} \smallsetminus \{i\}$,
		\item the reactions of the form  $X_i + X_j \to \star$ (with rate constant $\kappa^*_{2,j,\ell}$, where $\ell$ is an index for such reactions), where $j\in \{0,1,\ldots n\} \smallsetminus \{i\}$ and $\star$ is any complex that does not involve $X_i$,
	\end{enumerate}
	and, additionally, the following relationships among the rate constants hold:
	\begin{align} \label{eq:relation-among-rate-constants-when-f-is-zero}
	\kappa^*_{1,j} ~=~
	\sum_\ell\kappa^*_{2,j,\ell} ~
	\quad \quad
	{\rm for~all~} j\in \{0,1,\ldots n\} \smallsetminus \{i\}~,
	\end{align}
	where a rate constant is set to 0 if the corresponding reaction is not in $G$.
\end{proposition}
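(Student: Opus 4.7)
The plan is to expand $f_i$ monomial-by-monomial by grouping reactions according to their reactant complex, use the hypothesis $f_i \equiv 0$ to force each coefficient to vanish, and then invoke bimolecularity to turn each vanishing condition into a strong sign-constrained relation among the rate constants. Writing $\mathcal{R}$ for the set of distinct reactant complexes of $G$,
\[
f_i ~=~ \sum_{y \in \mathcal{R}} \Bigl( \sum_{y \to y' \in G} \kappa^*_{y\to y'}\,(y'_i - y_i) \Bigr)\, x^y.
\]
Since distinct reactants yield distinct monomials, $f_i \equiv 0$ forces, for each $y \in \mathcal{R}$,
\[
\sum_{y \to y' \in G} \kappa^*_{y \to y'}\,(y'_i - y_i) ~=~ 0. \qquad (\ast)
\]
Only reactions with $y'_i \neq y_i$ (those in which $X_i$ is non-catalyst-only) contribute nontrivially to $(\ast)$, and bimolecularity forces $y_i, y'_i \in \{0,1,2\}$.

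I then case-split on $y_i$. If $y_i = 0$, every summand of $(\ast)$ satisfies $y'_i - y_i = y'_i \geq 0$; since each $\kappa^*_{y\to y'} > 0$, we must have $y'_i = 0$ for every such $y'$, so $X_i$ is catalyst-only in every reaction from $y$. If $y_i = 2$, so $y = 2X_i$, then $y'_i - y_i = y'_i - 2 \leq 0$ in every summand, which forces $y'_i = 2$; bimolecularity then gives $y' = 2X_i = y$, which is not a valid reaction. Hence no reactions of $G$ have reactant $2X_i$, and in particular none appear among the non-catalyst-only reactions.

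The only remaining case is $y_i = 1$, in which $y = X_i + X_j$ for some $j \in \{0,1,\ldots,n\}\setminus\{i\}$ (using the shorthand $X_0 = 0$ from Notation~\ref{notation:dummy-variable}). In any reaction $y \to y'$, bimolecularity gives $y'_i \in \{0,1,2\}$: the catalyst-only case is $y'_i = 1$, and the non-catalyst-only cases are $y'_i = 2$ (forcing $y' = 2X_i$ by bimolecularity, a type~(1) reaction with rate constant $\kappa^*_{1,j}$) and $y'_i = 0$ (giving a type~(2) reaction into some complex $\star$ not involving $X_i$, with rate constants $\kappa^*_{2,j,\ell}$ indexed by $\ell$). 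Substituting into $(\ast)$, with the convention that absent reactions carry rate constant $0$, yields
\[
\kappa^*_{1,j} \cdot (2-1) ~+~ \sum_\ell \kappa^*_{2,j,\ell} \cdot (0-1) ~=~ 0,
\]
which is precisely the relation~\eqref{eq:relation-among-rate-constants-when-f-is-zero}. No step is especially delicate; the main care lies in the sign arguments for $y_i \in \{0,2\}$, which work only because bimolecularity confines $y'_i$ to $\{0,1,2\}$ and thereby keeps each summand of $(\ast)$ of a single sign in those two cases.
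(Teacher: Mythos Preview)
Your proof is correct and follows essentially the same approach as the paper's: both group reactions by reactant complex, use the positivity of rate constants together with the bimolecular bound $y_i,y'_i\in\{0,1,2\}$ to rule out reactants with $y_i=0$ or $y_i=2$ via a sign argument, and then read off the rate-constant relation from the surviving case $y_i=1$. The only cosmetic difference is that the paper phrases the $y_i=0$ and $y_i=2$ cases in terms of the constant and $x_i^2$ coefficients of $f_i$ viewed as a polynomial in $x_i$, whereas you case-split directly on $y_i$; the content is identical.
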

\begin{proof}
	Let $\kappa^*$ be a vector of positive rate constants for a bimolecular network $G$ with $n$ species, 
	and let $f_i$ be the right-hand side of $(G,\kappa^*)$ for the species $X_i$. 
	Let $\Sigma$ denote the set of reactions of $G$ in which $X_i$ is a non-catalyst-only species.  Reactions {\em not} in $\Sigma$ do not contribute to $f_i$, so we ignore them for the rest of the proof.
	
	We claim that for all reactions in $\Sigma$, the reactant complex is {\em not} one of the following 5 types: $0$, $X_j$, $X_j + X_{j'}$, $2X_j$, $2X_i$ for any $j,j' \in \{1,2,\dots, n \} \smallsetminus \{i\}$.  Indeed, any of the first $4$ types of complexes would yield a constant term in $f_i$ (when viewed as a polynomial in $x_i$) consisting of a sum of monomials with positive coefficients; similarly, the last type ($2X_i$) would yield a negative $x_i^2$ term (the fact that $G$ is bimolecular is used here).  However, $f_i$ is zero, so the claim holds.

	It follows that, for every reaction in $\Sigma$, the reactant complex either is $X_i$ or has the form $X_i+X_j$ for some $j \in \{1,2,\dots, n \} \smallsetminus \{i\}$.  It is straightforward to check that all possible such reactions (in which $X_i$ is a non-catalyst-only species) are listed in the proposition.  Next, reactions of type~(1) in the proposition contribute positively to $f_i$, while those of type~(2) contribute negatively, as follows:
	\begin{align} \label{eq:f_i-in-detail}
	f_i ~=~
	\left(
	\kappa^*_{1,0} - \sum_\ell \kappa^*_{2, 0, \ell} 
	\right) x_i 
	~+
	\sum_{j\in  \{1,2,\dots, n \} \smallsetminus \{i\}}
	\left(
	\kappa^*_{1,j} - \sum_\ell \kappa^*_{2,j,\ell} 
	\right)
	x_i x_j~.
	\end{align}
	As $f_i=0$, the coefficient of $x_i$ and the coefficient of each $x_{ij}$ in~\eqref{eq:f_i-in-detail} must be $0$, which yields the desired equalities~\eqref{eq:relation-among-rate-constants-when-f-is-zero}.
\end{proof}

Proposition~\ref{lem:when-ODE-is-0} concerns general (bimolecular) mass-action systems, and now we consider those with ACR.  The next result characterizes which reactions can exist if some mass-action ODE becomes zero when evaluated at the ACR-value.

\begin{proposition}[When $f_i$ is zero at the ACR-value] \label{lem:when-evaluated-ODE-is-0}
	Let $G$ be a bimolecular reaction network with species $X_1, X_2, \dots, X_n$, where $n \geq 2$.
	Let $\kappa^*$ denote a vector of positive rate constants. 
	Assume that the mass-action system $(G, \kappa^*)$ has ACR in species $X_1$ with ACR-value $\alpha >0$. 
	Fix $2 \leq i \leq n$.  
	Let $f_i$ denote the right-hand side of the mass-action ODE for species $X_i$ in the system $(G, \kappa^*)$.  
	If $f_i\neq 0$ and $f_i|_{x_1=\alpha}$ is the zero polynomial, then 
	the set of reactions of $G$ in which $X_i$ is a non-catalyst-only species 
	is a nonempty subset of the following reactions (the same as the ones in Proposition~\ref{lem:when-ODE-is-0}): 
	\begin{enumerate}
		\item the reactions of the form  $X_i +X_j \to 2X_i$ 
		(and we denote the rate constant by $\kappa^*_{1,j}$), where $j\in \{0,1,\ldots n\} \smallsetminus \{i\}$,
		\item the reactions of the form  $X_i + X_j \to \star$ (with rate constant $\kappa^*_{2,j,\ell}$, where $\ell$ is an index for such reactions), where $j\in \{0,1,\ldots n\} \smallsetminus \{i\}$ and $\star$ is any complex that does not involve $X_i$.
	\end{enumerate}
	Additionally, the following 
	relationship between the ACR-value $\alpha$ and the rate constants holds: 
	\begin{align} \label{eq:relation-rate-constants}
	\alpha ~=~ \frac{ \left( \sum_{\ell} \kappa^*_{2,0, \ell} \right) - \kappa^*_{1,0}}{  \kappa^*_{1,1} -  \left( \sum_\ell \kappa^*_{2,1,\ell} \right)  }~. 
	\end{align}
	In particular, the numerator and denominator of~\eqref{eq:relation-rate-constants} are nonzero. 
	Finally, if $n\geq 3$, then the following 
	relationships among the rate constants hold:
	\begin{align} \label{eqn:kappa-relation} 
	\kappa^*_{1,j} ~=~
	\sum_\ell\kappa^*_{2,j,\ell} ~
	\quad \quad
	{\rm for~all~} j\in \{2,3,\ldots n\} \smallsetminus \{i\}~.
	\end{align}
	(In equations~\eqref{eq:relation-rate-constants}--\eqref{eqn:kappa-relation}, a rate constant is set to 0 if the corresponding reaction is not in $G$.)
\end{proposition}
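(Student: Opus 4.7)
I would adapt the strategy from the proof of Proposition~\ref{lem:when-ODE-is-0}, organizing the argument around the coefficients of $f_i\big|_{x_1=\alpha}$ viewed as a polynomial in $x_2,\dots,x_n$. Let $\Sigma$ denote the set of reactions in which $X_i$ is non-catalyst-only, so only reactions in $\Sigma$ contribute to $f_i$. Because $G$ is bimolecular, $f_i$ is a $\mathbb{R}$-linear combination of the monomials $1$, $x_j$, $x_jx_k$, $x_j^2$, where each monomial records the reactant complex from which it arises. Substituting $x_1=\alpha$ collapses only those monomials that contain $x_1$; every coefficient of the resulting polynomial must vanish, which will force both the structural restriction on reactant complexes in $\Sigma$ and the rate-constant identities.

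First I would knock out the ``bad'' reactant complexes by a sign argument. For each monomial in $x_2,\dots,x_n$ that is untouched by the substitution $x_1=\alpha$, namely $x_jx_k$, $x_j^2$ with $j,k\in\{2,\dots,n\}\setminus\{i\}$, and $x_i^2$, the contributing reactants are $X_j+X_k$, $2X_j$, and $2X_i$ respectively. Any reaction in $\Sigma$ with such a reactant has $y_i'-y_i$ of a single sign (positive for $j,k\ne i$, since $y_i=0$ forces $y_i'\ge 1$; negative for $2X_i$, since $y_i=2$ forces $y_i'\le 1$), so the sum of positive rate constants cannot cancel: no such reactions lie in $\Sigma$. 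Next, I would handle coefficients that do pick up an $\alpha$-weighted term. For $j\in\{2,\dots,n\}\setminus\{i\}$, the coefficient of $x_j$ in $f_i\big|_{x_1=\alpha}$ is (coeff of $x_j$ in $f_i$)$\,+\,\alpha\cdot$(coeff of $x_1x_j$); both are sums of strictly positive contributions from reactants $X_j$ and $X_1+X_j$, so both must vanish, excluding these reactants from $\Sigma$. The constant term is (const)$\,+\,\alpha\cdot$(coeff $x_1$)$\,+\,\alpha^2\cdot$(coeff $x_1^2$), and the same positivity argument eliminates reactants $0$, $X_1$, and $2X_1$ from $\Sigma$. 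Together these steps prove that the only reactants allowed in $\Sigma$ are $X_i$, $X_i+X_1$, and $X_i+X_j$ for $j\in\{2,\dots,n\}\setminus\{i\}$, which is exactly types (1)--(2).

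I would then derive the rate-constant identities. For the coefficient of $x_ix_j$ (with $j\in\{2,\dots,n\}\setminus\{i\}$, requiring $n\geq 3$), reactions from $X_i+X_j$ have $y_i=1$ and hence $y_i'\in\{0,2\}$; by bimolecularity, $y_i'=2$ forces product $2X_i$ (type~1), while $y_i'=0$ forces product $\star$ not involving $X_i$ (type~2). Setting the coefficient to zero yields $\kappa^*_{1,j}=\sum_{\ell}\kappa^*_{2,j,\ell}$, which is~\eqref{eqn:kappa-relation}. For the coefficient of $x_i$, which equals $C_i+\alpha D_i$ with $C_i:=\kappa^*_{1,0}-\sum_\ell\kappa^*_{2,0,\ell}$ (from reactant $X_i$) and $D_i:=\kappa^*_{1,1}-\sum_\ell\kappa^*_{2,1,\ell}$ (from reactant $X_i+X_1$), vanishing gives $\alpha=-C_i/D_i$, which is exactly~\eqref{eq:relation-rate-constants} after a sign rearrangement.

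Finally, I would verify the non-vanishing and non-emptiness claims. Since the analysis shows that $f_i$ reduces to $C_ix_i+D_ix_1x_i$ plus terms $\kappa^*_{1,j} - \sum_\ell\kappa^*_{2,j,\ell}$ multiplying $x_ix_j$, and the latter were shown to be zero, the hypothesis $f_i\ne 0$ forces $(C_i,D_i)\ne (0,0)$. Because $\alpha>0$ and $C_i+\alpha D_i=0$, the vanishing of either $C_i$ or $D_i$ would force the other to vanish, contradicting $f_i\ne 0$; hence both the numerator and denominator in~\eqref{eq:relation-rate-constants} are nonzero. In particular, at least one reaction of type~(1) or~(2) with $j\in\{0,1\}$ must be present, so $\Sigma$ is nonempty. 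The main bookkeeping obstacle is simply tracking cases cleanly to ensure every bimolecular reactant type is either excluded or accounted for; the sign arguments themselves are all one-line observations once the monomial decomposition of $f_i\big|_{x_1=\alpha}$ is written down.
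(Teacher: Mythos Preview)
Your argument is correct. The main difference from the paper's proof is organizational rather than substantive: the paper first observes that $f_i\neq 0$ together with $f_i|_{x_1=\alpha}=0$ forces $(x_1-\alpha)\mid f_i$, and then bimolecularity pins down the cofactor to a linear form $\beta x_i+\gamma+\sum_{j\neq i}\delta_j x_j$; the Hungarian lemma (Lemma~\ref{lem:hungarian}) applied to the expanded product kills $\gamma$ and all $\delta_j$ in one stroke, leaving $f_i=\beta x_i(x_1-\alpha)$. From there the paper reads off the two monomials, identifies the contributing reactions, and then invokes Proposition~\ref{lem:when-ODE-is-0} to handle the residual reactions with reactant $X_i+X_j$ for $j\geq 2$. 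Your route bypasses both the divisibility trick and the appeal to Proposition~\ref{lem:when-ODE-is-0}: you go monomial-by-monomial in $f_i|_{x_1=\alpha}$ and use the same sign observation (contributions from reactants not involving $X_i$ are strictly positive, from $2X_i$ strictly negative) to exclude the bad reactant complexes and to derive~\eqref{eqn:kappa-relation} directly. The paper's factorization is slicker and makes the structure $f_i=\beta x_i(x_1-\alpha)$ appear immediately; your approach is more elementary and self-contained, at the cost of a longer case enumeration. One small point to make explicit in your write-up: the classification of reactions with reactant $X_i$ or $X_i+X_1$ into types~(1)--(2) (which you state for $j\geq 2$ and use implicitly in defining $C_i,D_i$) follows by the identical argument, since $y_i=1$ and bimolecularity force $y_i'\in\{0,2\}$ regardless of $j$.
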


\begin{proof}
	Assume that $f_i$ is nonzero, but $f_i|_{x_1=\alpha}$ is zero.  Using properties of polynomial rings over a field, it follows that $(x_1- \alpha)$ divides $f_i$. 
	From the fact that $G$ is bimolecular, we conclude that:
	\begin{align} \notag
	f_i~&=~(x_1 - \alpha) \left( \beta x_i + \gamma + \sum_{j \in [n] \smallsetminus \{i\}} \delta_j x_j \right) \\
	\label{eq:RHS-evaluated}
	&=~  \beta x_1 x_i 
	+ \gamma x_1
	+ \left( \sum_{j \in [n] \smallsetminus \{i\}} \delta_j x_1 x_j \right)
	-\alpha \beta x_i 
	-\alpha \gamma 
	- \left( \sum_{j \in [n] \smallsetminus \{i\}} \alpha \delta_j x_j \right)~,
	\end{align}
	for some real numbers $\beta, \gamma, \delta_j$, at least one of which is nonzero.  
	
	In the right-hand side of~\eqref{eq:RHS-evaluated}, the variable $x_i$ does not appear in any of the following monomials (here the hypothesis $i\neq 1$ is used): 
	\[
	\gamma x_1, \quad -\alpha \gamma, \quad
	\delta_j x_1 x_j, \quad -\alpha \delta_j x_j~, 
	\]
	for
	$ j \in \{1,2,\dots,n\} \smallsetminus \{i\} $,  
	so Lemma~\ref{lem:hungarian} implies that the coefficients of these monomials must be non-negative.  
	Since $\alpha>0$, we conclude that $\gamma=0$ and $\delta_j=0$ 
	(for all $j \in \{1,2,\dots,n\} \smallsetminus \{i\} $).

	Thus, using~\eqref{eq:RHS-evaluated}, we have 
	$f_i=  \beta x_1 x_i - \alpha \beta x_i$, for some $\beta \in \mathbb{R} \smallsetminus \{0\} $.  Next, we investigate which reactions contribute to the two  monomials in $f_i$.  
	For $ \beta x_1 x_i$, the contributing reactions 
	have the form $X_1+X_i\to 2X_i$ and $X_1 + X_i \to \star$, where $\star$ does not involve $X_i$. The first reaction contributes positively,  
	while the second type contributes negatively. 
	Let $\kappa^*_{1,1}$ be the reaction rate constant for $X_1+X_i\to 2X_i$ (as in the statement of the lemma) and $\kappa^*_{2,1,\ell}$ be the rate constant for reactions of type $X_1 + X_i \to \star$, where $\ell$ is an index for all the reactions of this type. We conclude that 
	\begin{align} \label{eq:sum-beta}
	\kappa^*_{1,1} - \sum_\ell \kappa^*_{2,1,\ell} ~= ~ \beta~.
	\end{align}

	Similarly, the monomial $-\alpha \beta x_i$ in $f_i$ comes from reactions of the form $X_i \to 2X_i$, which contributes positively, 
	and $X_i \to \star$, which contributes negatively, where $\star$ is a complex that does not involve $X_i$. 
	Hence, 
	\begin{align} \label{eq:sum-alpha-beta}
	\kappa^*_{1,0} - 
	\sum_\ell \kappa^*_{2,0,\ell}  ~=~ - \alpha \beta~.
	\end{align} 
	Now the 
	equations~\eqref{eq:sum-beta}
	and~\eqref{eq:sum-alpha-beta} together imply 
	the desired equality~\eqref{eq:relation-rate-constants}.
	
	Next, let $\Sigma$ denote the set of reactions of $G$ in which $X_i$ is a non-catalyst-only species.  We showed above that $\Sigma$ contains a (nonempty) subset of reactions with rate constants labeled by $\kappa^*_{1,0}$, $\kappa^*_{1,1}$, $\kappa^*_{2,0,\ell}$, $\kappa^*_{2,1,\ell}$.  Let $\Sigma' \subseteq \Sigma$ denote the remaining reactions, and let $G'$ denote the subnetwork defined by the reactions in $\Sigma'$.  
	Let $\kappa'$ be obtained from $\kappa^*$ by restricting to coordinates corresponding to reactions in $\Sigma'$.  
	By construction, the mass-action ODE of $(G', \kappa')$ for species $X_i$ has right-hand side equal to $0$.
	So, 
	Proposition~\ref{lem:when-ODE-is-0} applies (where reactions arising from $j=0,1$ in that proposition are absent from $G'$ by construction), 
	and yields two conclusions.  
	First, 
	$\Sigma'$ is a subset of the reactions listed in Proposition~\ref{lem:when-evaluated-ODE-is-0} (specifically, with $j \not = 0,1$), and so $\Sigma$ is a subset of the full list (including $j = 0,1$).
	Second, 
	the equations~\eqref{eq:relation-among-rate-constants-when-f-is-zero} hold (for $j \not = 0,1$), which are the desired equalities~\eqref{eqn:kappa-relation}. 
\end{proof}

\begin{remark} \label{rmk:lemma}
	The reactions listed in 
	Propositions~\ref{lem:when-ODE-is-0} and~\ref{lem:when-evaluated-ODE-is-0} (the lists are the same) are not reversible.  Hence, if $G$ is a {\em reversible} network satisfying the hypotheses of either proposition, then $X_i$ is a catalyst-only species in every reaction of $G$.
\end{remark}

	\begin{example_contd}[\ref{ex:preview-to-2-lemmas}] \label{ex:after-2-lemmas}
		We revisit the enlarged Shinar-Feinberg network, $
		G=
				\{A+B \xrightarrow{\kappa_1} 2B ,~ B\xrightarrow{\kappa_2} A,~ 0 \xleftarrow{\kappa_3} B+C \xrightarrow{\kappa_4} 2B,~ 0\xrightarrow{\kappa_5}C\}.$
		Recall that, when $\kappa_3=\kappa_4$, the mass-action system $(G,\kappa)$ has ACR in $X_1$ with ACR-value $\alpha= \kappa_2/\kappa_1$, and that $f_2|_{x_1=\alpha}=0$.  In the notation of Proposition~\ref{lem:when-evaluated-ODE-is-0}, the rate constants of reactions in which $X_2$ is non-catalyst-only are:
		\[
		\kappa^*_{1,1}=\kappa_1, ~\quad
		\kappa^*_{1,0}=\kappa_2, ~\quad
		\kappa^*_{2,3,1}=\kappa_3,~\quad
		\kappa^*_{1,3}=\kappa_4~.  
		\]
		Now the formula in Proposition~\ref{lem:when-evaluated-ODE-is-0} for the ACR-value~\eqref{eq:relation-rate-constants} exactly yields the ACR-value computed earlier: $\alpha= \kappa_2/\kappa_1$, and  
		the relationship among rate constants~\eqref{eqn:kappa-relation} recapitulates $\kappa_3=\kappa_4$.
	\end{example_contd}

\begin{remark} \label{rem:robust-ratio}
The formula for the ACR-value, in~\eqref{eq:relation-rate-constants}, is related to the concept of ``robust ratio'' introduced by Johnston and Tonello~\cite{tonello2017network}.
\end{remark}

\subsection{Three reversible reactions are necessary for multistationarity} \label{sec:multismallnetworks} 
 Recall that, in~\eqref{eq:3-rev-rxn-rates}, we saw an instance of a (nondegenerately) multistationary, bimolecular network that consists of $3$ pairs of reversible reactions.  In this subsection, we prove that bimolecular networks with fewer pairs of reversible reactions are non-multistationary
(Theorem~\ref{prop:2-rev-rxn-no-mss}).  
Our proof of Theorem~\ref{prop:2-rev-rxn-no-mss} 
requires 
several
supporting lemmas on one-dimensional networks.


For the next lemma, recall from Section~\ref{sec:mass-action} that ${\rm cap}_{pos}(G)$ 
(respectively,  ${\rm cap}_{nondeg}(G)$)
denotes the maximum possible number of positive (respectively, nondegenerate and positive) steady states of a network $G$. In Lemma~\ref{lem:lin-tang-zhang} below, part~(1) was conjectured by Joshi and Shiu~\cite{joshi2017small} and then proved by Lin, Tang, and Zhang~\cite[Theorem 4.3]{lin-tang-zhang} (see also~\cite{pantea-voitiuk}). 
Part~(2) is due to Tang and Zhang~\cite[Theorem~6.1]{tang-zhang}.

\begin{lemma} \label{lem:lin-tang-zhang}
Let $G$ be a one-dimensional reaction network.  
\begin{enumerate}
	\item If $G$ is multistationary and ${\rm cap}_{pos}(G) < \infty$, then $G$ has an embedded one-species network with arrow diagram $(\leftarrow, \to)$ and another with arrow diagram $(\to, \leftarrow)$. 
	\item If ${\rm cap}_{pos}(G) < \infty$, then ${\rm cap}_{nondeg}(G) = {\rm cap}_{pos}(G)$.
\end{enumerate}
\end{lemma}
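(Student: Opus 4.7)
The plan is to exploit the one-dimensional structure by reducing the dynamics on each stoichiometric compatibility class to a scalar ODE. Fix a generator $v$ of the stoichiometric subspace and parameterize each class as $x(t)=x(0)+tv$; the mass-action ODE then collapses to
\[
\dot t ~=~ F(t) ~:=~ \sum_{i=1}^{r} \sigma_i \kappa_i \, (x(0)+tv)^{y_i},
\]
where $\sigma_i\in\{-1,+1\}$ records whether reaction~$i$ pushes along $+v$ or $-v$. Under the hypothesis ${\rm cap}_{pos}(G)<\infty$, $F$ is a nonzero polynomial (after restriction to the relevant positive $t$-interval) with only finitely many positive roots, so its sign pattern can be controlled by Descartes-type reasoning.

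For part~(1), multistationarity gives $(\kappa,x(0))$ for which $F$ has at least two positive roots, forcing at least two sign changes in the coefficients of $F$ as one orders monomials by $t$-degree. The idea is to transport each sign change to an embedded one-species projection. For a species $X_j$ with $v_j\neq 0$, I would collect the reactions responsible for a given sign change, project to $X_j$ in the sense of Definition~\ref{def:embeddednetwork}, and argue that the resulting arrow diagram is determined by the polarity of the sign change: a ``$+$ to $-$'' transition as $t$-degree increases yields $(\to,\leftarrow)$, while a ``$-$ to $+$'' transition yields $(\leftarrow,\to)$. Since two positive roots require both polarities along the coefficient sequence (producing an alternating pattern $+\,-\,+$ or $-\,+\,-$), both arrow diagrams must be realized by some embedded one-species subnetwork. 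The main obstacle will be bookkeeping: the sign $\sigma_i$ that reaction~$i$ contributes to $F$ is $\mathrm{sign}(\langle y_i'-y_i, e_j\rangle/v_j)$, which matches the forward/backward orientation of the projected reaction in $X_j$ only after one fixes the sign of $v$ so that $v_j>0$; a small case analysis on the choice of projection species will be needed, and one must verify that enough reactions survive projection to yield an embedded network with two distinct reactant molecularities (so that the arrow diagram has two entries).

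For part~(2), the strategy is a perturbation argument enabled by the scalar form of $F$. Since ${\rm cap}_{pos}(G)$ is a finite non-negative integer, the supremum is attained at some $\kappa^\circ$ and some compatibility class for which $F_{\kappa^\circ}$ has exactly $k:={\rm cap}_{pos}(G)$ distinct positive roots. If some root $\beta$ has multiplicity $\geq 2$, I would perturb $\kappa^\circ \mapsto \kappa^\circ + t\eta$ (keeping rate constants positive), with $\eta$ concentrated on a single reaction whose reactant monomial is nonzero at $x(\beta)$ and with the sign that pushes $F$ locally through zero near $\beta$ into a two-simple-root configuration. Because $F$ is $\R$-linear in $\kappa$, the $k-1$ already-simple roots of $F_{\kappa^\circ}$ persist as nearby simple roots of the perturbed system, while $\beta$ splits into at least two simple real positive roots, giving more than $k$ distinct positive steady states and contradicting maximality of $k$. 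Hence every positive root of $F_{\kappa^\circ}$ is already simple and thus nondegenerate, yielding ${\rm cap}_{nondeg}(G)\geq{\rm cap}_{pos}(G)$; the reverse inequality is immediate.
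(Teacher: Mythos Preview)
The paper does not give its own proof of this lemma: it is quoted from the literature, with part~(1) attributed to Lin--Tang--Zhang and part~(2) to Tang--Zhang. So there is no in-house argument to compare against; I can only assess your sketch on its own terms.

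For part~(1), the reduction to a scalar polynomial $F(t)$ is fine, and Descartes does force at least two sign changes in the coefficient sequence of $F$. The gap is the transport step. When you expand $(x(0)+tv)^{y_i}$ by the multinomial theorem, a single reaction contributes to \emph{many} powers of $t$, and conversely the coefficient of $t^d$ mixes contributions from reactions with different reactant complexes. Your claim that a ``$+$ to $-$'' transition in the $t$-coefficient sequence yields an embedded one-species network with arrow diagram $(\to,\leftarrow)$ tacitly assumes a bijection between $t$-degrees and reactant molecularities in the projected species; there is no such bijection once $n\geq 2$. The actual argument in the cited paper does not go through the $t$-expansion of $F$; it works directly with the combinatorics of reactant complexes and reaction vectors, and the bookkeeping you flag as ``a small case analysis'' is in fact the substance of the proof.

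For part~(2), your perturbation idea is standard, but the argument as written only handles the case where exactly one of the $k$ roots is multiple. If two roots $\beta,\gamma$ each have even multiplicity, then a perturbation chosen to split $\beta$ into two nearby simple real roots may simultaneously push the roots near $\gamma$ off the real axis, so the total count of distinct positive roots need not increase. You would need either to exhibit a perturbation direction that is ``local'' at $\beta$ and neutral at all other multiple roots (which requires more freedom than varying a single rate constant), or to run an induction on the number of multiple roots with a more careful choice of perturbation. The published proof also exploits variation of the compatibility class (i.e., of $x(0)$), not just of $\kappa$, which gives the extra degrees of freedom needed.
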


Joshi and Shiu showed that the network $G=\{ 0 \leftarrow A \to 2A\}$ is the only one-species, bimolecular network for which ${\rm cap}_{pos}(G) =\infty$~\cite{atoms_multistationarity}. 
The following lemma generalizes this result from one-species networks to one-dimensional networks.  

\begin{lemma}[One-dimensional bimolecular networks with infinitely many steady states]
\label{lem:1-d-infinite-steady-states}
Let $G$ be a one-dimensional and bimolecular reaction network with $n$ species. The following are equivalent:
	\begin{enumerate}
	\item ${\rm cap}_{pos}(G) = \infty$.
	\item Up to relabeling species, $G$ is one of the following networks:
		\begin{enumerate}
		\item $\{2X_1 \leftarrow X_1+X_2 \to 2X_2 \}$, 
		\item $\{X_1 \to 2X_1\} \cup \Sigma$, where $\Sigma$ consists of at least one reaction from the following set: 
	\[
\{0 \leftarrow X_1 \} \cup \{X_i \leftarrow X_1 +  X_i \mid i=2,3,\dots, n\}~.
	\]
		\end{enumerate}
	\end{enumerate}
Additionally, for the networks listed above in 2(a) and 2(b), 
every positive steady state (of every mass-action system arising from the network $G$) is degenerate.
\end{lemma}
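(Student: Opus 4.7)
The implication $(2) \Rightarrow (1)$ is a direct calculation. For 2(a), the ODEs collapse to $\dot{x}_1 = (\kappa_1 - \kappa_2) x_1 x_2 = -\dot{x}_2$, and taking $\kappa_1 = \kappa_2$ makes every point of $\mathbb R^2_{>0}$ a steady state. For 2(b), after combining terms the ODE for $X_1$ has the form $\dot x_1 = x_1\bigl((\kappa^+ - \kappa^-) - \sum_{i \ge 2} \kappa_i x_i\bigr)$, where $\kappa^+$ is the rate of $X_1 \to 2X_1$, $\kappa^-$ that of $X_1 \to 0$, and $\kappa_i$ that of $X_1+X_i \to X_i$ (missing rates set to zero). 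Its positive zero set is a nontrivial affine hyperplane that is a union of stoichiometric compatibility classes (lines parallel to $e_1$), so any class contained in the hyperplane consists entirely of positive steady states.

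For $(1) \Rightarrow (2)$, pick a primitive integer generator $v$ of the one-dimensional stoichiometric subspace $S$, and rewrite the ODEs as $\dot{x} = g(x)\,v$ with $g(x) = \sum_r c_r \kappa_r x^{y_r}$, where each reaction $y_r \to y_r'$ of $G$ satisfies $y_r' - y_r = c_r v$ for some $c_r \in \mathbb Z$. Positive steady states are the positive zeros of $g$, and a stoichiometric compatibility class is a line $\{x_0 + tv : t \in \mathbb R\}$ with $x_0 \in \mathbb R^n_{>0}$. Since $G$ is bimolecular, $g$ has total degree $\le 2$, so $t \mapsto g(x_0 + tv)$ is a polynomial in $t$ of degree $\le 2$; having infinitely many positive zeros forces it to vanish identically in $t$, which imposes three linear constraints on the rate constants (the coefficients of $t^0, t^1, t^2$).

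The crux is a case analysis on the primitive direction $v$. Bimolecularity forces every reaction vector $c_r v$ to have entries in $\{-2,-1,0,1,2\}$, bounding the magnitudes of the entries of $v$ and giving, for each candidate $v$, a short explicit list of possible reactions. Combined with the sign constraints on the coefficients of $g$ from Lemma~\ref{lem:hungarian} (for example, when $v_i > 0$ any monomial of $g$ not divisible by $x_i$ must have nonnegative coefficient), the three vanishing conditions express a nonnegative linear combination of rate constants as zero, so each contribution must individually vanish. Working through the candidate directions $v$ up to relabeling of species: $v = e_1$ survives, and the surviving reactions are $X_1 \to 2X_1$ together with a nonempty subset of $\{0 \leftarrow X_1\} \cup \{X_i \leftarrow X_1+X_i\}_{i \ge 2}$, which is family 2(b); $v = e_1 - e_2$ survives, and the surviving reactions, together with the hypothesis that every species appears in at least one complex (which forces $n = 2$), give $G = \{X_1+X_2 \to 2X_1,\, X_1+X_2 \to 2X_2\}$, which is 2(a). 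All other $v$ --- notably $e_1 + e_2$, primitive $v$ with an entry of absolute value $\ge 2$ such as $(2,-1,0,\dots,0)$, and $v$ with at least three nonzero entries --- admit only a short list of compatible reactions whose rates are all forced to zero by the identities, ruling out $\mathrm{cap}_{pos}(G) = \infty$. This enumeration across directions is the main technical obstacle; the computation within each case is routine.

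Finally, the degeneracy claim follows by a direct Jacobian computation. In 2(a), the existence of a positive steady state already forces $\kappa_1 = \kappa_2$, and then $f_\kappa \equiv 0$, so the Jacobian vanishes at every point. In 2(b), rows $2,\dots,n$ of the Jacobian are identically zero, and at any positive steady state $x^*$ the $(1,1)$-entry vanishes; hence $df(x^*) \cdot e_1 = 0$, so $\Im(df(x^*)|_S) = \{0\} \subsetneq S = \mathrm{span}(e_1)$, and $x^*$ is degenerate.
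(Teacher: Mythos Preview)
Your proposal is correct and follows essentially the same strategy as the paper: a case analysis on the primitive generator $v$ of the one-dimensional stoichiometric subspace, listing all bimolecular reactions compatible with each $v$ and determining when the resulting degree-$\le 2$ polynomial can vanish identically along a compatibility class. The one noteworthy difference is how the ``other'' directions are dispatched. The paper observes that for each of $(1,1,0,\dots)$, $(1,-2,0,\dots)$, $(1,1,-1,0,\dots)$, $(1,1,-2,0,\dots)$, $(1,1,-1,-1,0,\dots)$ the admissible network is a subnetwork of a single reversible pair and then invokes the deficiency-zero theorem (Lemma~\ref{lem:thm:def-0}); you instead propose to kill these directly via the vanishing/sign argument. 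Both work: your route is a bit more self-contained (no appeal to Lemma~\ref{lem:thm:def-0}) at the price of a few more lines of computation per direction, while the paper's shortcut handles all five at once.

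One point you should make explicit when you write up the $v=e_1-e_2$ case: the three $t$-coefficients are \emph{not} individually sign-definite there, so the phrase ``the three vanishing conditions express a nonnegative linear combination of rate constants as zero'' hides a choice. You need to take the right linear combination of those three conditions---equivalently, evaluate the reduced univariate polynomial at $x_1=0$ (to isolate a nonnegative combination of the $\ell$-rates) and, by symmetry, at $x_2=0$ (for the $k$-rates). That is precisely how the paper argues, and it is what your Lemma~\ref{lem:hungarian} observation about monomials not divisible by $x_i$ is really buying you. The degeneracy computations at the end match the paper's.
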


\begin{proof}
Let $G$ be a one-dimensional, bimolecular reaction network.  
Up to relabeling species, the one-dimensional stoichiometric subspace is spanned by one of the following seven vectors:
\begin{footnotesize}
	\begin{align}
	\label{eq:line-1}
	& (1,0,0,\dots, 0)~,  (1,-1,0,0,\dots, 0)~,\\
	\label{eq:line-2}
	& (1,1,0,0,\dots, 0)~, 
	 (1,-2, 0,0,\dots, 0)~, 
	(1,1,-1,0,0,\dots, 0)~, 
	 (1,1,-2, 0,0,\dots, 0)~, 
	 (1,1,-1,-1, 0,0,\dots, 0)~.
	\end{align}
\end{footnotesize}
We first consider the case when the stoichiometric subspace is spanned by one of the five vectors listed in~\eqref{eq:line-2}. The network $G$ is then a subnetwork of one of the following networks (where we use $A,B,C,D$ in place of $X_1,X_2,X_3,X_4$ for ease of notation);
	\begin{align*}
	\{0 \leftrightarrows A+B\}~,~
	\{A \leftrightarrows 2B\}~,~
	\{A+B \leftrightarrows C\}~,~
	\{A+B \leftrightarrows 2C\}~,~
	\{A+B \leftrightarrows C+D\}~.
	\end{align*}
A direct calculation shows that the deficiency of $G$ is 0, so the deficiency-zero theorem (Lemma~\ref{lem:thm:def-0})
implies that $G$ is not multistationary.  In particular, ${\rm cap}_{pos}(G) < \infty$.

Having shown that the case of~\eqref{eq:line-2} 
is consistent with Lemma~\ref{lem:1-d-infinite-steady-states}, we now consider the remaining two cases, from~\eqref{eq:line-1}, separately.
First, assume the stoichiometric subspace of $G$ is spanned by $(1,0,0,\dots, 0)$.  It follows that the reactions of $G$ form a subset of the following $2n+4$ reactions:
	\begin{align*}
    &0 \stackrel[k_0]{m_0}{\leftrightarrows} X_1 
	\stackrel[m_1]{\ell_0}{\leftrightarrows} 2X_1 \quad \quad 
    0 \stackrel[k_1]{\ell_1}{\leftrightarrows} 2X_1 \\
 	& X_i \stackrel[k_i]{m_i}{\leftrightarrows} X_1 + X_i \quad \textnormal{for } i=2,3,\dots, n~.
	\end{align*}
The ODEs for species $X_2, X_3, \dots, X_n$ are $\frac{dx_i}{dt}=0$ so, $x_i=T_i$ (with $T_i>0$) for $i=2,3,\dots,n$ are the corresponding conservation laws. We substitute these conservation laws into the ODE for~$X_1$:
	\begin{align}
	\label{eq:ODE-for-X1-when-only-one-non-cat-species}
	\frac{d x_1}{dt}|_{x_2=T_2,\dots, x_n=T_n} 
	\quad
	=
	\quad
	& (k_0 + 2 k_1)
	+ 
	(k_2T_2 + \dots + k_nT_n)
	+m_1 x_1 
	\\
	\notag
	&	
 	- (m_0+ m_2T_2 + \dots + m_nT_n) x_1
	- (\ell_0 + 2 \ell_1) x_1^2~.
	\end{align}
When at least one $k_i$ is positive and all other $k_j$'s are non-negative, the right-hand side of~\eqref{eq:ODE-for-X1-when-only-one-non-cat-species}, viewed as a polynomial in $x_1$, has a nonzero constant term and, hence, is not the zero polynomial.  Similarly, if $\ell_0$ or $\ell_1$ is positive and $\ell_0,\ell_1 \geq 0$, then the right-hand side of~\eqref{eq:ODE-for-X1-when-only-one-non-cat-species} has a nonzero coefficient of $x_1^2$ and is again a nonzero polynomial.  We conclude that if $G$ contains at least one of the reactions labeled by $k_i$ or $\ell_i$, then ${\rm cap}_{pos}(G) < \infty$, which is consistent with Lemma~\ref{lem:1-d-infinite-steady-states}.

We now consider the case when $G$ contains {\em no} reactions labeled by $k_i$ or $\ell_i$, that is, every reaction of $G$ is one of the following $n+1$ reactions:
	\begin{align*}
    &0 \stackrel{m_0}{\leftarrow} X_1 
	\stackrel{m_1}{\to} 2X_1 
	 \\
 	& X_i \stackrel{m_i}{\leftarrow} X_1 + X_i \quad \textnormal{for } i=2,3,\dots, n~.
	\end{align*}

The right-hand side of the ODE for $X_1$, as in~\eqref{eq:ODE-for-X1-when-only-one-non-cat-species}, becomes $x_1(m_1 - m_0 - m_2T_2 - \dots - m_nT_n)$.  In order for this polynomial in $x_1$ to become the zero polynomial for some choice of positive rate constants of $G$ (equivalently, ${\rm cap}_{pos}(G) = \infty$), we must have $m_1>0$ and $m_j>0$ for at least one of $j=0,2,3,\dots,n$. This gives exactly
 	the reactions listed in Lemma~\ref{lem:1-d-infinite-steady-states}(2)(b). In this case, given $m_j>0$ for the reactions appearing in the network, we can always choose $T_j>0$, such that 
 the right-hand side of the ODE for $X_1$ vanishes (i.e., ${\rm cap}_{pos}(G) = \infty$).  Moreover, 
 when this right-hand side vanishes is the only situation in which there are positive steady states, and 
 an easy calculation shows that all such positive steady states are degenerate.  This concludes our analysis of networks with stoichiometric subspace spanned by the vector $(1,0,0,\dots, 0)$.

Our final case is when the stoichiometric subspace is spanned by the vector $(1,-1,0,\dots, 0)$.  In this case, the
reactions of $G$ form a subset of the following $2n+4$ reactions:
	\begin{align*}
    	&
	X_1 \stackrel[k_1]{\ell_1}{\leftrightarrows} X_2 
	\quad \quad
	 2 X_1 \stackrel[k_2]{\ell_2}{\leftrightarrows} 2X_2 
	\quad \quad 
	2 X_1 \stackrel[k_3]{m_1}{\leftrightarrows} X_1 + X_2  \stackrel[m_2]{\ell_3}{\leftrightarrows} 2 X_2 
	\\
	&
	X_1 + X_i \stackrel[k_{i+1}]{\ell_{i+1}}{\leftrightarrows} X_2 + X_i\quad  \textnormal{for } i=3,4,\dots, n~.
	\end{align*}
The conservation laws are $x_1+x_2=T_2$ and  $x_i=T_i$ for $i=3,4,\dots, n$.  The ODE for species $X_1$ is:
	\begin{align} \label{eq:final-case-proof}
	\frac{dx_1}{dt} \quad=\quad 
		&
		-(2k_2+k_3) x_1^2
		-k_1x_1
		-(k_4 x_3 + \dots + k_{n+1}x_n) x_1
		+ (m_1-m_2) x_1 x_2 
		\\
		& \notag
		\quad 
		+ (\ell_1 x_2 + 2 \ell_2 x_2^2 + \ell_3 x_2^2)
		+ (\ell_4 x_3 + \dots + \ell_{n+1} x_n) x_2~.
	\end{align}
Consider the subcase when at least one of the $\ell_i$ is positive and all other $\ell_j$'s are non-negative.  After substituting the expressions arising from the conservation laws (namely, $x_2= T_2-x_1$ and $x_i=T_i$ for $i=3,4,\dots, n$) into the right-hand side of the ODE~\eqref{eq:final-case-proof}, we obtain a polynomial in $x_1$ that has a positive constant term (see 
the second line of the right-hand side of~\eqref{eq:final-case-proof}). 
Hence, if $G$ contains at least one of the reactions labeled by $\ell_i$, then  ${\rm cap}_{pos}(G) < \infty $.

By symmetry, if $G$ has at least one of the reactions labeled by $k_i$, then again ${\rm cap}_{pos}(G) < \infty $.  Hence, if $G$ contains a reaction labeled by $\ell_i$ or $k_i$, then this subcase is consistent with the lemma.

Consider the remaining subcase, when $G$ is a subnetwork of 
$\{
2 X_1 \stackrel{m_1}{\leftarrow} X_1 + X_2  \stackrel{m_2}{\to} 2 X_2 
\}
$, and so consists of only one or two reactions.  
If $G$ has only one reaction, then 
Proposition~\ref{prop:2-or-3-rxns3species} implies that
${\rm cap}_{pos}(G) = 0 < \infty $ (which is consistent with the lemma).  

Now assume that 
$G$ has two reactions, that is, 
$G=\{
2 X_1 \stackrel{m_1}{\leftarrow} X_1 + X_2  \stackrel{m_2}{\to} 2 X_2 
\}
$.  
 If $m_1 \neq m_2$, then the ODE for $X_1$ is  $\frac{dx_1}{dt}= (m_1 - m_2) x_1 x_2$ and so there are no positive steady states. When $m_1=m_2$, the ODE for $X_1$ becomes $\frac{dx_1}{dt}=0$ and it follows that
${\rm cap}_{pos}(G) = \infty$.  Moreover, a  simple computation shows that all the positive steady states are degenerate. This concludes the proof.
\end{proof}

\begin{example_contd}[\ref{ex:generalized-degenerate-network}]	\label{ex:degenerate-1-d}
The network $\{ 0 \leftarrow A \to 2A ,~ B \leftarrow A+B\}$ is one of the networks listed in Lemma~\ref{lem:1-d-infinite-steady-states}.2(b), where $n=2$.
\end{example_contd}

\begin{lemma}[One-dimensional bimolecular networks]
\label{lem:1-dim-no-mss}
If $G$ is a one-dimensional, bimolecular network, then $G$ is \uline{not} nondegenerately multistationary.
\end{lemma}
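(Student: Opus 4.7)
The plan is to prove the contrapositive by splitting into two cases based on whether the network has finitely or infinitely many positive steady states in total, invoking the two characterization lemmas just proved and the Lin--Tang--Zhang result.

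Suppose for contradiction that $G$ is a one-dimensional bimolecular network which is nondegenerately multistationary. In particular, $G$ is multistationary. First, consider the case ${\rm cap}_{pos}(G) = \infty$. Then Lemma~\ref{lem:1-d-infinite-steady-states} forces $G$ (up to relabeling) to belong to one of the two explicit families listed there, and the same lemma asserts that every positive steady state of every mass-action system arising from such a $G$ is degenerate. This directly contradicts nondegenerate multistationarity, disposing of this case.

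In the remaining case ${\rm cap}_{pos}(G) < \infty$, Lemma~\ref{lem:lin-tang-zhang}(1) applies and furnishes an embedded one-species network of $G$ whose arrow diagram is $(\leftarrow, \to)$. The key observation I would then make is that an embedded one-species network of a bimolecular network is itself bimolecular: projecting any complex of molecularity at most $2$ onto a single coordinate yields a complex in $\{0, X, 2X\}$. Hence the reactant complexes of this embedded network lie in $\{0, X, 2X\}$, and any product complex of a reaction starting from a reactant $aX$ satisfies $b \leq 2$. For the diagram $(\leftarrow, \to)$ with reactant complexes $a_1 X < a_2 X$, the symbol $\rho_1 = \leftarrow$ requires $a_1 \geq 1$, and $\rho_2 = \to$ requires a reaction $a_2 X \to bX$ with $b > a_2$, hence $a_2 \leq 1$. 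Combined with $a_1 < a_2$, this forces $a_1 < 1 \leq a_1$, a contradiction. Therefore no bimolecular one-species network has arrow diagram $(\leftarrow, \to)$, and this case is impossible as well.

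The only non-routine step is the combinatorial observation that rules out the arrow diagram $(\leftarrow,\to)$ under bimolecularity, and the main conceptual point is to recognize that the embedded network inherits bimolecularity by projection. Everything else is a clean appeal to Lemmas~\ref{lem:lin-tang-zhang} and~\ref{lem:1-d-infinite-steady-states}. No delicate ODE analysis is needed beyond what those lemmas already encapsulate; in particular, Lemma~\ref{lem:lin-tang-zhang}(2) is not explicitly invoked, since we obtain our contradiction at the level of multistationarity rather than at the level of nondegenerate counts.
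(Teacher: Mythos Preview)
Your proof is correct and follows essentially the same approach as the paper's: both split on whether ${\rm cap}_{pos}(G)$ is infinite (invoking Lemma~\ref{lem:1-d-infinite-steady-states}) or finite (invoking Lemma~\ref{lem:lin-tang-zhang}(1) and then arguing that an embedded one-species network with arrow diagram $(\leftarrow,\to)$ cannot be bimolecular). Your combinatorial argument ruling out $(\leftarrow,\to)$ under bimolecularity is spelled out in more detail than the paper's one-line observation, but the content is the same.
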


\begin{proof}
Assume that $G$ is a one-dimensional network that is nondegenerately multistationary.  We must show that $G$ is not bimolecular.  We claim that ${\rm cap}_{pos}(G)  $ is finite.  Indeed, if ${\rm cap}_{pos}(G)  = \infty$, then Lemma~\ref{lem:1-d-infinite-steady-states} implies that all positive steady states are degenerate and so $G$ is {\em not} nondegenerately multistationary, which is a contradiction.  Hence,  ${\rm cap}_{pos}(G)  < \infty$.

The hypotheses of part~(1) of
Lemma~\ref{lem:lin-tang-zhang}
 are satisfied, that is, ${\rm cap}_{pos}(G) < \infty$, and $G$ is one-dimensional and multistationary.  Therefore, $G$ has an embedded one-species network with arrow diagram $(\leftarrow, \to)$.  Such an embedded network (e.g., $\{0 \leftarrow A,~ 2A \to 3A\}$) involves at least one complex that is {\em not} bimolecular, and so $G$ is also not bimolecular. 
\end{proof}

\begin{theorem}[Networks with up to two reversible reactions]
\label{prop:2-rev-rxn-no-mss}
If $G$ is a bimolecular reaction network that consists of one or two pairs of reversible reactions, then $G$ is not multistationary.

\end{theorem}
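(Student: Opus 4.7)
The plan is to split into cases according to the number of reversible pairs and the linkage-class structure of $G$, handling most cases via the deficiency-zero and deficiency-one theorems (Lemmas~\ref{lem:thm:def-0} and~\ref{lem:thm:def-1}) and reducing the remaining subcase to the one-dimensional bimolecular setting already analyzed.

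First I would dispatch the case of a single reversible pair $\{y_1 \leftrightarrows y_2\}$: this gives $m=2$ complexes, $\ell=1$ linkage class, and $\dim(S)=1$, so the deficiency is $\delta = 2 - 1 - 1 = 0$, and Lemma~\ref{lem:thm:def-0} directly applies.

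Next I would consider two reversible pairs, subdivided by whether the two pairs share a complex. If they share a complex, then $G$ has $m=3$ and $\ell=1$; if they are disjoint, then $G$ has $m=4$ and $\ell=2$. The stoichiometric dimension $\dim(S)$ can only be $1$ or $2$, since all reaction vectors lie in the span of (at most) two pairs of anti-parallel vectors. In either subcase with $\dim(S)=2$, the network deficiency is $\delta = m - \ell - 2 = 0$, and in the disjoint subcase each linkage class also has deficiency zero, so Lemma~\ref{lem:thm:def-0} (or Lemma~\ref{lem:thm:def-1} for the disjoint subcase) rules out multistationarity.

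The remaining subcase is $\dim(S) = 1$, that is, $G$ is a one-dimensional, reversible, bimolecular network. The hard part will be showing non-multistationarity here, because Lemma~\ref{lem:1-dim-no-mss} only rules out \emph{nondegenerate} multistationarity. I would split on the value of ${\rm cap}_{pos}(G)$. If ${\rm cap}_{pos}(G) < \infty$, then Lemma~\ref{lem:lin-tang-zhang}(2) gives ${\rm cap}_{nondeg}(G) = {\rm cap}_{pos}(G)$, so any multistationarity would automatically be nondegenerate, contradicting Lemma~\ref{lem:1-dim-no-mss}. If instead ${\rm cap}_{pos}(G) = \infty$, then Lemma~\ref{lem:1-d-infinite-steady-states} forces $G$ (up to relabeling of species) to be either $\{2X_1 \leftarrow X_1+X_2 \to 2X_2\}$ or a network of the form $\{X_1 \to 2X_1\} \cup \Sigma$ from the list in that lemma; a glance at either list shows that no such network contains a reversible pair of reactions, contradicting the assumption on $G$. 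Combining the cases completes the proof.
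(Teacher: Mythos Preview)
Your proof is correct and follows essentially the same route as the paper: the single-pair and two-pair full-dimensional cases are handled by the deficiency-zero theorem, and the one-dimensional case is reduced via Lemmas~\ref{lem:1-d-infinite-steady-states}, \ref{lem:lin-tang-zhang}(2), and~\ref{lem:1-dim-no-mss}. The only cosmetic difference is that the paper first observes (from Lemma~\ref{lem:1-d-infinite-steady-states} and reversibility) that ${\rm cap}_{pos}(G)<\infty$ and then invokes the other two lemmas, whereas you split on ${\rm cap}_{pos}(G)$ and rule out the infinite case by the same non-reversibility observation; the logic is the same.
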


\begin{proof}
Assume that $G$ is bimolecular and consists of one or two pairs of reversible reactions.  
Let $p$ denote the number of pairs of reversible reactions (so, $p=1$ or $p=2$), and $\ell$ the number of linkage classes.  Let $s$ be the dimension of the stoichiometric subspace (so, $s=1$ or $s=2$).

{\bf Case 1: $p=1$}. 
The deficiency of $G$ is $\delta = 2-1-1=0$ and $G$ is weakly reversible. Hence, by the 
deficiency-zero theorem (Lemma~\ref{lem:thm:def-0}) the network is not multistationary.

{\bf Case 2: $p=s=2$}. 
If $\ell=1$, then the deficiency is $\delta = 3-1-2=0$.  If $\ell=2$, then the deficiency is $\delta=4-2-2=0$.  Therefore, for either value of $\ell$, the deficiency-zero theorem (Lemma~\ref{lem:thm:def-0})
implies that the network is not multistationary.

{\bf Case 3:} $p=2$ and $s=1$. 
$G$ is one-dimensional, bimolecular, and reversible.
So, Lemma~\ref{lem:1-d-infinite-steady-states} 
implies that ${\rm cap}_{pos}(G)  < \infty$.  Now Lemma~\ref{lem:lin-tang-zhang}(2) yields ${\rm cap}_{pos}(G) = {\rm cap}_{nondeg}(G)$,
and Lemma~\ref{lem:1-dim-no-mss} implies that ${\rm cap}_{nondeg}(G) ~\leq~ 1$. Thus, ${\rm cap}_{pos}(G) \leq 1$, or, equivalently, $G$ is non-multistationary.
\end{proof}

\section{Main results on bimolecular networks} 
\label{sec:preclude-mss-or-acr}

In this section, we establish minimal conditions for a bimolecular network to admit ACR and nondegenerate multistationarity simultaneously. 
These minimal conditions are
in terms of the numbers of species, reactions, and reactant complexes.  The main result is as follows.

\begin{theorem}[Conditions for coexistence of ACR and nondegenerate multistationarity] \label{thm:at-least-3-and-5}
Let~$G$ be a bimolecular reaction network. If there exists a vector of positive rate constants~$\kappa^*$ such that the mass-action system $(G,\kappa^*)$ has ACR and also is nondegenerately multistationary, then:
 \begin{enumerate}
 \item $G$ has at least $3$ species.
 \item $G$ has at least $3$ reactant complexes (and hence at least $3$ reactions) and at least 5 complexes (reactant and product complexes).
 \item If $G$ is full-dimensional, then $G$ has at least $5$ reactant complexes (and hence at least $5$ reactions).
 \end{enumerate}
 \end{theorem}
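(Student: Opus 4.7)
The three bounds in this theorem are largely independent, and I plan to treat them in order, drawing on the preparatory results of Section~\ref{sec:preliminary-results}. The two substantive difficulties will be the $n=2$ full-dimensional subcase of part~(1) and the complex-count bound $m\geq 5$ in part~(2).

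For part~(1), I will rule out $n=1$ and $n=2$ directly. The case $n=1$ is immediate from Lemma~\ref{lem:1-dim-no-mss}, and $n=2$ with $G$ one-dimensional from Proposition~\ref{prop:1-d-no-coexistence}. The delicate subcase is $n=2$ with $G$ full-dimensional. Here, assuming (after relabeling) ACR in $X_1$ with value $\alpha>0$, every positive steady state lies on the line $\{x_1=\alpha\}$, so I will apply Lemma~\ref{lem:bimol-Descartes} to $g_2(x_2):=f_2|_{x_1=\alpha}$: if $g_2\not\equiv 0$ it has at most one positive root, contradicting multistationarity. Otherwise $g_2\equiv 0$; if further $f_2\equiv 0$ then Corollary~\ref{cor:degeneracy} yields degeneracy, while if $f_2\not\equiv 0$ then Proposition~\ref{lem:when-evaluated-ODE-is-0} forces $f_2=\beta x_2(x_1-\alpha)$ with $\beta\neq 0$. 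Crucially, the same proposition excludes every non-trivial reaction with reactant $2X_2$ (since in any such reaction $X_2$ is non-catalyst-only, but reactant $2X_2$ does not appear on the list), so $f_1$ contains no $x_2^2$ monomial. Hence $f_1(\alpha,x_2)$ is affine in $x_2$ and has at most one positive root unless it vanishes identically; in the latter case the whole line $\{x_1=\alpha\}\cap\mathbb{R}^2_{>0}$ is a positive steady-state continuum, which is degenerate by the Inverse Function Theorem. Either way nondegenerate multistationarity fails.

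For the reactant-count bound in part~(2), assume $n\geq 3$ from part~(1) and suppose $j\leq 2$. The case $j=1$ is killed by Proposition~\ref{prop:2-or-3-rxns3species}(1). For $j=2$, I split on the stoichiometric dimension $s$: if $s\geq 2$ then either Proposition~\ref{prop:2-or-3-rxns3species}(2) (full-dimensional subcase, using $j=2\leq n$) or Proposition~\ref{prop:min-num-reac-with-k-cons-law} (with $k=n-s\geq 1$, so that $2\leq j\leq n-k$) forces every positive steady state to be degenerate; if $s=1$, Lemma~\ref{lem:1-dim-no-mss} denies nondegenerate multistationarity. Part~(3) then follows from the same propositions: with $G$ full-dimensional, Proposition~\ref{prop:2-or-3-rxns3species}(2) rules out $j\leq n$ and Proposition~\ref{prop:rank-of-N}(3) rules out $j=n+1$, so $j\geq n+2\geq 5$.

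The complex-count bound $m\geq 5$ is the main obstacle. Suppose $m\leq 4$, so $(j,m)\in\{(3,3),(3,4),(4,4)\}$, and note that Lemma~\ref{lem:1-dim-no-mss} eliminates $s=1$, leaving $s\geq 2$. A brief enumeration of the feasible triples $(m,\ell,s)$ with $\delta=m-\ell-s\geq 0$, $m\leq 4$, $s\geq 2$ shows that $\delta\leq 1$ always, and that each linkage-class deficiency $\delta_i$ satisfies $\delta_i\leq 1$ with $\sum\delta_i=\delta$. When $G$ is weakly reversible, the deficiency-zero or deficiency-one theorem (Lemma~\ref{lem:thm:def-0} or Lemma~\ref{lem:thm:def-1}) denies multistationarity. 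When $G$ is not weakly reversible, my plan is to exploit the existence of a complex $y$ lying outside every terminal strongly-connected component: when $\delta=0$, the classical Horn--Feinberg--Jackson theorem immediately forbids positive steady states, and when $\delta=1$ a direct inspection of the matrix $N$ in~\eqref{eqn:ODEexpression} shows that either $\operatorname{rank}(N)<s$ (yielding degeneracy via Proposition~\ref{prop:rank-of-N}(1)) or the network has a species that is consumed without ever being produced (precluding positive steady states). Because the space of non-weakly-reversible digraph shapes on at most $4$ vertices with at least $3$ reactant vertices is small and finite, this reduces to a finite case enumeration; it is nevertheless the most laborious part of the proof.
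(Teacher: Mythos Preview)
Your treatments of part~(1), part~(3), and the reactant-count half of part~(2) are correct and in several places more economical than the paper's. For part~(1) the paper routes through the full classification in Theorem~\ref{thm:2species}, whereas your direct argument---using Lemma~\ref{lem:bimol-Descartes} on $f_2|_{x_1=\alpha}$, then Proposition~\ref{lem:when-evaluated-ODE-is-0} to rule out the reactant $2X_2$ so that $f_1(\alpha,\cdot)$ is affine in $x_2$---is self-contained and shorter. For the reactant bound with $j=2$ you cleanly invoke Proposition~\ref{prop:min-num-reac-with-k-cons-law} in the non-full-dimensional subcase, while the paper's proof of Theorem~\ref{thm:not_fulldim}(1) instead argues directly that all $f_\ell$ are scalar multiples of one another and then case-splits on the factored form of $f_i$.

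The complex-count bound $m\ge 5$ is where your plan goes wrong. The paper's argument is three lines: $m\le 4$ and $s\ge 2$ give $\delta=m-\ell-s\le 1$, with $\delta=1$ forcing $\ell=1$, and then the deficiency theorems (Lemmas~\ref{lem:thm:def-0}--\ref{lem:thm:def-1}) rule out multistationarity outright. You are right to notice that Lemma~\ref{lem:thm:def-1} is \emph{stated} only for weakly reversible networks; the paper is implicitly relying on the general deficiency-one theorem (Feinberg~1987/1995, the references given for that lemma), whose hypothesis is not weak reversibility but that each linkage class contain a single terminal strong-linkage class. That said, your proposed workaround has a concrete error. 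In the $\delta=1$ non-weakly-reversible case you invoke Proposition~\ref{prop:rank-of-N}(1), but that proposition requires $G$ to be full-dimensional; here $\delta=1$ with $m\le 4$ forces $s=2<3\le n$, so $G$ is \emph{not} full-dimensional and the proposition does not apply (the analogous rank argument you need is the one inside the proof of Proposition~\ref{prop:min-num-reac-with-k-cons-law}). Moreover, the dichotomy you assert---either $\operatorname{rank}(N)<s$ or some species is only consumed, never produced---is stated without justification, and it is not evident that it holds for every non-weakly-reversible digraph on four complexes with three or four reactant vertices; the promised ``finite case enumeration'' is a plan, not a proof.
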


This section is structured as follows. 
In Subsection \ref{sec:BinaryRN},
we prove part $(1)$ of Theorem \ref{thm:at-least-3-and-5} 
(specifically, part $(1)$ follows from Proposition~\ref{prop:onespecies} and Theorem~\ref{thm:2species}). 
Theorem~\ref{thm:2species} also analyzes
two-species bimolecular networks with ACR and {\em degenerate} multistationarity.  
Additionally, we 
characterize unconditional ACR in two-species bimolecular networks that are reversible (Theorem~\ref{thm:summary-2-species}).

Subsequently, 
in Subsection \ref{sec:3-species}, 
we prove parts $(2)$ and $(3)$  of Theorem \ref{thm:at-least-3-and-5} (Theorem~\ref{thm:not_fulldim} and Proposition~\ref{prop:fulldim}).
We also consider full-dimensional, 3-species, bimolecular networks with only 4 reactant complexes.  By Theorem~\ref{thm:at-least-3-and-5}, such networks do not allow for the coexistence of ACR and nondegenerate multistationary.  Nevertheless, ACR and {\em degenerate} multistationarity is possible, and we characterize the possible sets of reactant complexes of such networks
(Proposition~\ref{prop:4rxns3species}).

\subsection{Bimolecular networks with one or two species}\label{sec:BinaryRN} 
This subsection characterizes unconditional ACR in reversible networks with only one or two species 
(Proposition~\ref{prop:onespecies} and Theorem~\ref{thm:summary-2-species}).
Notably, our results show that such networks with unconditional ACR are {\em not} multistationary.

\begin{remark}[Reversible networks] \label{rem:reversible1}
Our interest in reversible networks comes from our prior work with Joshi~\cite{joshi-kaihnsa-nguyen-shiu-1}. In that article, our results on multistationarity in randomly generated reaction networks arise from ``lifting'' this property from the following (multistationary) motif:
	\begin{align} \label{eq:motif-from-prevalence}
	& \left\{
	B \leftrightarrows 0 \leftrightarrows A \leftrightarrows B+C~, \quad C {\leftrightarrows} 2C
		\right\}~.
	\end{align}
The question arises, {\em Are there multistationary motifs with fewer species, reactions, or complexes than the one in~\eqref{eq:motif-from-prevalence}?  }
Discovering more motifs might aid in analyzing the prevalence of multistationarity in random reaction networks generated by stochastic models besides the one in~\cite{joshi-kaihnsa-nguyen-shiu-1}.
\end{remark}

\subsubsection{Networks with one species} \label{sec:1-species}

When there is only one species, say $X_1$, 
and the network is bimolecular, 
there are only $3$ possible complexes: $0, X_1, 2X_1$. 
Hence, every such network is a subnetwork of the following network: 
	\begin{align} \label{eq:1-species-net}
	G_{X_1} ~=~	\{0\leftrightarrows X_1\leftrightarrows 2X_1 \leftrightarrows 0\}.
	\end{align}

Therefore, the possible reversible networks, besides $G_{X_1}$ itself, are listed here:
\begin{small}
\begin{align} \label{eq:1-species-bimolecular-rev-nets}
\{0\leftrightarrows X_1\},~
\{X_1\leftrightarrows 2X_1\},~
\{0\leftrightarrows 2X_1\},~ \{0\leftrightarrows X_1\leftrightarrows 2X_1\},~
\{X_1\leftrightarrows 0\leftrightarrows 2X_1\},~
\{0\leftrightarrows 2X_1\leftrightarrows X_1\}~. 
\end{align}
\end{small}
	\begin{proposition}\label{prop:onespecies}
		Every bimolecular network in only one species 
		is \uline{not} nondegenerately multistationary.  
		Every reversible, bimolecular network in only one species 
		has unconditional ACR. 
	\end{proposition}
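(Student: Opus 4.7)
The plan is to work directly with the single mass-action ODE $dx_1/dt = f_1(x_1)$, exploiting the fact that every bimolecular one-species network $G$ is a subnetwork of $G_{X_1}$ from~\eqref{eq:1-species-net}. In particular, $f_1$ is a polynomial in $x_1$ of degree at most $2$ with non-negative constant coefficient and non-positive $x_1^2$ coefficient, so Lemma~\ref{lem:bimol-Descartes} applies directly (taking $n=1$ there, so no other variables need to be substituted).

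First I would establish the non-nondegenerate-multistationarity claim. Fix an arbitrary vector of positive rate constants $\kappa^*$. If $f_1 \not\equiv 0$, then Lemma~\ref{lem:bimol-Descartes} yields at most one positive root of $f_1$, hence at most one positive steady state of $(G,\kappa^*)$, which precludes multistationarity altogether. If instead $f_1 \equiv 0$, then every positive real is a positive steady state; but since $G$ contains at least one (nontrivial) reaction, its stoichiometric subspace is all of $\mathbb{R}$, i.e.\ $G$ is full-dimensional, so Corollary~\ref{cor:degeneracy} implies that every positive steady state is degenerate. Either way $(G,\kappa^*)$ is not nondegenerately multistationary, and since $\kappa^*$ was arbitrary, $G$ is not nondegenerately multistationary.

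Next I would prove the unconditional ACR claim under the additional reversibility hypothesis. Since $G$ is reversible (hence weakly reversible), Remark~\ref{rem:reversible} guarantees the existence of a positive steady state of $(G,\kappa^*)$ for every $\kappa^*$. Combined with the first part, it suffices to rule out the case $f_1 \equiv 0$. I would do this by a short enumeration of the three possible reversible pairs among
$\{0 \leftrightarrows X_1\}$, $\{X_1 \leftrightarrows 2X_1\}$, and $\{0 \leftrightarrows 2X_1\}$.
Writing out the contributions to $f_1$, the constant coefficient equals (forward rate of the first pair, if present) plus twice (forward rate of the third pair, if present), and the $x_1^2$ coefficient equals minus (reverse rate of the second pair, if present) minus twice (reverse rate of the third pair, if present). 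Simultaneous vanishing of both coefficients forces each of the three pairs to be absent, contradicting the fact that $G$ has at least one reaction. Hence $f_1 \not\equiv 0$, and the first part gives exactly one positive steady state; this value is the ACR-value, so $G$ has unconditional ACR in $X_1$.

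The main care needed is the short case analysis of which reaction pairs can contribute to which coefficients; no deeper obstacle is anticipated, since the heavy lifting is absorbed by Lemma~\ref{lem:bimol-Descartes}, Corollary~\ref{cor:degeneracy}, and Remark~\ref{rem:reversible}.
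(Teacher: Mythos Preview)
Your proof is correct and takes a genuinely more elementary route than the paper's. For the first claim, the paper invokes the general one-dimensional machinery of Lemmas~\ref{lem:lin-tang-zhang}--\ref{lem:1-d-infinite-steady-states} (results imported from \cite{lin-tang-zhang,tang-zhang}), whereas you exploit directly that a one-species bimolecular ODE is a quadratic in $x_1$ with at most one sign change (Lemma~\ref{lem:bimol-Descartes}), together with Corollary~\ref{cor:degeneracy} to dispose of the $f_1\equiv 0$ case; this is entirely self-contained. For the second claim, the paper enumerates the seven reversible subnetworks of $G_{X_1}$ and applies the deficiency-zero and deficiency-one theorems (Lemmas~\ref{lem:thm:def-0}--\ref{lem:thm:def-1}) to obtain a unique positive steady state, while you instead combine the existence guaranteed by Remark~\ref{rem:reversible} with the uniqueness already proved in the first part, after a short coefficient argument ruling out $f_1\equiv 0$ in the reversible case. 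Your approach avoids both the external one-dimensional classification results and deficiency theory, at the cost of a small explicit case check; the paper's approach, by contrast, illustrates how the proposition fits into the broader structural theory used elsewhere in the paper.
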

	\begin{proof} 
		Let $G$ be a  bimolecular network with only one species.  Then $G$ is a subnetwork of $G_{X_1}$, in~\eqref{eq:1-species-net}, and the first part of the proposition now follows readily from Lemmas~\ref{lem:lin-tang-zhang}--\ref{lem:1-d-infinite-steady-states}.
		
		Next, assume $G$ is a reversible, bimolecular network with only one species.  Then $G$ is either the network $G_{X_1}$ or one of the networks listed in~\eqref{eq:1-species-bimolecular-rev-nets}.
		Each of these networks is weakly reversible and satisfies the conditions of either the 
		deficiency-zero  
		or deficiency-one theorem 
		(Lemmas~\ref{lem:thm:def-0}--\ref{lem:thm:def-1}). 
		Thus, for every choice of positive rate constants $\kappa$, the mass-action system $(G, \kappa)$ has a unique positive steady state.  Hence, 
		$G$ has unconditional ACR. 
	\end{proof}
\subsubsection{Reversible networks with two species} \label{sec:2-species}

We now consider reversible, bimolecular networks with two species. 
Among such networks, the ones with unconditional ACR are characterized in the following result, which is the main result of this subsection.

\begin{theorem}[Unconditional ACR in reversible, 2-species networks] \label{thm:summary-2-species}
Let $G$ be a reversible, bimolecular reaction network with exactly two species (and at least one reaction). 
\begin{enumerate}
	\item If $G$ is full-dimensional, then the following are equivalent:
		\begin{enumerate}[(a)]
		\item $G$ has unconditional ACR;
		\item $G$ is not multistationary.
		\end{enumerate}
	\item If $G$ is one-dimensional, then the following are equivalent:
		\begin{enumerate}[(a)]
		\item $G$ has unconditional ACR;
		\item Up to relabeling species, $G$ is the (non-multistationary) network $\{X_2 \leftrightarrows X_1+X_2\}$.
		\end{enumerate}
\end{enumerate}
\end{theorem}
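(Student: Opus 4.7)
The plan is to address the two parts separately, leveraging the simple geometry of two-species, bimolecular systems together with the structural results of Section~\ref{sec:preliminary-results}. For part (1), the implication (b)$\Rightarrow$(a) is nearly immediate: since $G$ is reversible (hence weakly reversible), every mass-action system $(G,\kappa)$ admits at least one positive steady state by Remark~\ref{rem:reversible}; being full-dimensional, the unique stoichiometric compatibility class is $\mathbb{R}^2_{>0}$; and non-multistationarity forces at most one positive steady state in this class. A single positive steady state trivially exhibits ACR in every species, and this holds for every $\kappa$, giving unconditional ACR.

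For the converse (a)$\Rightarrow$(b) of part (1), I would assume without loss of generality that $G$ has unconditional ACR in $X_1$ with ACR-value $\alpha=\alpha(\kappa)$; the case of $X_2$ is symmetric. Every positive steady state then lies on the line $\{x_1=\alpha\}$, so it suffices to show the univariate polynomial $g_2(x_2) := f_2(\alpha,x_2)$ has at most one positive root. If $g_2 \not\equiv 0$, this is immediate from Lemma~\ref{lem:bimol-Descartes}, and combining this with the existence of at least one positive steady state (guaranteed by the ACR hypothesis) yields exactly one positive steady state in the unique compatibility class. The main obstacle is to exclude the degenerate possibility $g_2 \equiv 0$: if this held, then Proposition~\ref{lem:when-evaluated-ODE-is-0} (when $f_2 \neq 0$) or Proposition~\ref{lem:when-ODE-is-0} (when $f_2 \equiv 0$), combined with Remark~\ref{rmk:lemma}, would force $X_2$ to be catalyst-only in every reaction of the reversible network $G$, making every reaction vector have zero $X_2$-component and contradicting full-dimensionality.

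Part (2)(b)$\Rightarrow$(a) is a direct calculation: the mass-action ODEs of $\{X_2 \leftrightarrows X_1+X_2\}$ are $\dot x_1 = x_2(\kappa_1 - \kappa_2 x_1)$ and $\dot x_2 = 0$, so $x_1 = \kappa_1/\kappa_2$ at every positive steady state regardless of $\kappa$, yielding unconditional ACR in $X_1$; moreover each stoichiometric ray $\{x_2=T\}$ contains the unique steady state $(\kappa_1/\kappa_2, T)$, so $G$ is non-multistationary. For (a)$\Rightarrow$(b), I would assume without loss of generality that the ACR species is $X_1$ and invoke Lemma~\ref{lem:1-dim-reactants}, which forces every reactant complex to share a common $X_2$-coefficient $c$; reversibility promotes this to \emph{every} complex. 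The bimolecular constraint $a+c \leq 2$ then admits three cases: $c=0$ is impossible since $X_2$ must appear in some complex; $c=2$ leaves only the complex $2X_2$ and hence no nontrivial reaction; and $c=1$ uniquely yields $G = \{X_2 \leftrightarrows X_1+X_2\}$. As flagged above, the one genuinely delicate step is ruling out $g_2 \equiv 0$ in part (1)(a)$\Rightarrow$(b); everything else reduces to bookkeeping and the infrastructure already established in Section~\ref{sec:preliminary-results}.
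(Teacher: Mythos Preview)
Your proposal is correct. Part~(1) matches the paper's argument essentially step for step: the paper likewise handles (b)$\Rightarrow$(a) via reversibility plus full-dimensionality, and handles (a)$\Rightarrow$(b) by fixing $\kappa^*$, reducing to the univariate polynomial $f_2|_{x_1=\alpha}$, invoking Lemma~\ref{lem:bimol-Descartes} when it is nonzero, and ruling out the vanishing case via Remark~\ref{rmk:lemma} (which rests on Propositions~\ref{lem:when-ODE-is-0}--\ref{lem:when-evaluated-ODE-is-0}) exactly as you do.

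For Part~(2), your route differs from the paper's. The paper enumerates, up to relabeling, the four possible ``maximal'' one-dimensional, reversible, bimolecular, two-species networks $G_1,\dots,G_4$ (as in~\eqref{eq:4-networks}) and then checks subnetworks case by case, using Lemma~\ref{lem:1-dim-reactants} to eliminate those whose reactant complexes differ in both species. You instead apply Lemma~\ref{lem:1-dim-reactants} directly to force a common $X_2$-coefficient $c$ across all reactant complexes, promote this to all complexes by reversibility, and then let the bimolecular bound $a+c\le 2$ collapse the possibilities to $c=1$ and hence $G=\{X_2 \leftrightarrows X_1+X_2\}$. Your argument is shorter and avoids the explicit enumeration; the paper's version has the minor advantage of making visible the full landscape of one-dimensional networks (which is reused elsewhere), but for the purpose of this theorem your approach is cleaner.
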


Theorem~\ref{thm:summary-2-species} encompasses Propositions~\ref{prop:2-species-no-conservation laws} and~\ref{prop:2-species-cons-law} below.  

\begin{proposition} \label{prop:2-species-no-conservation laws}
Let $G$ be a full-dimensional, reversible, bimolecular reaction network with exactly two species.
Then the following are equivalent:
		\begin{enumerate}[(a)]
		\item $G$ has unconditional ACR;
		\item $G$ is not multistationary.
		\end{enumerate}
\end{proposition}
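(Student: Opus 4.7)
The plan is to handle the two implications separately, leveraging the machinery built up in Section~\ref{sec:preliminary-results}.

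For the direction (b)$\Rightarrow$(a), the key observation is that the hypotheses together force a unique positive steady state for every~$\kappa$. Full-dimensionality means the only stoichiometric compatibility class is $\mathbb{R}^2_{\geq 0}$, so ``not multistationary'' says there is at most one positive steady state per $\kappa$. Reversibility (hence weak reversibility) guarantees the existence of such a steady state by Remark~\ref{rem:reversible}. Combined, this gives exactly one positive steady state for every $\kappa$, from which unconditional ACR in any (and hence some) species is immediate.

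For the harder direction (a)$\Rightarrow$(b), I would assume, after relabeling species, that $G$ has unconditional ACR in $X_1$. Fix any vector of positive rate constants $\kappa$, let $\alpha = \alpha(\kappa) > 0$ denote the ACR-value, and let $f_1, f_2$ denote the mass-action ODE polynomials. Every positive steady state $(x_1^*, x_2^*)$ satisfies $x_1^* = \alpha$ and $f_2(\alpha, x_2^*) = 0$, so the number of positive steady states is at most the number of positive roots of the univariate polynomial $g_2(x_2) := f_2|_{x_1 = \alpha}$. I would then split into three cases. If $f_2 \equiv 0$, then $x_2$ is a conserved quantity, contradicting full-dimensionality. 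If $f_2 \not\equiv 0$ but $g_2 \equiv 0$, then Proposition~\ref{lem:when-evaluated-ODE-is-0} (applied with $i = 2$, $n = 2$) restricts the set of reactions in which $X_2$ is non-catalyst-only to a nonempty subset of an explicit list of irreversible reactions; by Remark~\ref{rmk:lemma}, reversibility then forces $X_2$ to be catalyst-only in every reaction of $G$, making $f_2 \equiv 0$, a contradiction. If instead $g_2 \not\equiv 0$, Lemma~\ref{lem:bimol-Descartes} bounds the number of positive roots of $g_2$ by one. In all cases, $(G,\kappa)$ has at most one positive steady state, so $G$ is not multistationary.

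The main obstacle is the middle subcase above, where $f_2$ is nonzero but its restriction to the ACR-hyperplane vanishes identically. This is precisely the degeneracy exhibited by Example~\ref{ex:why-need-reversible-in-prop}, and reversibility is what rules it out. The argument hinges on the observation recorded in Remark~\ref{rmk:lemma}: none of the reaction types listed in Proposition~\ref{lem:when-evaluated-ODE-is-0} has its reverse in the same list, so a reversible $G$ satisfying the hypotheses of that proposition must have $X_i$ catalyst-only throughout, which is incompatible with $f_i \not\equiv 0$. Carefully navigating this interplay between the combinatorial shape of admissible reactions, the reversibility constraint, and the sign-pattern control provided by Descartes' rule is the technical crux of the argument.
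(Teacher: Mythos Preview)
Your argument follows the paper's route exactly: (b)$\Rightarrow$(a) via existence (from reversibility, Remark~\ref{rem:reversible}) plus uniqueness (from non-multistationarity and full-dimensionality), and (a)$\Rightarrow$(b) by showing $g_2 := f_2|_{x_1=\alpha}$ is nonzero and then invoking Lemma~\ref{lem:bimol-Descartes}.

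There is one small gap. In your case~1 you write ``if $f_2 \equiv 0$, then $x_2$ is a conserved quantity, contradicting full-dimensionality,'' but $f_{\kappa,2} \equiv 0$ for a \emph{single} fixed $\kappa$ does not by itself force $(0,1) \in S^\perp$; for instance, the full-dimensional (non-reversible) network $\{0 \leftrightarrows X_1,\; 0 \leftarrow X_2 \to 2X_2\}$ has $f_{\kappa,2}\equiv 0$ whenever the two rate constants out of $X_2$ coincide. The fix is precisely the mechanism you already deploy in case~2: Remark~\ref{rmk:lemma} also covers Proposition~\ref{lem:when-ODE-is-0}, so reversibility together with $f_2 \equiv 0$ forces $X_2$ to be catalyst-only in every reaction of $G$, and \emph{that} genuinely contradicts full-dimensionality. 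The paper avoids the case split altogether, handling $f_2|_{x_1=\alpha}\equiv 0$ in one stroke via Remark~\ref{rmk:lemma} (which cites both Propositions~\ref{lem:when-ODE-is-0} and~\ref{lem:when-evaluated-ODE-is-0}).
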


\begin{proof}  Let $G$ 
be a full-dimensional, reversible, bimolecular network with exactly $2$ species.

We first prove $(b) \Rightarrow (a)$. Assume that $G$ is non-multistationary, and let 
$\kappa^*$ be a choice of positive rate constants.  Then, the mass-action system $(G,\kappa^*)$ admits {\em at most} one positive steady state $(x^*_1, x^*_2)$ (here the assumption that $G$ is full-dimensional is used).  
However, the fact that $G$ is reversible guarantees {\em at least} one positive steady state (Remark~\ref{rem:reversible}).
Hence,  $(G, \kappa^*)$ has a unique positive steady state $(x^*_1, x^*_2)$ and therefore 
has ACR in both species with ACR-values $x_1^*$ and $x_2^*$, respectively.  So, $G$ has unconditional ACR.

Next, we prove $(a) \Rightarrow (b)$.   Assume that $G$ has unconditional ACR.  Let $\kappa^*$ be a choice of positive rate constants.  By relabeling species, if necessary, we may assume that the system $(G, \kappa^*)$ has ACR in species $X_1$ with some ACR-value $\alpha>0$.  Every positive steady state of $(G, \kappa^*)$, therefore,  has the form $(\alpha, x_2^*)$, where $x^*_2 \in \mathbb{R}_{>0}$.  We must show that there is at most one such steady state.

Write the mass-action ODEs of $(G, \kappa^*)$ as $\frac{dx_1}{dt} = f_1$ and $\frac{dx_2}{dt}=f_2$. Consider the univariate polynomial $f_2|_{x_1 = \alpha} \in \mathbb{R}[x_2]$.  We claim that this polynomial is not the zero polynomial.  To check this claim,  assume for contradiction that $f_2|_{x_1 = \alpha}$ is zero. As $G$ is reversible,
Remark~\ref{rmk:lemma} (which relies on
Propositions~\ref{lem:when-ODE-is-0}--\ref{lem:when-evaluated-ODE-is-0})
implies that $X_2$ is a catalyst-only species of every reaction of $G$.  We conclude that $G$ is not full-dimensional, which is a contradiction.
Having shown that the univariate polynomial 
$f_2|_{x_1 = \alpha}$ is nonzero, we now use Lemma~\ref{lem:bimol-Descartes} to conclude that $(G, \kappa^*)$ 
has at most one positive steady state of the form $(\alpha, x_2^*)$.
\end{proof}

Proposition~\ref{prop:2-species-no-conservation laws} fails for networks that are not reversible.  Indeed, a network without positive steady states (such as $\{ 0 \to A,~ 0 \to B\}$) is not multistationary and also lacks unconditional ACR.

We end this subsection by considering two-species networks that are one-dimensional. 
Up to relabeling species, each such network is a subnetwork of exactly one of the following networks~$G_i$:
\begin{align}
\label{eq:4-networks}
G_1~&:=~ \{ 0 \leftrightarrows X_1+X_2 \}  \notag  \\
G_2~&:=~ \{X_1\leftrightarrows 2X_2\} \\
G_3~&:=~
	\{ 0 \leftrightarrows X_1 \leftrightarrows 2X_1 \leftrightarrows 0~, 
	~ X_2 \leftrightarrows X_1+X_2\}
 \notag \\
G_4 ~&:=~ \{ 2X_1 \leftrightarrows X_1+X_2 \leftrightarrows 2X_2 \leftrightarrows 2X_1 ~, ~ X_1 \leftrightarrows X_2 \}
\notag 
\end{align}

The next result, which is part~(2) of Theorem~\ref{thm:summary-2-species}, 
states that among the reversible subnetworks of the networks $G_i$ listed in~\eqref{eq:4-networks}, 
only one has unconditional ACR (namely, $\{X_2 \leftrightarrows X_1+X_2\}$).

\begin{proposition} \label{prop:2-species-cons-law} 
Let $G$ be a one-dimensional, reversible, bimolecular reaction network with exactly two species. Then the following are equivalent:
		\begin{enumerate}[(a)]
		\item $G$ has unconditional ACR;
		\item Up to relabeling species, $G$ is the (non-multistationary) network $\{X_2 \leftrightarrows X_1+X_2\}$.
		\end{enumerate}
\end{proposition}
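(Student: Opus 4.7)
The plan is to prove the two implications separately. The direction (b) $\Rightarrow$ (a) is a direct computation: for $G = \{X_2 \leftrightarrows X_1+X_2\}$ with rate constants $\kappa_1, \kappa_2$, the mass-action ODEs are $\dot{x}_1 = x_2(\kappa_1 - \kappa_2 x_1)$ and $\dot{x}_2 = 0$, so every positive steady state satisfies $x_1 = \kappa_1/\kappa_2$, giving unconditional ACR in $X_1$. Existence of a positive steady state in each compatibility class $\{x_2 = T > 0\}$ follows from the deficiency-zero theorem (Lemma~\ref{lem:thm:def-0}), since $G$ is weakly reversible with deficiency $2-1-1=0$; this also shows that $G$ is not multistationary.

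For (a) $\Rightarrow$ (b), the main tool is Lemma~\ref{lem:1-dim-reactants}: unconditional ACR in $X_{i^*}$ forces all reactant complexes to agree in every coordinate except the $i^*$-th. After relabeling if necessary, I may assume the ACR species is $X_1$, so every reactant complex of $G$ has the form $a X_1 + b X_2$ for a common $b \in \{0, 1, 2\}$ (bimolecularity restricts $b$ to these three values). A key observation is that reversibility implies that the set of complexes of $G$ coincides with the set of reactant complexes, so the constraint on $b$ propagates to all complexes of $G$.

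I then split into cases on $b$. If $b = 0$, every complex of $G$ lies in $\{0, X_1, 2X_1\}$, so the species $X_2$ does not appear in any complex, contradicting the standing assumption that every species appears in at least one complex. If $b = 2$, bimolecularity forces the unique reactant complex to be $2X_2$, so $G$ has only one complex and hence no (nontrivial) reactions, contradicting the hypothesis that $G$ has at least one reaction. If $b = 1$, the only bimolecular complexes with $X_2$-coefficient $1$ are $X_2$ and $X_1+X_2$, so $G$ must be a nontrivial reversible network on these two complexes, which forces $G = \{X_2 \leftrightarrows X_1+X_2\}$. Non-multistationarity of this network has already been established in the $(b)\Rightarrow(a)$ direction.

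There is no conceptually difficult step here: once Lemma~\ref{lem:1-dim-reactants} is invoked, bimolecularity and reversibility combine to leave exactly one candidate network, and the enumeration requires no analysis of the steady-state equations. The only subtlety is using reversibility carefully to ensure that the reactant-complex condition propagates to all complexes, which is what lets case $b=0$ be eliminated via the standing ``every species appears somewhere'' assumption rather than through a more delicate argument.
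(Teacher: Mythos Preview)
Your proof is correct, and it takes a cleaner route than the paper's.

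The paper proceeds by first enumerating the four maximal one-dimensional bimolecular networks in two species (the networks $G_1,\dots,G_4$ listed in~\eqref{eq:4-networks}) and then, for each, arguing case by case --- using Lemma~\ref{lem:1-dim-reactants} in most subcases --- that only the subnetwork $\{X_2\leftrightarrows X_1+X_2\}$ survives. You instead apply Lemma~\ref{lem:1-dim-reactants} once and then exploit reversibility to push the ``common $X_2$-coefficient'' constraint from reactant complexes to \emph{all} complexes; the single parameter $b\in\{0,1,2\}$ then replaces the paper's four-way split. Your argument is shorter and more structural, and it makes transparent exactly why the unique network emerges (bimolecularity leaves at most two complexes with $X_2$-coefficient~$1$). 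The paper's enumeration, on the other hand, has the small pedagogical benefit of exhibiting the full landscape of one-dimensional bimolecular two-species networks, which it reuses elsewhere (e.g., in the proof of Lemma~\ref{lem:1-d-infinite-steady-states}). One minor remark: in your case $b=2$, the cleanest justification is not ``at least one reaction'' (which the proposition does not state explicitly) but rather that a network with a single complex has stoichiometric subspace $\{0\}$, contradicting one-dimensionality.
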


\begin{proof}
Let $G$ be a two-species, one-dimensional, reversible, bimolecular reaction network.    
From the list~\eqref{eq:4-networks}, we know that $G$ is a subnetwork of one of $G_1$, $G_2$, $G_3$, and $G_4$.

Assume $G$ is a subnetwork of $G_1$, $G_2$, or $G_4$.  Then, 
$G \neq \{X_2 \leftrightarrows X_1+X_2\}$ and $G$ is not a subnetwork of $\{0 \leftrightarrows X_1 \leftrightarrows  2X_1 \leftrightarrows  0\}$.  So, it suffices to show $G$ does not have unconditional ACR.  

In networks $G_1$, $G_2,$ and $G_4$, the reactant and product complexes of every reaction differ in both species $X_1$ and $X_2$.  Also, all reactions in $G$ are reversible, so every complex of $G$ is a reactant complex. We conclude that $G$ has two reactant complexes that differ in both species, and hence, Lemma~\ref{lem:1-dim-reactants} implies that $G$ does not have unconditional ACR. 

We now consider the remaining case, when $G$ is a subnetwork of $G_3$.  
We write $G_3 = N_1 \cup N_2$, where $N_1:=\{0 \leftrightarrows X_1 \leftrightarrows  2X_1 \leftrightarrows  0 \}$ and $N_2:=\{X_2 \leftrightarrows X_1+X_2\}$.  
If $G=N_2$, the mass-action ODEs are $d{x}_1/dt =\kappa_1x_2-\kappa_2x_1x_2$ and $d{x}_2/dt =0$, and so $G$ has unconditional ACR in species $X_1$ with ACR-value $\tfrac{\kappa_1}{\kappa_2}$.  
If $G$ is a subnetwork of $N_1$, then $G$ has only one species (recall that every species of a network must take part in at least one reaction), which is a contradiction.

Our final subcase is when $G$ contains reactions from both $N_1$ and $N_2$.   
Then, from $N_2$, the complex $X_2$ 
is a reactant complex of $G$. 
Similarly, from $N_1$, at least one of $X_1$ and $2X_1$ is a reactant complex of $G$. 
Hence, $G$ contains two reactant complexes that differ in both species, $X_1$ and $X_2$. 
Therefore, Lemma~\ref{lem:1-dim-reactants} implies that  $G$ does not have unconditional ACR.

Finally, the fact that the network $\{X_2 \leftrightarrows X_1+X_2\}$ is
non-multistationary follows easily from the deficiency-zero theorem (Lemma~\ref{lem:thm:def-0}). 
\end{proof}

\subsubsection{Irreversible networks with two species}
In~\cite{MST}, the following network was called a ``degenerate-ACR network,'' because it has unconditional ACR and yet every positive steady state is degenerate:
\begin{align} \label{eq:degenerate-ACR-network}
	\{ A+B \to B,~A \to 2A \}~.
\end{align}
This degeneracy arises from the fact that a single (one-dimensional) stoichiometric compatibility class consists entirely of steady states~\cite[Example~2.12]{MST}.
The main result of this subsection, 
Theorem~\ref{thm:2species} below,
shows that only one additional
two-species network exhibits both ACR and multistationarity for a nonzero-measure set of rate constants; this network is 
obtained by adding to~\eqref{eq:degenerate-ACR-network}  the reaction $A \to 0$.  Both networks, therefore, are one-dimensional, two-species networks.

To prove Theorem~\ref{thm:2species}, we need the following lemma, which concerns the network in~\eqref{eq:degenerate-ACR-network} (and others as well).

\begin{lemma} \label{lem:degenerate-networks-2-species}
Let $G$ be a subnetwork of the network $\{X_1+X_2 \to X_2,~0 \leftarrow X_1 \to 2X_1\}$.  Then:
	\begin{enumerate}
	\item Every positive steady state (of \uline{every} mass-action system defined by $G$) is degenerate. 
	\item Let $\Sigma$ denote the 
	set of vectors of positive rate constants $\kappa$ 
for which the mass-action system $(G, \kappa)$ both has ACR and is multistationary. If $\Sigma$ has nonzero measure, then $G$ is one of the following networks:  
	$\{X_1+X_2 \to X_2,~X_1 \to 2X_1\}$ and 
	$\{X_1+X_2 \to X_2,~0 \leftarrow X_1 \to 2X_1\}$.
	\end{enumerate}

\end{lemma}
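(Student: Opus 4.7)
The approach exploits a common structural feature of every subnetwork $G$ of the full network $\{X_1+X_2\to X_2,~0\leftarrow X_1\to 2X_1\}$: species $X_2$ is catalyst-only in the reaction $X_1+X_2\to X_2$ and does not appear in the other two reactions. Hence, for every such $G$ and every vector of positive rate constants $\kappa$, the mass-action ODE for $X_2$ vanishes identically (so $x_2=T$ is a conservation law), while the ODE for $X_1$ factors as
\[
f_1(x_1,x_2) ~=~ x_1 \, \ell(x_2), \qquad \ell(x_2) ~:=~ \kappa_3 - \kappa_2 - \kappa_1 x_2,
\]
under the convention that $\kappa_1$, $\kappa_2$, $\kappa_3$ are set to $0$ when the reactions $X_1+X_2\to X_2$, $X_1\to 0$, $X_1\to 2X_1$ are respectively absent from $G$.

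For part~(1), any positive steady state $(x_1^*,x_2^*)$ of $(G,\kappa)$ must satisfy $\ell(x_2^*)=0$. Because $\ell$ depends only on $x_2$, the entire positive portion of the one-dimensional stoichiometric compatibility class $\{x_2=x_2^*\}$ consists of positive steady states. This positive-dimensional family of steady states sitting inside a one-dimensional SCC forces every such steady state to be degenerate, by the Inverse Function Theorem argument already used in the proof of Proposition~\ref{prop:1-d-no-coexistence}, Case~(b).

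For part~(2), I will enumerate the seven nonempty subnetworks and identify $\Sigma$ in each case. The three single-reaction networks each yield an $\ell$ of fixed sign on $\R_{>0}$ (namely $-\kappa_1 x_2<0$, $-\kappa_2<0$, or $\kappa_3>0$), so no positive steady state ever exists and $\Sigma=\emptyset$. The subnetwork $\{X_1+X_2\to X_2,~X_1\to 0\}$ has $\ell=-\kappa_1 x_2-\kappa_2<0$ throughout $\R_{>0}$, so again $\Sigma=\emptyset$. The subnetwork $\{X_1\to 0,~X_1\to 2X_1\}$ has $\ell=\kappa_3-\kappa_2$ independent of $x_2$; positive steady states exist only when $\kappa_2=\kappa_3$, so $\Sigma$ is contained in a hyperplane and has measure zero. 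The remaining two subnetworks, $\{X_1+X_2\to X_2,~X_1\to 2X_1\}$ and the full three-reaction network, have $\ell=\kappa_3-\kappa_1 x_2$ and $\ell=\kappa_3-\kappa_2-\kappa_1 x_2$ respectively, each with a positive root $x_2^*$ on, respectively, all of $\R^2_{>0}$ and the open half-space $\{\kappa_3>\kappa_2\}\subset\R^3_{>0}$. In each case the SCC $\{x_2=x_2^*\}$ consists entirely of positive steady states, yielding ACR in $X_2$ together with multistationarity, so $\Sigma$ has nonzero Lebesgue measure. These are the only two subnetworks with this property, as claimed.

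The main obstacle is the bookkeeping of the seven nonempty subnetworks together with the verification that $\{X_1\to 0,~X_1\to 2X_1\}$ falls on the measure-zero side of the dichotomy; once the factorization $f_1=x_1\,\ell(x_2)$ is in hand, every step reduces to asking when the affine polynomial $\ell$ has a positive root on an open subset of rate-constant space.
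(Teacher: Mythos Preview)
Your proposal is correct and follows essentially the same approach as the paper: compute $f_1=x_1\,\ell(x_2)$ and $f_2=0$, then case-split over subnetworks. The paper abbreviates by first observing that positive steady states force the reaction $X_1\to 2X_1$ to be present (since otherwise $\ell<0$), reducing seven cases to three, and it notes that for $\{0\leftarrow X_1\to 2X_1\}$ one actually has $\Sigma=\emptyset$ (not merely measure zero, since on the hyperplane $\kappa_2=\kappa_3$ every positive point is a steady state and hence no species has ACR); but your weaker conclusion suffices for the lemma.
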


\begin{proof}
This result is straightforward to check by hand, so we only outline the steps, as follows.  
Assume $G$ is a subnetwork of $\{X_1+X_2 \overset{k}\to X_2,~0 \overset{\ell}\leftarrow X_1 \overset{m}\to 2X_1\}$.  If $G$ admits a positive steady state, $G$ must 
contain the reaction $X_1  \overset{m}\to 2X_1$.  Hence, there are three subnetworks to consider:
	\begin{enumerate}
	\item If $G= \{0 \overset{\ell}\leftarrow X_1  \overset{m}\to 2X_1\}$, then $\Sigma$ is empty.
	\item If $G= \{X_1+X_2 \overset{k}\to X_2,~ X_1  \overset{m}\to 2X_1\}$, then $\Sigma = \{ (k,m) \in \mathbb{R}^2_{>0}  \}$.
	\item If $G= \{X_1+X_2 \overset{k}\to X_2,~0 \overset{\ell}\leftarrow X_1  \overset{m}\to 2X_1\}$, then $\Sigma = \{ (k,\ell,m) \in \mathbb{R}^3_{>0} \mid m > \ell \}$.
	\end{enumerate}
In cases (2) and (3), the set $\Sigma$ has nonzero measure.  Finally, for all three of these networks, every positive steady state is degenerate
(some of these 
networks are also covered by Lemma~\ref{lem:1-d-infinite-steady-states}).
\end{proof}

\begin{theorem}\label{thm:2species} 
Let $G$ be a bimolecular reaction network with exactly two species, $X_1$ and $X_2$. 
Let $\Sigma$ denote the set of vectors of positive rate constants $\kappa$ 
for which the mass-action system $(G, \kappa)$ both has ACR in species $X_2$ and is multistationary.  Then:
\begin{enumerate}
	\item For every $\kappa^* \in \Sigma$, every positive steady state of $(G, \kappa^*)$ is degenerate.
	\item If $\Sigma$ has nonzero measure, then  
	$G$ is one of the following networks:  
	$\{X_1+X_2 \to X_2,~X_1 \to 2X_1\}$ and 
	$\{X_1+X_2 \to X_2,~0 \leftarrow X_1 \to 2X_1\}$.
\end{enumerate}
\end{theorem}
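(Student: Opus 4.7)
The plan is to split both parts according to the dimension of $G$: with two species, $G$ is either one-dimensional or full-dimensional.

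\textbf{One-dimensional case.} If $G$ is one-dimensional and multistationary, then combining Lemma~\ref{lem:1-dim-no-mss} (which rules out nondegenerate multistationarity) with Lemma~\ref{lem:lin-tang-zhang}(2) forces ${\rm cap}_{pos}(G)=\infty$, and Lemma~\ref{lem:1-d-infinite-steady-states} then yields (a) degeneracy of every positive steady state (part~(1) in this case) and (b) a classification of $G$ -- three two-species candidates. A short ODE computation rules out $\{2X_1\leftarrow X_1+X_2\to 2X_2\}$ (positive steady states exist only when its two rate constants coincide, in which case the full positive orthant consists of steady states and no ACR in $X_2$ holds), leaving exactly $N_1$ and $N_2$ for part~(2).

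\textbf{Full-dimensional case, part~(1).} Fix $\kappa^*\in\Sigma$ with ACR-value $\alpha>0$. Multistationarity produces two positive steady states $(x_1',\alpha)$ and $(x_1'',\alpha)$ with $x_1'\neq x_1''$. The univariate polynomials $f_1(x_1,\alpha)$ and $f_2(x_1,\alpha)$ then share multiple positive roots in $x_1$; Lemma~\ref{lem:bimol-Descartes} bounds each by one positive root when nonzero, so both must be identically zero. Hence $(x_2-\alpha)$ divides both $f_1$ and $f_2$ in $\mathbb{R}[x_1,x_2]$, the steady-state locus contains the one-parameter family $\{(x_1,\alpha):x_1>0\}$, and the Inverse Function Theorem forces every positive steady state to be degenerate.

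\textbf{Full-dimensional case, part~(2).} I will argue that ``$\Sigma$ has positive measure'' leads to a contradiction. Writing $f_1=(x_2-\alpha)(ax_1+bx_2+c)$ and applying Lemma~\ref{lem:hungarian} (negative-coefficient monomials of $f_1$ must be divisible by $x_1$) forces $b=c=0$, so $f_1=ax_1(x_2-\alpha)$. The coefficients of $x_1^2$, $x_2$, $x_2^2$, and $1$ in $f_1$ therefore vanish on $\Sigma$; since they are linear in $\kappa$ and $\Sigma$ has positive measure, they vanish identically in $\kappa$, structurally restricting $G$ (no reactions with reactant $2X_1$; reactions with reactants $X_2$, $2X_2$, or $0$ have products that do not change the $X_1$-count). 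The analogous divisibility of $f_2$ by the \emph{same} $(x_2-\alpha)$ forces the $x_1^2$-coefficient of $f_2$ to vanish identically and, using that the ratio determining $\alpha$ from $f_1$ must also determine it from $f_2$, produces two further polynomial identities in $\kappa$ that must hold identically -- a quadratic identity pairing $X_1$- with $(X_1+X_2)$-reactant reactions, and a cubic identity involving the $0$-, $X_2$-, and $2X_2$-reactant reactions.

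\textbf{The main obstacle} will be the combinatorial case analysis for these two identities. The quadratic identity restricts the coexisting $X_1$- and $(X_1+X_2)$-reactant reactions in $G$ to a short list of compatible pairs, each of whose reaction vectors turn out to be parallel. The cubic identity then forces the absence of $0$-, $X_2$-, and $2X_2$-reactant reactions in $G$. What remains has only parallel reaction vectors, so $G$ is one-dimensional -- contradicting full-dimensionality. Combining the dimensional cases yields $G\in\{N_1,N_2\}$, completing the proof.
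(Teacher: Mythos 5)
Your top-level split by dimension is a genuinely different organization from the paper's (which instead partitions the rate-constant set according to whether $f_{\kappa,1}=0$ and then subdivides on the behavior of $f_{\kappa,2}$), and your reduction of the one-dimensional case to Lemmas~\ref{lem:1-dim-no-mss}, \ref{lem:lin-tang-zhang}(2) and~\ref{lem:1-d-infinite-steady-states} is correct and clean. However, there is a genuine gap in the full-dimensional case, at the step where you conclude that $f_2|_{x_2=\alpha}$ is identically zero ``by Lemma~\ref{lem:bimol-Descartes}.'' That lemma controls $f_i$ as a polynomial in \emph{its own} variable $x_i$: the sign pattern it exploits (non-positive $x_i^2$-coefficient, non-negative constant term) comes from Lemma~\ref{lem:hungarian} applied to species $X_i$. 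For $f_2$ viewed as a polynomial in $x_1$ after substituting $x_2=\alpha$, the signs go the wrong way: the $x_1^2$-coefficient of $f_2$ is \emph{non-negative} (a reaction with reactant $2X_1$ can only increase $X_2$), while the constant term involves the $x_2$- and $x_2^2$-coefficients of $f_2$, which Lemma~\ref{lem:hungarian} allows to be negative. So Descartes gives no bound here, and a priori $f_2|_{x_2=\alpha}$ could be a quadratic in $x_1$ with two positive roots. The conclusion you want is still true, but for a structural reason: once $f_1|_{x_2=\alpha}\equiv 0$, Proposition~\ref{lem:when-evaluated-ODE-is-0} (or Proposition~\ref{lem:when-ODE-is-0} in the case $f_1\equiv 0$, which your write-up should separate out, since your factorization $f_1=ax_1(x_2-\alpha)$ and the ACR-value ratio degenerate there) restricts the reactions in which $X_1$ is non-catalyst-only to a list containing no reactant $2X_1$, and bimolecularity rules out $2X_1$ as the reactant of a nontrivial catalyst-only reaction; hence $f_2|_{x_2=\alpha}$ has degree at most $1$ in $x_1$, and two positive roots force it to vanish. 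This is exactly how the paper argues this point.

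Beyond that, your part~(2) endgame --- clearing denominators in the two identities, forcing them to hold identically in $\kappa$ by positive measure, and deriving that $G$ would be one-dimensional --- is a plausible but unexecuted plan; you flag the combinatorial case analysis yourself as the main obstacle. Note that the paper avoids most of it: to conclude that $\Sigma$ has measure zero it suffices to exhibit \emph{one} nontrivial polynomial constraint on $\kappa$ (the paper's equations~\eqref{eq:coeffs=0}), rather than to classify all networks for which every such constraint is trivially satisfied. Adopting that shortcut would shrink the remaining work considerably.
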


\begin{proof}  
Assume that $G$ is bimolecular and has exactly two species.  
If $\Sigma$ is empty (for instance, if $G$ has no reactions), then there is nothing to prove.	
Accordingly, assume that $\Sigma$ is nonempty (and in particular $G$ has at least one reaction). 

We first claim that $G$ has a reaction in which $X_1$ is a non-catalyst-only species.  To prove this claim, assume for contradiction that $X_1$ is a catalyst-only species.  Then the stoichiometric compatibility classes are defined by the equations $x_1=T$, for $T>0$ (we are also using the fact that $G$ has at least one reaction).  But this does not allow for multistationarity and ACR in $X_2$ to coexist, because two positive steady states in the same compatibility class would have the form $(T,y)$ and $(T,z)$, with $y\neq z$, which contradicts the assumption of ACR in $X_2$.
So, the claim holds.

For an arbitrary vector $\kappa$ of positive rate constants, let $f_{\kappa,1}$ and $f_{\kappa,2}$ denote the right-hand sides (for species $X_1$ and $X_2$, respectively) of the mass-action ODE system of $(G,\kappa)$. 
Consider the following partition of $\Sigma$:
\begin{align*} 
	\Sigma ~=~ \left( \Sigma \cap \{\kappa \mid f_{\kappa,1}= 0\} \right)
		\cup 
		\left( \Sigma \cap \{\kappa \mid f_{\kappa,1} \neq 0\} \right)
	~=:~ \Sigma_0 \cup \Sigma_1~.
\end{align*}
 
By construction, $\Sigma_0\cap \Sigma_1=\emptyset$. We first analyze $\Sigma_0$. 
If $\Sigma_0$ is empty, then skip ahead to our analysis of $\Sigma_1$.  Accordingly, assume $\Sigma_0$ is nonempty, and let $\kappa^* \in \Sigma_0$.  We must show that every positive steady state of $(G,\kappa^*)$ is degenerate.

We claim that $G$ is two-dimensional (assuming that $\Sigma_0$ is nonempty).
We prove this claim as follows.  We saw that $G$ contains 
 a reaction in which $X_1$ is a non-catalyst-only species, so 
  Proposition~\ref{lem:when-ODE-is-0}
implies that for $j=0$ or $j=2$ (or both, where we are using Notation~\ref{notation:dummy-variable}) 
our network 
$G$ contains the reaction 
$X_1+X_j \to 2X_1$ 
and at least one reaction of the form $X_1+X_j \to \star$, where $\star$ is a complex not involving $X_1$.  
Consider the subcase $j=0$.  If some $\star$ involves $X_2$, then $G$ contains $X_1 \to 2X_1$ and $X_1 \to \star$, which yield linearly independent reaction vectors and so $G$ is two-dimensional.  If none of the complexes $\star$ involve $X_2$, then 
$G$ must contain additional reactions in which $X_2$ is not a catalyst-only species (to avoid $f_2=0$), and so again $G$ is two-dimensional.  
The subcase $j=2$ is similar.

Next, as $G$ is two-dimensional and $f_{\kappa^*,1}= 0$, Corollary~\ref{cor:degeneracy} implies that every positive steady state of $(G,\kappa^*)$ is degenerate, as desired.
Additionally, as $X_1$ is a non-catalyst-only species and (for all $\kappa \in \Sigma_0$) $f_{\kappa,1}=0$, 
Proposition~\ref{lem:when-ODE-is-0} implies that there is a nontrivial linear relation that every $\kappa \in \Sigma_0$ satisfies. Hence, $\Sigma_0$ has zero measure.   
 
To complete the proof, it suffices to show the following about the set $\Sigma_1$: (1) 
For every $\kappa^* \in \Sigma_1$, every positive steady state of $(G, \kappa^*)$ is degenerate; and (2) If $\Sigma_1$ has nonzero measure, then 
	$G=\{X_1+X_2 \to X_2,~X_1 \to 2X_1\}$ or
	$G=\{X_1+X_2 \to X_2,~0 \leftarrow X_1 \to 2X_1\}$.

Assume $\Sigma_1$ is nonempty (otherwise, there is nothing to prove).
We introduce the following notation: for $\widetilde \kappa \in \Sigma_1$, let $\beta(\widetilde \kappa)$ denote the ACR-value for $X_2$.  

We now claim the following:
For every $\widetilde \kappa \in \Sigma_1$, 
	the univariate polynomial $f_{\widetilde \kappa,_1}|_{x_2=\beta(\widetilde \kappa) }$ is the zero polynomial.  	
	To verify this claim, we first note that
	$f_{\widetilde \kappa,_1}|_{x_2=\beta(\widetilde \kappa) }$ has at least two positive roots (as $(G, \widetilde \kappa)$ is multistationarity), 
	so the polynomial $f_{\widetilde \kappa,_1}|_{x_2=\beta(\widetilde \kappa) }$, if nonzero, must have at least two sign changes (by Descartes' rule of signs).
	 However, by Lemma~\ref{lem:bimol-Descartes}, the polynomial $f_{\widetilde \kappa,_1}|_{x_2=\beta(\widetilde \kappa) }$ has at most one sign change, and so the claim holds.  

We now know that for every $\kappa^* \in \Sigma_1$, we have $f_{\kappa^*,_1} \neq 0$, but $f_{ \kappa^*,_1}|_{x_2=\beta(\widetilde \kappa^*) }=0$.  Hence, 
	$G$ has at least one reaction in which $X_1$ is a non-catalyst-only reaction and (by Proposition~\ref{lem:when-evaluated-ODE-is-0}) every such reaction must be one of the $8$ reactions displayed here:
	\begin{align} \label{eq:only-possible-rxns}
	0 \overset{ \kappa_{4,1} } \longleftarrow X_1 \overset{\kappa_1} \longrightarrow  2X_1  
	\overset{\kappa_2} \longleftarrow X_1+X_2 \overset{\kappa_{3,1} } \longrightarrow 0 ~, \quad 
	X_2 \overset{\kappa_{4,2}} \longleftarrow X_1 \overset{\kappa_{4,3}} \longrightarrow 2X_2~,
	\quad  
	X_2 \overset{\kappa_{3,2}} \longleftarrow X_1+X_2 \overset{\kappa_{3,3}} \longrightarrow 2X_2	
	\end{align}

For every $\kappa^* \in \Sigma_1$, Proposition~\ref{lem:when-evaluated-ODE-is-0} yields
the following ACR-value formula:

		\begin{align} \label{eq:beta-value}
		\beta(\kappa^*) ~=~ \frac{ \kappa^*_{4 \bullet } - \kappa^*_1}{\kappa^*_2 - \kappa^*_{3 \bullet} }~, 
	\end{align}
	where 
	$\kappa^*_{3 \bullet } := \kappa^*_{3, 1 } + \kappa^*_{3, 2 }  + \kappa^*_{3, 3 } $
	and 
	$\kappa^*_{4 \bullet }:=\kappa^*_{4, 1 } + \kappa^*_{4, 2 }  + \kappa^*_{4, 3 } $.
	For reactions in~\eqref{eq:only-possible-rxns} that are not in~$G$, the corresponding rate constants,  $\kappa^*_i$ or $\kappa^*_{ij}$, are set to 0. 

Next, the possible reactions in which $X_1$ is a catalyst-only species are as follows:
	\begin{align} \label{eq:only-possible-cat-rxns}
	0 \overset{\kappa_5} {\underset{\kappa_6} \rightleftarrows}  X_2
		\overset{\kappa_7} {\underset{\kappa_8} \rightleftarrows} 2X_2
		\overset{\kappa_9} {\underset{\kappa_{10}} \rightleftarrows}  0~, \quad \quad 
	X_1 \overset{\kappa_{11}} {\underset{\kappa_{12}} \rightleftarrows}  X_1+ X_2
	\end{align}

We proceed by considering three subcases, based in part on whether $f_{\kappa,2}$ (which is a polynomial in the unknowns $x_1$, $x_2$, and $\kappa$) is zero: 
	 \begin{itemize}
	 \item[(a)] $f_{\kappa,2}=0$, and $X_2$ is a catalyst-only species in every reaction of $G$,
	 \item[(b)] $f_{\kappa,2}=0$, and $X_2$ is a non-catalyst-only species in some reaction of $G$, or
	 \item[(c)] $f_{\kappa,2} \neq 0$.
	 \end{itemize}

	 We first consider subcase (a).  
	 By inspecting reactions in~(\ref{eq:only-possible-rxns}) and (\ref{eq:only-possible-cat-rxns}), we conclude that $G$ must be a subnetwork of  $\{X_1+X_2 \to X_2,~0 \leftarrow X_1 \to 2X_1\}$.  This subcase is done by Lemma~\ref{lem:degenerate-networks-2-species}.

Next, we examine subcase (b). 
Let $G_1:=\{X_1+X_2 \to X_2,~0 \leftarrow X_1 \to 2X_1\}$, $G_2:= \{0\leftarrow X_2 \to 2X_2\}$, and 
$G_3:=\{0\leftarrow X_1+X_2\to 2X_2,~ X_1 \leftarrow X_1+ X_2 \to 2X_1 \}$.
By Proposition~\ref{lem:when-ODE-is-0} 
(and by inspecting reactions in~(\ref{eq:only-possible-rxns}) and (\ref{eq:only-possible-cat-rxns})), $G$ must be a subnetwork of $G_1\cup G_2 \cup G_3$ with at least one reaction in $G_2 \cup G_3$.
Moreover, there is a nontrivial linear relation in the rate constants that holds for all $\kappa \in \Sigma_1$.
It follows that $\Sigma_1$ is contained in the hyperplane defined by this linear relation and hence has zero measure. 

Let $\kappa^* \in \Sigma_1$. 
By examining $G_1 \cup G_2 \cup G_3$, we see that the possible reactants of $G$ are $X_1,~X_2,~X_1+X_2$.  Next, $G$ has at least $2$ reactants (as otherwise, Proposition~\ref{prop:2-or-3-rxns3species} would imply that $G$ admits no positive steady states).
Hence, by inspection, $G$ either is full-dimensional or is a subnetwork of $\{ 0 \leftarrow X_2 \to 2X_2,~ X_1+X_2 \to X_1\}$, which we already saw in Example~\ref{ex:generalized-degenerate-network} (where $A=X_2$ and $B=X_1$) has ACR in $X_1$ but not in $X_2$ (and the analysis of its subnetworks is similar).   Hence, $G$ is full-dimensional, and so
Corollary~\ref{cor:degeneracy} (and the fact that $f_{\kappa^*,2}=0$) implies that every positive steady state of $(G,\kappa^*)$ is degenerate.

Consider subcase (c).  
Let $\kappa^* \in \Sigma_1$ (so, in particular, $f_{\kappa^*,2} \neq 0$).  We claim that $f_{\kappa^*,2}|_{x_2=\beta(\kappa^*)} = 0$.  To see this, observe that, in the reactions~(\ref{eq:only-possible-rxns}) and (\ref{eq:only-possible-cat-rxns}), the complex $2X_1$ appears only as a product, never as a reactant.  
	 Hence, $f_{\kappa^*,2}|_{x_2=\beta(\kappa^*)} $ (which is a univariate polynomial in $x_1$) has degree at most 1. 
	However, the fact that $(G,\kappa^*)$ is multistationary implies that $f_{\kappa^*,2}|_{x_2=\beta(\kappa^*)} $ has two or more positive roots.  
	Hence, $f_{\kappa^*,2}|_{x_2=\beta(\kappa^*)}$ is the zero polynomial.

	Now we show that every positive steady state of $(G,\kappa^*)$ is degenerate.  Such a steady state has the form $(p,\beta)$, and we also know that 
	$f_{\kappa^*,1}|_{x_2=\beta(\kappa^*)} = f_{\kappa^*,2}|_{x_2=\beta(\kappa^*)}=0$.  
	 Hence, $(x_2-\beta(\kappa^*))$ divides both $f_{\kappa^*,1}$ and $f_{\kappa^*,2}$. Consequently, the derivatives of $f_{\kappa^*,1}$ and $f_{\kappa^*,2}$ with respect to $x_1$ at $(p,\beta)$ are both zero. It follows that the first column of the $2 \times 2$ Jacobian matrix, when evaluated at $(p,\beta)$, is the zero column.  Hence, if the stoichiometric subspace of $G$, which we denote by $S$, is two-dimensional, then $(p,\beta)$ is degenerate.

	We now assume  
	$\dim(S)=1$ (and aim to reach a contradiction).
	 Recall that $G$ contains at least one reaction from those in~\eqref{eq:only-possible-rxns}, so in order for $\dim(S)=1$ it must be that $G$ contains no reaction from~\eqref{eq:only-possible-cat-rxns}.  Hence, from the expression for $f_2$ (which we know is not zero), in~\eqref{eq:f_2}, the only possible reactions  in $G$ are the ones labeled by
	 $\kappa_2 ,  \kappa_{3,3}, \kappa_{4,2},\kappa_{4,3}$.  Hence, the one-dimensional network $G$ is either the network $\{ X_1 \overset{\kappa_{4,3}}\longrightarrow 2X_2 \}$ or a subnetwork of $\{2X_1  
	\overset{\kappa_2} \longleftarrow X_1+X_2 \overset{\kappa_{3,3} } \longrightarrow 2X_2, ~ X_1 \overset{\kappa_{4,2} } \longrightarrow X_2 \}$. 
	Now it is straightforward to check that $G$ is not multistationary, which is a contradiction.

	To complete the proof, it suffices to show that, in subcase~(c), the set $\Sigma_1$ has measure zero.
	Accordingly, let $\kappa \in \Sigma_1$.  
	As noted earlier, the ACR-value of $X_2$ in $(G,\kappa)$ is $\beta(\kappa) = \frac{ \kappa_{4 \bullet } - \kappa_1}{\kappa_2 - \kappa_{3 \bullet} }$.	
	From~\eqref{eq:only-possible-rxns} and~\eqref{eq:only-possible-cat-rxns}, 
	the right-hand side of the mass-action ODE for $(G,\kappa)$
	has the following form (with rate constants set to $0$ for reactions not in $G$):
	\begin{small}
	\begin{align} \label{eq:f_2}
		f_{\kappa,2} ~=~ 
			( \kappa_{3,3} - \kappa_2 - \kappa_{12}) x_1 x_2
			+ (\kappa_{11} + \kappa_{4,2} + 2 \kappa_{4,3}) x_1
			- (\kappa_8 + 2 \kappa_9) x_2^2
			+ (\kappa_7 - \kappa_6) x_2 
			+ (\kappa_5 + 2 \kappa_{10})~.
	\end{align}
	\end{small}
	\noindent By assumption, at least one of the rate constants 
	(the $\kappa_i$ and $\kappa_{i,j}$) 
	in~\eqref{eq:f_2} is nonzero. 
	By our earlier arguments,  
	at the beginning of subcase (c), we conclude that $f_{\kappa,2}|_{x_2=\beta(\kappa)} = 0$.
	Hence, the linear and constant terms of 
	$f_{\kappa,2}|_{x_2=\beta(\kappa)} $ are both $0$, which, using~\eqref{eq:f_2}, translates as follows: 
	\begin{align} \label{eq:coeffs=0}
	( \kappa_{3,3} - \kappa_2 - \kappa_{12}) \frac{ \kappa_{4 \bullet } - \kappa_1}{\kappa_2 - \kappa_{3 \bullet} }
			+ (\kappa_{11} + \kappa_{4,2} + 2 \kappa_{4,3}) 
			~&=~ 0 
			\quad 
			{\rm and} \\
			\notag
			- (\kappa_8 + 2 \kappa_9) \left( \frac{ \kappa_{4 \bullet } - \kappa_1}{\kappa_2 - \kappa_{3 \bullet} } \right)^2
			+ (\kappa_7 - \kappa_6) \frac{ \kappa_{4 \bullet } - \kappa_1}{\kappa_2 - \kappa_{3 \bullet} } 
			+ (\kappa_5 + 2 \kappa_{10}) ~&=~0~.	
	\end{align}
It follows that $\Sigma_1$ is constrained by the equations~\eqref{eq:coeffs=0}, at least one of which is nontrivial.  
 Hence, $\Sigma_1$ is contained in a hypersurface and so has measure zero.
\end{proof}

\subsection{Bimolecular networks with at least three species} \label{sec:3-species}
In the previous subsection, we showed that a bimolecular network must have at least $3$ species in order for ACR and nondegenerate multistationarity to coexist. Consequently, this subsection focuses on bimolecular networks with 
at least $3$ species. 
We prove that the coexistence of ACR and nondegenerate multistationarity requires a minimum of $3$ reactant complexes and a minimum of $5$ complexes (Theorem~\ref{thm:not_fulldim}).  
The remainder of this subsection focuses on a family of networks with $n$ species and $n$ reactants,
for which ACR and nondegenerate multistationarity coexist
(Section~\ref{sec:cons-law}), and then analyzes full-dimensional networks with $3$ species (Section~\ref{sec:full-d-3-species}).

\begin{theorem}[Minimum number of complexes] \label{thm:not_fulldim}
Let $G$ be a bimolecular reaction network with at least 3 species. If there exists a vector of positive rate constants $\kappa^*$ such that $(G,\kappa^*)$  has ACR and is nondegenerately multistationary, then: 
\begin{enumerate}
\item $G$ has at least 3 reactant complexes (and hence, at least 3 reactions), and
\item $G$ has at least 5 complexes (reactant and product complexes).
\end{enumerate}
\end{theorem}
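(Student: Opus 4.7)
The plan is to prove both parts by contradiction, fixing a positive rate vector $\kappa^*$ such that $(G,\kappa^*)$ has ACR and is nondegenerately multistationary, and performing case analysis on the number of reactant complexes $j$ (for Part~(1)) and the total number of complexes $m$ (for Part~(2)). The workhorse tools will be the matrix-rank analysis of Section~\ref{sec:ODE-reactants}, Proposition~\ref{prop:min-num-reac-with-k-cons-law}, Lemma~\ref{lem:1-dim-no-mss}, and the deficiency theorems (Lemmas~\ref{lem:thm:def-0} and~\ref{lem:thm:def-1}).

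For Part~(1), I would rule out $j\in\{0,1,2\}$. The cases $j=0$ (no reactions, hence no positive steady state) and $j=1$ (Proposition~\ref{prop:2-or-3-rxns3species}(1)) are immediate. For $j=2$, write $\dot x = N\vec m$ with $N\in\R^{n\times 2}$ and $\vec m=(m_1,m_2)^{\top}$. If $\operatorname{rank}(N)=2$, two independent row equations force $m_1=m_2=0$, which is impossible at a positive point. If $\operatorname{rank}(N)\le 1$, all $f_i$ are scalar multiples of one polynomial, so the Jacobian has rank at most $1$ everywhere; then nondegeneracy fails by Proposition~\ref{prop:2-or-3-rxns3species}(2) when $G$ is full-dimensional, by Proposition~\ref{prop:min-num-reac-with-k-cons-law} (applied with $j=2\le n-k$) when $\dim S\in[2,n-1]$, and by Lemma~\ref{lem:1-dim-no-mss} when $\dim S=1$.

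For Part~(2), Part~(1) already gives $m\ge 3$, so only $m\in\{3,4\}$ need to be ruled out. Since every complex appears in some reaction, each linkage class contains at least two complexes. For $m=3$: one must have $\ell=1$ and $s\le 2$, giving $\delta=3-1-s\in\{0,1\}$; when $s=2$, Lemma~\ref{lem:thm:def-0} precludes multistationarity, and when $s=1$, $G$ is one-dimensional and Lemma~\ref{lem:1-dim-no-mss} applies. For $m=4$: $\ell\in\{1,2\}$; every pair $(\ell,s)$ other than $(1,2)$ yields either $\delta=0$ (handled by Lemma~\ref{lem:thm:def-0}) or $s=1$ (handled by Lemma~\ref{lem:1-dim-no-mss}). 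The remaining case $(\ell,s)=(1,2)$ has $\delta=1$, and if $G$ is weakly reversible then the single linkage class has $\delta_1=\delta=1$, so Lemma~\ref{lem:thm:def-1} gives a unique positive steady state in every stoichiometric compatibility class.

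The main obstacle will be the remaining subcase of $m=4$, $\ell=1$, $s=2$, $\delta=1$, with $G$ not weakly reversible. My plan is to enumerate the non-weakly-reversible directed-graph topologies on four bimolecular complexes that lie in a $2$-affine plane (forced by $s=2$), involve at least three species, and have at least three reactants (Part~(1)). For each candidate I would combine the rank analysis of $N$ with the ACR-reaction constraints from Proposition~\ref{lem:when-evaluated-ODE-is-0}, together with the single conservation law coming from $\dim S=n-2\ge 1$, which tends to cut the positive steady-state variety down to one point per compatibility class, to rule out coexistence of ACR and nondegenerate multistationarity. The enumeration is finite but delicate, and my hope is to find a unifying structural argument---perhaps via Feinberg's deficiency-one algorithm, or by showing that non-weak-reversibility forces a terminal strong linkage class through which flux leaks so that no positive steady state survives at the required $\kappa^*$---to avoid a lengthy case-by-case check.
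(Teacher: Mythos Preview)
Your Part~(1) is correct but takes a different, cleaner route than the paper. The paper argues that with only two reactant complexes the $f_i$ are all scalar multiples of a single binomial, factors that binomial as $(\alpha-x_1)(\beta_0+\beta_1x_1+\cdots+\beta_nx_n)$ using the ACR hypothesis and bimolecularity, and then does a case split on how many $\beta_j$ are nonzero---ultimately reducing to an effectively-two-species system and invoking Theorem~\ref{thm:2species}. Your rank dichotomy on $N$ together with Propositions~\ref{prop:2-or-3-rxns3species}(2), \ref{prop:min-num-reac-with-k-cons-law} and Lemma~\ref{lem:1-dim-no-mss} bypasses all of that and never needs the ACR hypothesis at this stage; it is a genuine simplification.

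For Part~(2) you are following exactly the paper's strategy, but you are being more scrupulous than the paper is. The paper's entire argument is: Proposition~\ref{prop:1-d-no-coexistence} forces $\dim S\ge 2$, hence $\delta=m-\ell-\dim S\le 4-1-2=1$, and ``the latter requires $G$ to have exactly one linkage class''; it then writes ``Now Lemmas~\ref{lem:thm:def-0}--\ref{lem:thm:def-1} imply that $G$ is not multistationary'' and stops. In particular, the paper does \emph{not} separately treat the non--weakly-reversible $\delta=1$ subcase that you flag as the main obstacle; it simply cites Lemma~\ref{lem:thm:def-1} even though that lemma, as stated in the paper, carries a weak-reversibility hypothesis. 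So the enumeration you propose goes well beyond what the paper actually does. If you want to match the paper's level of detail, the intended reading is that the deficiency-one theorem in its general form (requiring only one terminal strong linkage class per linkage class, not weak reversibility) is being invoked; your careful case split is not wrong, just more work than the authors chose to display.
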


\begin{proof} 
We first prove part (1). Let $(G,\kappa^*)$ be as in the statement of the theorem and let $n$ denote the number of species, where $n \geq 3$.  
By relabeling species, if needed, we may assume that $(G,\kappa^*)$ has ACR in species $X_1$. 
Let $f_1,f_2,\dots, f_n$ denote the right-hand sides of the mass-action ODEs of $(G,\kappa^*)$.
As $(G,\kappa^*)$ has ACR, we know that at least one of the right-hand sides is nonzero.  Let $f_i$ denote one of these nonzero polynomials.

Assume for contradiction that $G$ has only $1$ or $2$ reactant complexes. Since $G$ admits a nondegenerate positive steady state,
 Proposition~\ref{prop:2-or-3-rxns3species} implies that $G$ is not full-dimensional and 
 $G$ has exactly $2$ reactant complexes. 

We claim that all the right-hand sides $f_{\ell}$ are scalar multiples of each other. 
More precisely, we claim that for all $j \in \{1,2,\dots,n\} \smallsetminus \{i\}$, there exists $c_j \in \mathbb{R}$ such that $f_j=c_j f_i$.
Indeed, each $f_j$ has at most two monomials (because $G$ has exactly two reactant complexes), so if $f_j$ is not a constant multiple of $f_i$, then some $\mathbb{R}$-linear combination of $f_i$ and $f_j$ is a monomial and hence $(G,\kappa^*)$ has no positive steady state (which is a contradiction).

Thus the positive steady states of $(G,\kappa^*)$ are precisely the positive roots of $f_i=0$ and the linear equations given by the conservation laws. 
Since $X_1$ is the ACR species and $G$ is bimolecular, we must have $f_i = (\alpha - x_1) (\beta_0 + \beta_1 x_1 + \dots + \beta_n x_n)$, 
where $\alpha$ is the (positive) ACR-value and $\beta_j \in \mathbb{R}$ for all $j=0,1,\dots,n$. 

We consider several cases, based on how many of the coefficients $\beta_2,\beta_3, \dots, \beta_n$ are nonzero.
We begin by considering the case when $\beta_2=\beta_3=\dots = \beta_n=0$.
In this case, 
$f_i$ is a (nonzero) polynomial in $x_1$ only, and so has the form $f_i=\gamma_1 x_1^{m_1} + \gamma_2 x_1^{m_2} $, where $\gamma_1,\gamma_2 \in \mathbb{R}$ and $0 \leq m_1 < m_2 \leq 2$.  As $(G,\kappa^*)$ has a positive steady state, we conclude that $\gamma_1$ and $\gamma_2$ are nonzero and have opposite signs.  Now Lemma~\ref{lem:hungarian} implies that $i=1$ (so, $f_1=f_i \neq 0$) and $f_2=f_3=\dots=f_n=0$.  In fact, Lemma~\ref{lem:hungarian} implies that $X_2, \dots, X_n$ are catalyst-only species of $G$ (equivalently, the mass-action ODE right-hand sides for $X_2,\dots, X_n$ are zero for all choices of positive rate constants).
Such a system is {\em not} multistatationary, which is a contradiction.

Now consider the case when two or more of the $\beta_2,\beta_3, \dots, \beta_n$ are nonzero.
In this case, 
there exist distinct $ j_1,j_2$ (where $2 \leq j_1,j_2 \leq n$) with 
$\beta_{j_1},\beta_{j_2}\neq 0$. 
Then $f_i$ contains the monomials $x_{j_1},~x_{j_2},~x_1x_{j_1},~x_1x_{j_2}$ which contradicts the fact that $G$ has exactly two reactant complexes. 

The final case is when exactly one of the $\beta_2,\beta_3, \dots, \beta_n$ is nonzero. Relabel the species, if needed, so that $\beta_2\neq 0$.
In this case, the two reactant complexes of $G$ involve only species $X_1$ and $X_2$. By using Lemma~\ref{lem:hungarian} again, much like we did for the prior case, we conclude that $X_3 ,\dots, X_n$ are catalyst-only species of $G$ 
 and so $(G,\kappa^*)$ is effectively the mass-action system of a (bimolecular) network with only two species, $X_1$ and $X_2$. Now it follows from Theorem~\ref{thm:2species} that $(G,\kappa^*)$ is {\em not} nondegenerately multistationarity, which contradicts our assumption. This completes part~(1).

We prove part (2). Assume for contradiction that $G$ has at most 4 complexes. By Proposition~\ref{prop:1-d-no-coexistence}, the dimension of the stoichiometric subspace of $G$ must be at least $2$. 
So, the deficiency of $G$ satisfies
\[
\delta ~=~ m - \ell - \dim(S) ~\leq~ 4 - 1 - 2 ~=~1~.
\]
Hence, the deficiency of $G$ is $0$ or $1$, and the latter requires $G$ to have exactly one linkage class. Now  Lemmas \ref{lem:thm:def-0}--\ref{lem:thm:def-1} imply that $G$ is not multistationary, which is a contradiction.
\end{proof}

Theorem \ref{thm:not_fulldim} gives a lower bound on the number of reactant complexes and the number of all complexes (reactants and products), and the next example shows that these bounds are tight.  The example also shows the tightness of the lower bounds on the number of species and the dimension of the network (from Theorem~\ref{thm:summary-theorem}). 

\begin{example}\label{example:min3} 
Consider the following bimolecular network with 3 species and 3 reactant complexes and 5 complexes:
\[
G~=~
\{X_1+X_2 \xrightarrow{\k_1} 2X_3, \quad X_3\xrightarrow{\k_2} X_1 , \quad 2X_3\xrightarrow{\k_3} 2X_2\}~.
\]
This network is two-dimensional, as 
the total amount of $X_1,X_2,X_3$ is conserved. For every vector of positive rate constants $\kappa$, the system $(G,\kappa)$ is nondegenerately multistationary and also has ACR in $X_3$ with ACR-value $\k_2/ (2\k_3)$.
Details are given in the proof of Proposition \ref{prop:MSS_conserved}, below, which pertains to a family of networks that includes the network $G$.
 \end{example}

\subsubsection{Non-full-dimensional networks} \label{sec:cons-law}

The bimolecular network in Example~\ref{example:min3} is the $n=3$ case of the networks $G_n$ that we introduce in the next result.  These networks have the property that every reactant complex is bimolecular, but (when $n\geq 4$) one of product complexes is not.

\begin{proposition}[ACR and multistationarity for all rate constants] \label{prop:MSS_conserved}
For all $n \geq 3$, consider the following network with $n$ species, $n$ reactant complexes, and $n$ reactions:
	\[
	G_n ~=~
	\left\{X_1+X_2 \xrightarrow{\k_1} 2X_3+\sum_{j=4}^nX_j, ~ X_3\xrightarrow{\k_2} X_1 ,  ~2X_3\xrightarrow{\k_3} 2X_2\right\} \bigcup	\left\{
	X_4\overset{\k_{4}}\to 0,~
	\dots 	,~		
		X_n\overset{\k_{n}}\to 0
		\right\}~.
	\]
Each such network $G_n$ satisfies the following:
\begin{enumerate} 
	\item there is a unique (up to scaling) conservation law, which is given by $x_1+x_2+x_3=T$, where $T$ represents the total concentration of species $X_1,X_2,X_3$; and
	\item for every vector of positive rate constants $\kappa \in \mathbb{R}^n_{>0}$, the system $(G_n,\kappa)$ is nondegenerately multistationary and also has ACR in species $X_3$, $X_4, \dots, X_n$.
\end{enumerate}
\end{proposition}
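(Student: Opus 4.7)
The plan is to prove items~(1) and~(2) by direct computation from the mass-action ODEs of $(G_n, \kappa)$, which I will write as $\dot{x}_1 = -\kappa_1 x_1 x_2 + \kappa_2 x_3$, $\dot{x}_2 = -\kappa_1 x_1 x_2 + 2\kappa_3 x_3^2$, $\dot{x}_3 = 2\kappa_1 x_1 x_2 - \kappa_2 x_3 - 2\kappa_3 x_3^2$, and $\dot{x}_j = \kappa_1 x_1 x_2 - \kappa_j x_j$ for $j \geq 4$. Summing $\dot{x}_1 + \dot{x}_2 + \dot{x}_3$ cancels to zero, confirming the conservation law $x_1 + x_2 + x_3 = T$ in item~(1); for uniqueness up to scaling, I will verify $\dim S = n-1$ by exhibiting the $n-1$ linearly independent reaction vectors $e_1 - e_3$ (from $X_3 \to X_1$), $2(e_2 - e_3)$ (from $2X_3 \to 2X_2$), and $-e_4, \ldots, -e_n$ (from $X_j \to 0$), which form an essentially upper-triangular system after reordering.

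Next, for ACR in item~(2), I will solve the steady-state equations. Equating $\dot{x}_1 = 0$ with $\dot{x}_2 = 0$ gives $\kappa_2 x_3 = \kappa_1 x_1 x_2 = 2\kappa_3 x_3^2$, which (since $x_3 > 0$) forces $x_3^* = \kappa_2 / (2\kappa_3)$; then $\dot{x}_j = 0$ for $j \geq 4$ yields $x_j^* = \kappa_2^2 / (2\kappa_3 \kappa_j)$. These give the claimed ACR values for $X_3, X_4, \ldots, X_n$. For multistationarity, the remaining constraints on $x_1, x_2$ reduce to the quadratic $t^2 - S t + P = 0$, with $S := T - \kappa_2/(2\kappa_3)$ and $P := \kappa_2^2/(2\kappa_1 \kappa_3)$; I will choose $T$ large enough so that $S > 0$ and $S^2 > 4P$, which is possible for every fixed $\kappa$ and produces two distinct positive roots $x_1^* \neq x_2^*$, hence two distinct positive steady states in the (single) compatibility class indexed by $T$.

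The main obstacle will be the nondegeneracy of these two steady states. To verify it, I plan to compute $\det dF(x^*)$ for the augmented map $F := (f_1, f_2, f_4, \ldots, f_n, c)$, where $c(x) := x_1 + x_2 + x_3 - T$ is the conservation law. Because $f_1 + f_2 + f_3 \equiv 0$, the nondegeneracy condition $\Im(df_\kappa(x^*)|_S) = S$ is equivalent to $\det dF(x^*) \neq 0$. Each of the columns $4, \ldots, n$ of $dF(x^*)$ has a single nonzero entry, namely $-\kappa_j$ in the row for $df_j$, so cofactor expansion along these columns reduces the determinant to $\pm \prod_{j=4}^n \kappa_j$ times a $3 \times 3$ determinant formed from the first three columns and the rows $df_1, df_2, dc$. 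Row-reducing that $3 \times 3$ block and substituting $x_3^* = \kappa_2/(2\kappa_3)$ collapses it to $\pm \kappa_1 \kappa_2 (x_1^* - x_2^*)$, which is nonzero precisely because the chosen $T$ makes $x_1^* \neq x_2^*$. Thus both positive steady states are nondegenerate, completing the proof.
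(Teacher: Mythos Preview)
Your proposal is correct and follows essentially the same direct-computation approach as the paper: write the mass-action ODEs, verify the conservation law, solve for the ACR values $x_3^*, x_4^*, \ldots, x_n^*$, and reduce the remaining constraints on $(x_1,x_2)$ to a sum--product pair yielding two positive solutions for large $T$. Your version is in fact more complete than the paper's, which simply asserts ``(nondegenerate)'' without computing the Jacobian and does not explicitly verify that $\dim S = n-1$; your cofactor-expansion argument reducing $\det dF(x^*)$ to $\pm\kappa_1\kappa_2(x_1^*-x_2^*)\prod_{j\ge 4}\kappa_j$ is correct and fills that gap.
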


\begin{proof}
Fix $n \geq 3$.  The mass-action ODEs for $G_n$ are as follows:
\begin{align*}
    &\frac{dx_1}{dt} ~=~
    	-\k_1x_1x_2 + \k_2 x_3\\
    &\frac{dx_2}{dt} ~=~
-\k_1x_1x_2 + 2\k_3 x_3^2\\
    &\frac{dx_3}{dt} ~=~
2\k_1x_1x_2 - \k_2x_3 - 2\k_3x_3^2\\
    &\frac{dx_j}{dt} ~=~
\k_1 x_1 x_2 - \k_{j} x_j ~~\quad \text{ for } j\in \{4,\ldots, n\}~.
\end{align*}

The network $G_n$ has exactly one conservation law (up to scaling), and it is given by $x_1+x_2+x_3=T$.   
Additionally, using the first two ODEs, we compute that the value of species $X_3$ at all positive steady states is $\frac{\k_2}{2\k_3}$.  
Next, we use this steady-state value for $X_3$, together with the first and fourth ODEs, to obtain the expression $\frac{\k_2^2}{2\k_3\k_n} $ for the steady-state value for $X_j$, for $j \geq 4$.
Thus, ACR in $X_3,X_4,\dots, X_n$ will follow once we confirm the existence of positive steady states.

Next, we investigate the steady-state values of $X_1$ and $X_2$.  Using the steady-state value of $X_3$, the conservation law, and the first ODE, we see that the steady-state values of $X_1$ and $X_2$ correspond to the intersection points of the line $x_1+x_2 + \frac{\k_2}{2\k_3} = T$ and the curve $x_1 x_2 = \frac{\k_2^2}{2\k_1\k_3}$. This is depicted qualitatively below (by [green] dashed lines and a [red] solid curve, respectively).

	\begin{center}
	\includegraphics[scale=0.35]{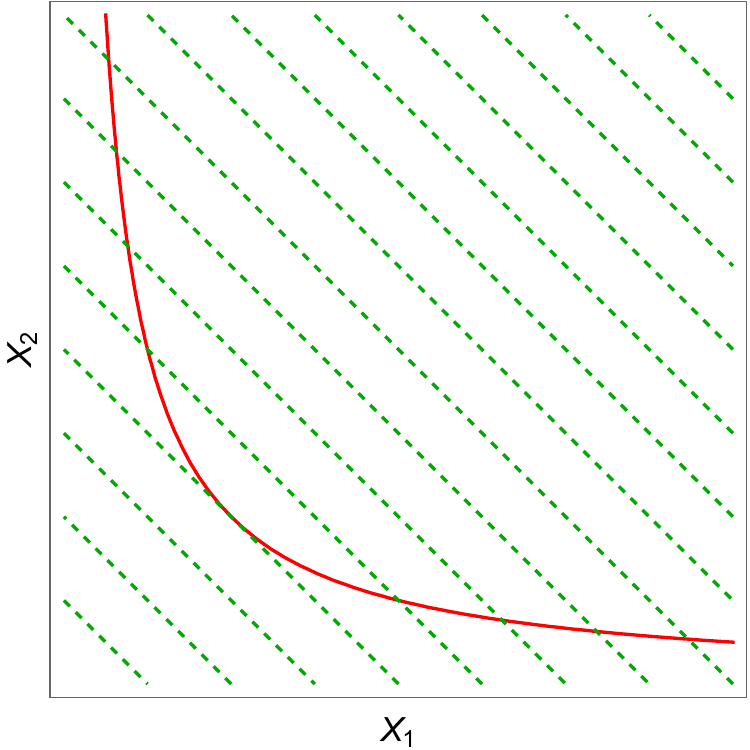}
	\end{center}

It follows that, given any vector of positive rate constants $\kappa \in \mathbb{R}^n_{>0}$, when $T$ is sufficiently large, there are two pairs of (nondegenerate) positive steady-state values for $X_1$ and $X_2$, and so 
$(G_n, \kappa)$ is nondegenerately multistationary (and thus admits a positive steady state, and so has ACR).
\end{proof}

\subsubsection{Full-dimensional networks with $3$ species} \label{sec:full-d-3-species}
Consider a bimolecular network $G$ that has $3$ species.  
We saw that if  $G$ admits ACR and nondegenerate multistationarity simultaneously, then $G$ has at least $3$ reactant complexes
(Theorem~\ref{thm:not_fulldim}).  
If, however, $G$ is full-dimensional, then more reactants are required, as stated in the following result.

\begin{proposition}[Minimum number of reactants for full-dimensional $3$-species networks] \label{prop:fulldim}
Let $G$ be a full-dimensional bimolecular reaction network with exactly 3 species. If there exists a vector of positive rate constants $\kappa^* $ such that $(G,\kappa^*)$ has ACR and is nondegenerately multistationary, then $G$ has at least 5 reactant complexes (and hence at least 5 reactions)
\end{proposition}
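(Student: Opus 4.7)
The plan is to combine the prior results on full-dimensional networks with few reactant complexes, which already cover every case with $\leq n+1 = 4$ reactant complexes. Let $G$ be full-dimensional and bimolecular with $3$ species, and let $(G,\kappa^*)$ have ACR and be nondegenerately multistationary. By Theorem~\ref{thm:not_fulldim}(1), $G$ already has at least $3$ reactant complexes, so I only need to rule out $j = 3$ and $j = 4$ reactant complexes.

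First, I would handle $j = 3$. Here $j = n$ where $n = 3$, so Proposition~\ref{prop:2-or-3-rxns3species}(2) applies directly (since $G$ is full-dimensional and has exactly $j$ reactants with $2 \leq j \leq n$): every positive steady state of every mass-action system on $G$ is degenerate. This contradicts the assumption that $(G,\kappa^*)$ is nondegenerately multistationary.

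Next I would handle $j = 4 = n+1$. Because $(G,\kappa^*)$ is nondegenerately multistationary, it has at least one nondegenerate positive steady state. Proposition~\ref{prop:rank-of-N}(3) then applies verbatim (full-dimensional, exactly $n+1$ reactant complexes, nondegenerate positive steady state present) and forces $(G,\kappa^*)$ to be \emph{not} multistationary, again a contradiction. Hence $j \geq 5$, and since each reactant complex is the source of at least one reaction, $G$ has at least $5$ reactions.

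There is no real obstacle here: the proposition is essentially a corollary of Theorem~\ref{thm:not_fulldim} together with parts~(2) of Proposition~\ref{prop:2-or-3-rxns3species} and part~(3) of Proposition~\ref{prop:rank-of-N}. The only thing to be careful about is to invoke nondegenerate multistationarity in the right way in each case: it gives a nondegenerate positive steady state (needed to trigger Proposition~\ref{prop:rank-of-N}(3)) and simultaneously rules out systems in which every positive steady state is degenerate (needed for the $j=3$ case).
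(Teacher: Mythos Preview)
Your proof is correct and matches the paper's own approach: the paper states that Proposition~\ref{prop:fulldim} is a direct consequence of Propositions~\ref{prop:rank-of-N}(3) and~\ref{prop:2-or-3-rxns3species}, which is exactly the pair of results you invoke for $j=4$ and $j=3$ respectively. Your use of Theorem~\ref{thm:not_fulldim}(1) to dispose of $j\leq 2$ is a harmless shortcut (one could equally cite Proposition~\ref{prop:2-or-3-rxns3species}(1)--(2) directly for $j=1,2$), and there is no circularity since Theorem~\ref{thm:not_fulldim} is proved independently.
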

\noindent
Proposition~\ref{prop:fulldim} is a direct consequence of 
Propositions~\ref{prop:rank-of-N}(3) and~\ref{prop:2-or-3-rxns3species}, and a stronger version of this result appears 
in the next section (Theorem \ref{theorem:smallestACRMSS}). 
Proposition~\ref{prop:fulldim} implies that if a full-dimensional bimolecular network with 3 species and fewer than 5 reactions has both ACR and multistationarity, then this coexistence happens in a degenerate way. 
We illustrate this situation with two examples, and then characterize all such networks with exactly 4 reactant complexes (Proposition~\ref{prop:4rxns3species}).

\begin{example} \label{ex:3-species-4-reactions-degenerate}
Consider the following full-dimensional network with $3$ species, $4$ reactions, and $4$ reactant complexes: 
$\{2Z \to Z,~ X+Y \to Z \to Y+Z,~ 0 \to X\}$.  
When all rate constants are $1$, the mass-action ODEs are as follows:
\begin{align*}
	\frac{dx}{dt} \quad &= \quad 1-xy\\
	\frac{dy}{dt} \quad &= \quad z - xy\\
	\frac{dz}{dt} \quad &= \quad -z^2 + xy~.
\end{align*}
For this system, 
the set of positive steady states is $\{ (x,y,z) \in \mathbb{R}^3_{>0} \mid xy=z=1\}$, 
and every positive steady state is degenerate.
We conclude that this system is multistationary (but degenerately so) and has 
ACR in species $Z$ (with ACR-value $1$).
\end{example}

\begin{example} \label{ex:3-species-4-reactions-degenerate-set2}
	Consider the following  network: 
	$\{ X+Z\to Z,~  Y+Z \leftrightarrows Y\to 0,~ 2X\leftarrow X\to X+Y \}$.  
	Like the network in Example~\ref{ex:3-species-4-reactions-degenerate}, 
	this network is full-dimensional and has 
	 $3$ species, 
	$4$ reactions, and $4$ reactant complexes; however, the set of reactant complexes differs. 
	When all rate constants are $1$, the mass-action ODEs are as follows:
	\begin{align*}
	\frac{dx}{dt} \quad &~=~ \quad x-xz\\
	\frac{dy}{dt} \quad &~=~ \quad x - y\\
	\frac{dz}{dt} \quad &~=~ \quad y - yz~.
	\end{align*}
For this system, 
the set of positive steady states is $\{ (x,y,z) \in \mathbb{R}^3_{>0} \mid x=y,~z=1\}$, 
and every positive steady state is degenerate.
Thus, this system is (degenerately) multistationary and has 
ACR in species $Z$ (with ACR-value $1$).
\end{example}

The next result shows that Examples~\ref{ex:3-species-4-reactions-degenerate} and~\ref{ex:3-species-4-reactions-degenerate-set2} cover all cases of three-species, four-reactant networks with ACR and (degenerate) ACR occurring together, in the sense that these two networks represent the only two possibilities for the set of reactant complexes 
(when a certain full-rank condition is met, which we discuss below in Remark~\ref{rem:rk-condition-in-thm}).

\begin{proposition}[Networks with $3$ species and $4$ reactants]\label{prop:4rxns3species}
	Let $G$ be a full-dimensional bimolecular
	reaction network with exactly 3 species -- which we call $X,Y,Z$ -- and exactly 4 reactant complexes. 
	If $\kappa^* $ is a vector of positive rate constants such that: 
	\begin{itemize}
	\item[(a)] $\operatorname{rank}(N) = 3$, 
	where $N$ is the matrix for $(G,\kappa^*)$ as in~\eqref{eqn:ODEexpression},
	\item[(b)] $(G, \kappa^*)$ has ACR in species $Z$, and
	\item[(c)] $(G,\kappa^*)$ is multistationary (which is degenerately so, by Proposition~\ref{prop:fulldim}),
	\end{itemize}
	then the set of reactant complexes of $G$ is either 
		$\{X,~X+Z,~Y,~Y+Z\}$ or
		$\{0,~X+Y,~Z,~2Z\}$.
\end{proposition}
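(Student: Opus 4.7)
The plan is to combine Proposition~\ref{prop:rank-of-N}(2) with the ACR hypothesis to reduce the problem to an explicit enumeration of four-element subsets of the bimolecular complexes on $\{X,Y,Z\}$. Since $\operatorname{rank}(N) = 3$ and $G$ has exactly $n+1 = 4$ reactant complexes, Proposition~\ref{prop:rank-of-N}(2) states that the positive steady states satisfy three binomials $m_i - \beta_i m_4 = 0$ (with $\beta_i > 0$, since positive steady states exist). Taking logarithms as in~\eqref{eq:linear-system-proof} yields a linear system $A\ell = \ln(\beta)$ in the variables $\ell = (\ln X, \ln Y, \ln Z)$, where $A$ is the $3 \times 3$ integer matrix whose $i$-th row is the difference $y_i - y_4$ of the exponent vectors of the $i$-th and fourth reactant complexes (the same matrix appearing in Proposition~\ref{prop:fulldim_noPSS}).

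Multistationarity of $(G,\kappa^*)$ forces the system $A\ell = \ln(\beta)$ to have more than one solution, so $\operatorname{rank}(A) \leq 2$; ACR in $Z$ forces $\ell_3 = \ln Z$ to be constant across solutions, so $\ker A \subseteq \{\ell_3 = 0\}$. I would rule out $\operatorname{rank}(A) \leq 1$ directly: rank $0$ contradicts distinctness of the reactant complexes, while rank $1$ would force $\ker A$ to coincide with the two-dimensional space $\{\ell_3 = 0\}$, making every row of $A$ a multiple of $(0,0,1)$ and hence forcing the four reactants to share the same $X$- and $Y$-counts; bimolecularity then permits at most three distinct such complexes, a contradiction. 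Hence $\operatorname{rank}(A) = 2$, and the condition $\ker A \subseteq \{\ell_3 = 0\}$ becomes the statement that the first two columns of $A$ are linearly dependent -- equivalently, the $XY$-projections of the four reactant complexes lie on a common affine line in $\mathbb{R}^2$.

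The remainder is a case analysis. The ten bimolecular complexes in $\{X,Y,Z\}$ project onto six points of $\mathbb{R}^2$: $(0,0)$ (from $0, Z, 2Z$), $(1,0)$ (from $X, X+Z$), $(0,1)$ (from $Y, Y+Z$), $(2,0)$ (from $2X$), $(0,2)$ (from $2Y$), $(1,1)$ (from $X+Y$). Checking every affine line through at least two of these points, only four lines support enough bimolecular complexes to admit a four-element subset: $y=0$ and $x=0$ (each supporting six complexes), $y = x$ (supporting exactly $\{0, Z, 2Z, X+Y\}$), and $x+y = 1$ (supporting exactly $\{X, X+Z, Y, Y+Z\}$). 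Now Lemma~\ref{lem:involve-all-species} applies (as $\operatorname{rank}(N) = 3$ and $(G,\kappa^*)$ admits a positive steady state), so every species must appear in some reactant complex; this eliminates the lines $y=0$ (which omits $Y$) and $x=0$ (which omits $X$), leaving precisely the two reactant sets asserted in the proposition.

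The main obstacle I anticipate is the bookkeeping of the enumeration -- specifically, verifying carefully that no other affine line in $\mathbb{R}^2$ supports four bimolecular complexes (the projections with multiplicity $3,2,2,1,1,1$ make the line $y=x$ and the line $x+y=1$ arise as borderline cases), and correctly translating the rank-nullity statement about $\ker A$ into the combinatorial condition on $XY$-projections. Both steps are elementary but deserve care.
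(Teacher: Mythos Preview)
Your proof is correct and takes a genuinely different route from the paper's. The paper substitutes the ACR-value $z=\alpha$ into the three binomials $h_i = m_i - \beta_i m_4$ to obtain bivariate binomials $g_i$ in $x,y$, classifies each zero set $S_i \subseteq \mathbb{R}^2_{>0}$ into four geometric types (whole quadrant, horizontal line, vertical line, strictly increasing curve), and then argues case-by-case on how many of the $S_i$ equal the full quadrant---three subcases, each solved by explicit inequalities on the exponents. Your argument instead passes to log-coordinates, translates multistationarity and ACR into the conditions $\operatorname{rank}(A)=2$ and $\ker A \subseteq \{\ell_3 = 0\}$, and observes that this is equivalent to affine collinearity of the $XY$-projections of the four reactant complexes; the result then follows from an enumeration of lines in the projection picture together with Lemma~\ref{lem:involve-all-species}. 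Your route unifies the paper's three subcases into a single linear-algebraic statement and makes the role of the species-involvement lemma more transparent; the paper's argument, on the other hand, avoids the rank dichotomy for $A$ by working directly with the curves, which some readers may find more concrete. The enumeration you flag as the main obstacle is indeed routine: the six projection points have multiplicities $3,2,2,1,1,1$, and a short check of all lines through pairs of them confirms that only $y=0$, $x=0$, $y=x$, and $x+y=1$ support four or more bimolecular complexes, with the first two eliminated by Lemma~\ref{lem:involve-all-species}.
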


\begin{proof} 
Let $G$, $\kappa^*$, and $N$ be as in the statement of the proposition.  
In particular, 
$G$ has $3$ species and $4$ reactants, and 
$(G,\kappa^*)$ admits a positive steady state, which we denote by
$(x^*,y^*, \alpha)$ (so $\alpha$ is the ACR-value of $Z$).
Also, $N$ has rank $3$ and so Proposition~\ref{prop:rank-of-N}(2) and its proof imply
that steady-state equations can be ``row-reduced'' so that
 the positive steady states of $(G,\kappa^*)$ are the roots of $3$ binomial equations of the following form: 
	\begin{align*}
	h_1 ~&:=~ { m}_1 - \beta_1 { m}_4 ~=~0 \\
	h_2 ~&:=~ { m}_2 - \beta_2 { m}_4 ~=~0 \\
	h_3~&:=~ { m}_3 - \beta_3 { m}_4 ~=~0 ~,
	\end{align*}
	where
	$\beta_j \in \mathbb{R}$ (for $j=1,2,3$) and 
	 $m_i=x^{a_i} y^{b_i} z^{c_i} $ (for $i=1,2,3,4$) are 4 distinct monic monomials given by the reactant complexes. 
	 Also, each $m_i$ (for $i=1,2,3,4$) has degree at most 2
	 in $x,y,z$ (as $G$ is bimolecular). 
	 In other words, $a_i,b_i,c_i$ are non-negative integers that satisfy the following:
	 \begin{align} \label{eq:inequ}
	 a_i+b_i+c_i ~\leq~ 2~.
	 \end{align}
%
	We infer that $\beta_1, \beta_2, \beta_3 >0$, because otherwise $h_1=h_2=h_3=0$ would have no positive roots. 
	
	For $i\in\{1,2,3\}$, consider the following, where we recall that  $\alpha$ is the ACR-value of $Z$:
	\begin{align*}
	g_i ~:=~
	h_i|_{z = \alpha}
	~=~
	 d_i x^{a_i} y^{b_i} - d_i' x^{a_4} y^{b_4}~,
	\end{align*} 
	where 
	$d_i:= \alpha^{c_i}>0$ and $d_i':= \beta_i  \alpha^{c_4}>0$. 
	For $i\in\{1,2,3\}$, 
	by construction, $g_i(x^*,y^*)=0$ and so the subset of the positive quadrant $\mathbb{R}^2_{>0}$ defined by $g_i=0$, which we denote by $S_i$, is nonempty.  There are four possible ``shapes'' for each set $S_i$:
	\begin{enumerate}
		\item $S_i = \mathbb{R}^2_{>0}$, when $(a_i,b_i)=(a_4,b_4)$  (and necessarily, $d_i = d_i'$, to avoid $S_i = \emptyset$).
		\item $S_i$ is the horizontal line $y=y^*$, when $a_i = a_4$ and $b_i \neq b_4$.
		\item $S_i$ is the vertical line $x=x^*$, when $a_i \neq a_4$ and $b_i = b_4$.
		\item $S_i$ is a strictly increasing curve (passing through $(x^*,y^*)$) defined by the following equation, when $a_i \neq a_4$ and $b_i \neq b_4$:
		\begin{align*}
		y ~=~ \left( \frac{d_i}{d'_i} \right) ^ {\frac{1}{b_4 - b_i } } x ^{\frac{a_i - a_4 }{b_4 - b_i } } ~. 
		\end{align*}	
	\end{enumerate}
	Any two lines/curves of the form $(2)$--$(4)$ either coincide or intersect only at $(x^*,y^*)$. Hence, 
	the intersection $S_1 \cap S_2 \cap S_3$ is either 
	\begin{enumerate}[(a)]
	\item 
	the single point $(x^*,y^*)$, 
	\item 
	a single line or curve of the form  $(2)-(4)$, or 
	\item 
	the positive quadrant $ \mathbb{R}^2_{>0}$.  
	\end{enumerate}
	By construction and the fact that $\alpha$ is the ACR-value, the set of all positive steady states of $(G,\kappa^*)$ is the set $\{(x,y, \alpha ) \mid (x,y) \in S_1 \cap S_2 \cap S_3\}$.  Hence, in the case of (a), $(G,\kappa^*)$ is not multistationary, which is a contradiction. 

Next, we show that case (c) does not occur. On the contrary, assume that it does. Then $S_1=S_2=S_3=\RR_{> 0}^2$, which implies that $(a_1,b_1)=(a_2,b_2)=(a_3,b_3) = (a_4,b_4)$. 
Since $m_1,m_2,m_3,m_4$ are $4$ distinct monomials, 
it must be that $c_1,c_2,c_3,c_4$ are $4$ distinct non-negative integers.
However, as noted earlier, $c_i\in\{0,1,2\}$ for each $i$, which yields a contradiction.

Finally, we consider case (b).  
This case happens only when one of the following subcases occur:

{\bf Subcase 1:}
			{\em Exactly 1 of the 3 subsets $S_i$ is the positive quadrant, and the other two coincide.} 
			Without loss of generality, assume $S_1 = \mathbb{R}^2_{>0}$ and so $S_2 = S_3 \neq \mathbb{R}^2_{>0}$. 
			Hence, 
			$(a_1,b_1)=(a_4,b_4) \neq
			(a_2,b_2) = (a_3,b_3)$. 
			However, $m_1 \neq m_4$ and $m_2 \neq m_3$, and so:
			\begin{align} \label{eq:inequality-z-coeff}
			c_1~\neq~ c_4 \quad {\rm and} \quad c_2~\neq~c_3~.
			\end{align}
			We rewrite the inequalities~\eqref{eq:inequ}, 
			using 
			the equalities 
			$(a_1,b_1)=(a_4,b_4)$ and 
			$(a_2,b_2) = (a_3,b_3)$:
				 \begin{align} \label{eq:inequ-case-1}
	 		a_1+b_1+c_1 ~\leq~ 2~,
			\quad  
	 		a_1+b_1+c_4 ~\leq~ 2~,
			\quad  
	 		a_2+b_2+c_2 ~\leq~ 2~,
			\quad  
	 		a_2+b_2+c_3 ~\leq~ 2~.
	 		\end{align}
			Finally, Lemma~\ref{lem:involve-all-species} implies that each of species $X$ and $Y$ takes part in some reactant complex, so we obtain the following (again using $(a_1,b_1)=(a_4,b_4)$ and 
			$(a_2,b_2) = (a_3,b_3)$):
			\begin{align} \label{eq:sum}
			a_1 + a_2 ~\geq~ 1
				 \quad {\rm and} \quad 
			b_1 + b_2 ~\geq~ 1~. 
			\end{align}
			The only non-negative solutions to the conditions in~\eqref{eq:inequality-z-coeff}, \eqref{eq:inequ-case-1}, and \eqref{eq:sum} are as follows:
			\begin{itemize}
				\item $a_1=a_3=1$, $a_2=a_3=0$, $b_1=b_4=0$, $b_2=b_3=1$, $\{c_1,c_4\} = \{c_2,c_3\} = \{0,1\}$; 
				\item $a_1=a_3=0$, $a_2=a_3=1$, $b_1=b_4=1$, $b_2=b_3=0$, $\{c_1,c_4\} = \{c_2,c_3\} = \{0,1\}$.
			\end{itemize}
			In all of these solutions, the set of reactant complexes is $\{X, ~X+Z, ~Y, ~Y+Z\}$. 

{\bf Subcase 2:}
			 {\em Exactly 2 of the 3 subsets $S_i$ are the positive quadrant.} 
			 Without loss of generality, assume that 
			  $S_1 = S_2 = \mathbb{R}^2_{>0} \neq  S_3$.  This implies the following:
			  \begin{align} \label{eq:subcase-2-of-3} 
			  (a_1,b_1)~=~(a_2,b_2)~=~(a_4,b_4) ~\neq~ (a_3,b_3) ~.
			  \end{align}
			  However, $m_1,m_2,m_4$ are $3$ distinct monomials, so $c_1, c_2,c_4$ are
			  $3$ distinct non-negative integers.  Now inequality~\eqref{eq:inequ} implies that 
			  $\{c_1, c_2,c_4\} = \{0,1,2\}$.  Let $i^* \in \{1,2,4\}$ be such that $c_{i^*}=2$.  
			  Next, 
			  the equalities in~\eqref{eq:subcase-2-of-3} and the 
			  inequality~\eqref{eq:inequ} for $i=i^*$ together imply that 
			 $(a_1,b_1)=(a_2,b_2)=(a_4,b_4)=(0,0)$. 
			 Therefore, the set of reactant complexes corresponding to $m_1,m_2,m_4$ is
			 $\{0, ~Z, ~2Z\}$. 
			 Finally, Lemma~\ref{lem:involve-all-species} implies that the fourth reactant complex must involve both $X$ and $Y$ and so (by bimolecularity) is $X+Y$.  
			 Therefore, the set of reactant complexes is $\{0,~ X+Y, ~Z, ~2Z\}$.

{\bf Subcase 3:}
			{\em None of the subsets $S_i$ are positive quadrants, and the $3$ sets coincide.}
			This implies that $(a_1,b_1)=(a_2,b_2)=(a_3,b_3)  \neq (a_4,b_4)$. 
			These conditions are symmetric to those in subcase~2, and so the reactant complexes are $\{0,~ Z, ~2Z, ~X+Y\}.$  This completes subcase 3 (and case (b)).
\end{proof}

\begin{remark}[Rank condition] \label{rem:rk-condition-in-thm} 
Proposition~\ref{prop:4rxns3species} includes the hypothesis that the matrix $N$ for the system $(G,\kappa^*)$ has (full) rank $3$. 
If we remove this hypothesis, we can obtain more full-dimensional networks with $3$ species and $4$ reactants that allow ACR and (degenerate) multistationarity to occur together.
We present one such network in Example~\ref{eg:3species-2rank}.
\end{remark}

\begin{example}\label{eg:3species-2rank}
	Consider the following full-dimensional network with 3 species and 4 reactants: 
	\[G:=\{0\to X\to Y \to 2 Y,~~~~ Y\leftarrow Y+Z \to 2 Z\}~. \]
	The system $(G,\kappa^*)$ obtained by setting all the reaction rates to 1 has the following ODEs:
		\begin{align*}
\begin{bmatrix}
dx/dt\\
dy/dt\\
dz/dt
\end{bmatrix} ~=~
	\begin{bmatrix}
	-1& 0 & 0 & 1 \\
1	& 1 & -1 & 0 \\
	0& 0 & 0 & 0 \\
	\end{bmatrix}
	\begin{bmatrix}
	x\\
	y\\
	yz\\
	1
	\end{bmatrix}	
	~=~ N  
	\begin{bmatrix}
	x\\
	y\\
	yz\\
	1
	\end{bmatrix}	~.
	\end{align*}
	The matrix $N$ (defined above) has rank $2$, the set of positive steady states is $\{ (x,y,z) \in \mathbb{R}^3_{>0} \mid x=1,~ y(z-1)=1\}$, and every positive steady state is degenerate.  
	Thus, this system is (degenerately) multistationary and has 
	ACR in species $X$ (with ACR-value $1$).
\end{example}

In the next section, we see that the exceptional networks in Proposition~\ref{prop:4rxns3species} -- namely, full-dimensional, three-species networks with reactant-complex set $ \{0,Z,2Z,X+Y\}$ or $\{X,Y,X+Z,Y+Z\}$ -- do not have unconditional ACR.  Indeed, this fact is a direct consequence of a more general result concerning networks with $n$ species and $n+1$ reactants (Theorem~\ref{thm:generaln1reactants}).

\section{Main results on general networks} \label{sec:results-general}
The results in the prior section pertain to networks that are bimolecular, while here we analyze networks that need not be bimolecular.
We consider full-dimensional networks (Section~\ref{sec:full-dim}) and non-full-dimensional networks (Section~\ref{sec:not-full-dim}) separately.

\subsection{Full-dimensional networks} \label{sec:full-dim}

In Proposition~\ref{prop:MSS_conserved}, we saw a family of networks 
that admit
 ACR and nondegenerate multistationarity together. 
 These networks 
have $n$ reactants (where $n$ is the number of species), but are not full-dimensional.  
In this subsection, we show that for full-dimensional networks, 
the coexistence of 
ACR and nondegenerate multistationarity requires at least $n+2$ reactants (Theorem~\ref{theorem:smallestACRMSS}). 
We also show that this lower bound is tight (Proposition~\ref{prop:family-min-num-reactants}).  
Additionally, we consider full-dimensional networks with only $n+1$ reactants and show that if such a network is multistationary (even if only degenerately so), then the network can not have unconditional ACR
(Theorem~\ref{thm:generaln1reactants}).

\begin{theorem}[Minimum number of reactants for full-dimensional networks] \label{theorem:smallestACRMSS}
	Let $G$ be a full-dimensional reaction network with $n$ 
	species.
	If there exists a vector of positive rate constants $\kappa^*$ such that the mass-action system $(G,\kappa^*)$ has ACR and also is nondegenerately multistationary, then 
	$n \geq 2$ and
	 $G$ has at least $n+2$ reactant complexes and hence, at least $n+2$ reactions.
\end{theorem}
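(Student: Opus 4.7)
The plan is to assemble this result directly from the infrastructure already built in Section~\ref{sec:preliminary-results}, so the argument should be short and mostly bookkeeping. Let $(G,\kappa^*)$ satisfy the hypotheses: $G$ is full-dimensional with $n$ species, and $(G,\kappa^*)$ has ACR while being nondegenerately multistationary. In particular, $(G,\kappa^*)$ admits a nondegenerate positive steady state, and it admits at least two positive steady states in a common stoichiometric compatibility class.

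For the claim $n\geq 2$, I would argue by contradiction: if $n=1$, then $G$ is one-dimensional (as it is full-dimensional), so Proposition~\ref{prop:1-d-no-coexistence} forces $(G,\kappa^*)$ to not be nondegenerately multistationary, a contradiction.

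For the lower bound on the number of reactant complexes, let $j$ denote the number of (distinct) reactant complexes of $G$. I would rule out each possibility $j \in \{1, 2, \dots, n+1\}$ in turn. First, $j=1$ is impossible by Proposition~\ref{prop:2-or-3-rxns3species}(1), which guarantees that $(G,\kappa^*)$ has no positive steady states in this case. Next, for $2 \leq j \leq n$, Proposition~\ref{prop:2-or-3-rxns3species}(2) (applicable because $G$ is full-dimensional and $n\geq 2$) asserts that every positive steady state is degenerate, contradicting the existence of the nondegenerate positive steady state. Finally, the case $j=n+1$ is exactly the setting of Proposition~\ref{prop:rank-of-N}(3): the existence of a nondegenerate positive steady state then forces $(G,\kappa^*)$ to not be multistationary, again a contradiction. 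Hence $j\geq n+2$, and since the number of reactions is at least the number of reactant complexes, $G$ has at least $n+2$ reactions.

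Because the argument is purely assembly, there is essentially no obstacle to overcome here; all of the technical content lives in the supporting lemmas and propositions in Section~\ref{sec:preliminary-results}. The only point requiring any care is making sure that the hypotheses of Propositions~\ref{prop:2-or-3-rxns3species}(2) and~\ref{prop:rank-of-N}(3) are genuinely met in each case, namely full-dimensionality of $G$ (given) and the existence of a nondegenerate positive steady state (guaranteed by nondegenerate multistationarity). No bimolecularity assumption is needed, which is why the result holds for general networks.
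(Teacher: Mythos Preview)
Your proof is correct and follows essentially the same approach as the paper's: invoke Proposition~\ref{prop:2-or-3-rxns3species} for $j\leq n$ reactant complexes and Proposition~\ref{prop:rank-of-N}(3) for $j=n+1$. The only minor difference is that for $n\geq 2$ the paper argues directly from the definitions (with one species and a full-dimensional network, ACR forces a unique positive steady state), whereas you route through Proposition~\ref{prop:1-d-no-coexistence}; both are valid.
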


\begin{proof}
It follows readily from definitions that ACR and multistationarity do not coexist in networks with only one species, so $n \geq 2$.
We proceed by contrapositive. 
We consider two cases.  
If $G$ has at most $n$ reactant complexes, then 
Proposition~\ref{prop:2-or-3-rxns3species} (which requires $n \geq 2$) implies that every positive steady state of $(G,\kappa^*)$ is degenerate and so $(G,\kappa^*)$ is {\em not} nondegenerately multistationary.
In the remaining case, when $G$ has $n+1$ reactant complexes, Proposition~\ref{prop:rank-of-N}(3)
implies that $(G,\kappa^*)$ is {\em not} nondegenerately multistationary.
\end{proof}

The next example shows that the bound in Theorem~\ref{theorem:smallestACRMSS} is tight for $n=2$.

\begin{example}
\label{eg:2speciesminreactions} 
The following network is full-dimensional and has $2$ species, $4$ reactant complexes, and $4$ reactions (the out-of-order labeling of the rate constants is to be consistent with Proposition~\ref{prop:family-min-num-reactants}, which appears later):		
		\[
		\{
		A+B \overset{\kappa_1 } \to 2B \overset{\kappa_3 } \to 2B+A
		~,~
		B \overset{\kappa_2 } \to 0 \overset{\kappa_4 } \to A		 			  		 			  
		\}~.
		\]
Observe that all reactant complexes are bimolecular, but one of the product complexes is not. 
In the next result, we show that this network exhibits ACR (in species $A$ with ACR-value $\kappa_2/\kappa_1$) and nondegenerate multistationarity when $\kappa_2^2 > 4 \kappa_3 \kappa_4.$ (Proposition~\ref{prop:family-min-num-reactants}).  
Among full-dimensional networks for which ACR and nondegenerate multistationarity coexist,
this network is optimal 
in the sense that it has the fewest possible
species,  reactant complexes, and reactions (by 
Theorem~\ref{theorem:smallestACRMSS}).
\end{example}

In the next result, we generalize the network in Example~\ref{eg:2speciesminreactions} to a family of networks that show that the lower 
bound on the number of reactions in Theorem~\ref{theorem:smallestACRMSS} is tight for all $n$.
The networks in the following result are also optimal in terms of the molecularity of the reactant complexes (they are bimolecular), although two of the product complexes have high molecularity.

\begin{proposition}\label{prop:family-min-num-reactants}
For all $n \geq 2$, consider the following full-dimensional network with 
$n$ species, $n+2$ reactions, and $n+2$ reactant complexes:
	\begin{align*} 
	G_n ~&=~
	\left\{
	X_1+X_2 \overset{\kappa_1 } \to 2X_2 +
		X_3 + \dots + X_n
	,~
	X_2 \overset{\kappa_2 } \to 0,~
	2X_2	\overset{\kappa_3 } \to 2X_2+X_1
	,~
	0 \overset{\kappa_4 } \to X_1		 			  
	\right\}
	\\
	\notag
	& \quad \quad \quad \bigcup 
	\{ X_j\overset{\kappa_{j+2}}\to 0 \mid 3 \leq j \leq n\}.
	\end{align*}
For every vector of positive rate constants $\kappa^*$ for which $(\kappa^*_2)^2 > 4 \kappa^*_3 \kappa^*_4 $, the system $(G_n, \kappa^*)$ has nondegenerate multistationarity and has ACR in species $X_1$.
\end{proposition}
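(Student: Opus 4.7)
\medskip

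\noindent\textbf{Proof proposal for Proposition~\ref{prop:family-min-num-reactants}.}
The plan is to write down the mass-action ODEs explicitly, solve the steady-state equations directly, and then verify nondegeneracy by a block-triangular Jacobian computation. First I would record the ODEs: for $(G_n,\kappa^*)$ with $\kappa^*=(\kappa_1^*,\dots,\kappa_{n+2}^*)$, one obtains
\begin{align*}
\dot x_1 &~=~ -\kappa_1^* x_1 x_2 + \kappa_3^* x_2^2 + \kappa_4^*,\\
\dot x_2 &~=~ x_2(\kappa_1^* x_1 - \kappa_2^*),\\
\dot x_j &~=~ \kappa_1^* x_1 x_2 - \kappa_{j+2}^* x_j \qquad (3\leq j\leq n).
\end{align*}
At a positive steady state, $\dot x_2=0$ forces $x_1=\kappa_2^*/\kappa_1^*$, which already establishes ACR in $X_1$ with ACR-value $\kappa_2^*/\kappa_1^*$ (once we confirm that positive steady states exist).

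Next, substituting $x_1=\kappa_2^*/\kappa_1^*$ into $\dot x_1=0$ reduces the problem to the univariate quadratic $\kappa_3^* x_2^2 - \kappa_2^* x_2 + \kappa_4^*=0$. Under the hypothesis $(\kappa_2^*)^2 > 4\kappa_3^*\kappa_4^*$, this quadratic has two distinct real roots; since the sum $\kappa_2^*/\kappa_3^*$ and the product $\kappa_4^*/\kappa_3^*$ are both positive, both roots are positive. For each such value of $x_2$, the remaining equations $\dot x_j=0$ (for $j\geq 3$) determine $x_j = \kappa_2^* x_2/\kappa_{j+2}^*>0$ uniquely. Because $G_n$ is full-dimensional, the entire positive orthant is the single stoichiometric compatibility class, so the two distinct positive steady states witness multistationarity.

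The remaining step is nondegeneracy, which I expect to be the easiest part due to the structure of the system. The Jacobian at a positive steady state with $\kappa_1^* x_1=\kappa_2^*$ has the block form
\[
J ~=~ \begin{pmatrix} A & 0 \\ B & D \end{pmatrix},
\]
where
\[
A ~=~ \begin{pmatrix} -\kappa_1^* x_2 & -\kappa_2^* + 2\kappa_3^* x_2 \\ \kappa_1^* x_2 & 0 \end{pmatrix},\qquad D~=~\operatorname{diag}(-\kappa_5^*,\dots,-\kappa_{n+2}^*),
\]
since columns $3,\dots,n$ of rows $1,2$ vanish. Hence $\det J = \det A\cdot\det D = \kappa_1^* x_2(\kappa_2^*-2\kappa_3^* x_2)\cdot(-1)^{n-2}\prod_{j=3}^n\kappa_{j+2}^*$. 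The value $\kappa_2^*/(2\kappa_3^*)$ is the axis of symmetry of the quadratic above, and the two distinct roots (guaranteed by the strict inequality $(\kappa_2^*)^2>4\kappa_3^*\kappa_4^*$) lie symmetrically on either side of it; neither coincides with the vertex. Therefore $\det J\neq 0$ at each of the two positive steady states, so both are nondegenerate. The main (minor) obstacle is simply keeping the bookkeeping straight for the $n$ species, but the block-triangular structure makes the Jacobian computation essentially immediate.
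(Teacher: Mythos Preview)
Your proposal is correct and follows essentially the same approach as the paper: write down the ODEs, read off the ACR-value from $\dot x_2=0$, solve the resulting quadratic in $x_2$, and then determine the remaining coordinates. The only difference is that the paper leaves nondegeneracy as ``can be checked by computing the Jacobian matrix,'' whereas you actually carry out the block-triangular determinant computation; your added detail is correct and strengthens the write-up.
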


\begin{proof}
The mass-action ODEs are given by:
	\begin{align*}
{\frac{dx_1}{dt}}	&~=~\kappa_3 x_2^2 - \kappa_1 x_1 x_2 + \kappa_4\\
{\frac{dx_2}{dt}}	&~=~\kappa_1 x_1 x_2-\kappa_2x_2\\
{\frac{dx_j}{dt}}	&~=~\kappa_1 x_1 x_2 - \kappa_{j+2} x_j ~~\quad \text{ for } j\in {3,\ldots, n.}
\end{align*}
The steady-state equation for $X_2$ implies that $x_1=\kappa_2/\kappa_1$ at all positive steady states, so there is ACR in $X_1$ (whenever positive steady states exist).  Next, the steady-state equations for $X_1$ and $X_2$ imply that the steady state values of $X_2$ are 
$x_2^{\pm} :=
\frac{k_2 \pm \sqrt{k_2^2 - 4k_3k_4}}{2 k_3}$.  Both of these steady state values are positive precisely when the discriminant $k_2^2 - 4k_3k_4$ is positive (this is a straightforward computation; alternatively, see~\cite[Proposition~2.3]{dennis-shiu}).  Now we use the steady-state equation for $X_j$, where $j\geq 3$, to compute the two positive steady states that exist whenever $(\kappa^*_2)^2 > 4 \kappa^*_3 \kappa^*_4 $:
\begin{align*} \label{eq:2-steady-states}
	& 
	\left(
	x_1^*,~
	x_2^{+},~
	\frac{\kappa_1}{\kappa_3} x_1^*x_2^{+},~
	\dots,
	~
	\frac{\kappa_1}{\kappa_n} x_1^*x_2^{+}
	\right)
	\quad {\rm and } \quad 
	\left(
	x_1^*,~
	x_2^{-},~
	\frac{\kappa_1}{\kappa_3} x_1^*x_2^{-},~
	\dots,
	~
	\frac{\kappa_1}{\kappa_n} x_1^*x_2^{-}
	\right)	~,
\end{align*}
where $x^*_1:=\kappa_2/\kappa_1$.
Finally, nondegeneracy can be checked by a computing the Jacobian matrix.
\end{proof}

Our next result concerns full-dimensional networks with one more reactant than the number of species, as follows.  

\begin{theorem}[Networks with $n+1$ reactants]\label{thm:generaln1reactants}
	Let $G$ be a full-dimensional network, with $n$ species and exactly $n+1$ reactant complexes.  If 
	$G$ is multistationary, then there exists a vector of positive rate constants $\kappa$ such that $(G,\kappa)$ has \uline{no} positive steady states, and hence $G$ does \uline{not} have unconditional ACR.
\end{theorem}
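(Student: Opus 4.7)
The plan is to split into cases based on the rank of the $n\times(n+1)$ matrix $N$ from~\eqref{eqn:ODEexpression} at rate constants that witness multistationarity, and in each case invoke one of the no-positive-steady-state results established earlier in Section~\ref{sec:ODE-reactants}. The claim about unconditional ACR will then be immediate from Definition~\ref{def:acr}(1), since ACR requires the existence of a positive steady state, so any single $\kappa$ for which $(G,\kappa)$ has no positive steady state already rules out unconditional ACR.

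Fix rate constants $\kappa^*$ such that $(G,\kappa^*)$ admits two distinct positive steady states $x^*\neq x^{**}$, and let $N$ denote the associated $n\times(n+1)$ matrix from~\eqref{eqn:ODEexpression}. If $\operatorname{rank}(N) \leq n-1$, then Proposition~\ref{prop:fulldim-fullrank-PSS}(2) immediately produces rate constants $\kappa^{**}$ for which $(G,\kappa^{**})$ has no positive steady state, and we are done. Otherwise, $\operatorname{rank}(N) = n$, and Proposition~\ref{prop:rank-of-N}(2) together with the logarithmic reformulation developed in its proof (see~\eqref{eq:laurent-binomials}--\eqref{eq:linear-system-proof}) says that positive steady states of $(G,\kappa^*)$ correspond bijectively, via $x_i \mapsto \ln x_i$, to real solutions of a linear system whose coefficient matrix is obtained from the exponent matrix of the reactant complexes by subtracting one fixed row from the remaining $n$ and deleting that row. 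This matrix has the same rank as the matrix $A$ from Proposition~\ref{prop:fulldim_noPSS}. Since $x^*$ and $x^{**}$ furnish two distinct solutions to this linear system, we obtain $\operatorname{rank}(A) \leq n-1$, whereupon Proposition~\ref{prop:fulldim_noPSS}(2) delivers rate constants with no positive steady state, completing the argument.

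The only subtle point will be that the row reduction in the proof of Proposition~\ref{prop:rank-of-N}(2) may relabel the reactants, so the ``pivot'' reactant playing the role of $m_{n+1}$ need not be the one singled out as the last row when defining $A$ in Proposition~\ref{prop:fulldim_noPSS}; this is harmless, however, because any choice of pivot yields the same row span (the differences $y_i - y_j$ for various $i,j$ span a single subspace), and hence the same rank. The rest of the proof is a clean combination of the rank analyses already established, with no new computations required.
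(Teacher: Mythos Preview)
Your proof is correct and follows essentially the same route as the paper: split on whether $\operatorname{rank}(N)\le n-1$ (invoke Proposition~\ref{prop:fulldim-fullrank-PSS}(2)) or $\operatorname{rank}(N)=n$ (deduce $\operatorname{rank}(A)\le n-1$ from the existence of two distinct steady states via the logarithmic linear system, then invoke Proposition~\ref{prop:fulldim_noPSS}(2)). The paper phrases the second step as a contradiction (``if both ranks were $n$, the proof of Proposition~\ref{prop:fulldim_noPSS} would preclude multistationarity''), whereas you derive $\operatorname{rank}(A)\le n-1$ directly and also make explicit the harmless relabeling-of-pivot point; these are cosmetic differences only.
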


\begin{proof}
Assume that $G$ is full-dimensional, has exactly $n+1$ reactant complexes (where $n$ is the number of species), and is multistationarity.  By definition, there exists $\kappa^* \in \mathbb{R}^r_{>0}$, where $r$ is the number of reactions, such that $(G,\kappa^*)$ is multistationary.  Let $N$ be the $n \times (n+1)$ matrix arising from $(G,\kappa^*)$, as in~\eqref{eqn:ODEexpression}; and let $A$ be the $n \times n$ matrix defined by $G$, as in Proposition~\ref{prop:fulldim_noPSS}.  

We claim that  $\operatorname{rank}(N) \leq n-1$ or  $\operatorname{rank}(A) \leq n-1$.  Indeed, if $\operatorname{rank}(N) = n$ and  $\operatorname{rank}(A) = n$, then the proof of Proposition~\ref{prop:fulldim_noPSS} shows that $(G,\kappa^*)$ is not multistationary, which is a contradiction.

If  $\operatorname{rank}(N) \leq n-1$, then Proposition~\ref{prop:fulldim-fullrank-PSS}(2) implies that 
there exists
$\kappa^{**} \in \mathbb{R}^r_{>0}$ such that $(G,\kappa^{**})$ has {no} positive steady states.  
Similarly, in the remaining case, when $\operatorname{rank}(N) =n$ and $\operatorname{rank}(A) \leq n-1$, 
the desired result follows directly from
Proposition~\ref{prop:fulldim_noPSS}(2).
\end{proof}

\subsection{Non-full-dimensional networks} \label{sec:not-full-dim}

In an earlier section, we saw a family of networks 
with $n$ species, $n$ reactant complexes, and exactly one conservation law, for which 
ACR and nondegenerate multistationarity coexist (Proposition \ref{prop:MSS_conserved}).  Our next result shows that this $n$
is the minimum number of reactant complexes (when there is one conservation law), and, furthermore, as the number of conservation laws increases, the minimum number of reactant complexes required decreases.

\begin{theorem}[Minimum number of reactants] \label{thm:min-num-reac-with-k-cons-law}
	Let $G$ be a reaction network with $n \geq 3$ species and $k \geq 1$ conservation laws (more precisely, $G$ has dimension $n-k$).  If there exists a vector of positive rate constants 
		$\kappa^*$  
		such that the system $(G,\kappa^*)$ is nondegenerately multistationary and has ACR in some species, 
	then $G$ has at least $n-k+1$ reactant complexes.
\end{theorem}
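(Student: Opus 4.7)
The plan is to prove this by contrapositive: assume $G$ has at most $n-k$ reactant complexes and show that no mass-action system $(G,\kappa^*)$ can be nondegenerately multistationary. The argument reduces to a clean dichotomy based on the number $j$ of reactant complexes of $G$. If $j = 1$, then Proposition~\ref{prop:2-or-3-rxns3species}(1) directly gives that $(G,\kappa^*)$ has no positive steady states for \emph{any} positive rate-constant vector $\kappa^*$, which is incompatible with multistationarity (let alone nondegenerate multistationarity). If instead $2 \leq j \leq n-k$, then we are exactly in the hypothesis of Proposition~\ref{prop:min-num-reac-with-k-cons-law} ($G$ has $n$ species, dimension $n-k$ with $k \geq 1$, and $j$ reactant complexes with $j \in \{2, \ldots, n-k\}$); this proposition asserts that every positive steady state of every mass-action system defined by $G$ is degenerate, contradicting the nondegenerate multistationarity hypothesis. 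Combining these two cases exhausts all possibilities with at most $n-k$ reactant complexes, yielding the desired bound $n - k + 1$.

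A couple of minor sanity checks are worth carrying out. The ACR hypothesis on $(G,\kappa^*)$ plays no essential role in the proof; only nondegenerate multistationarity and the conservation-law count are needed. For the edge case $k = n-1$ (one-dimensional networks), the range $\{2, 3, \ldots, n-k\} = \{2, \ldots, 1\}$ is empty, so only the $j = 1$ case requires attention, and Proposition~\ref{prop:2-or-3-rxns3species}(1) still rules it out, giving the trivial but correct bound $n-k+1 = 2$. I do not anticipate any substantive technical obstacle, since the two cited propositions already package the delicate arguments (the rank analysis of the matrix $N$ in~\eqref{eqn:ODEexpression} and the injective-multiplication argument that forces the reactant monomials to vanish). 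The only bookkeeping task is to verify that the two cases together exactly cover $j \leq n-k$, which they do.
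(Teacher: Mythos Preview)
Your proof is correct and matches the paper's approach exactly: prove the contrapositive by splitting on the number $j$ of reactant complexes, invoking Proposition~\ref{prop:2-or-3-rxns3species}(1) for $j=1$ and Proposition~\ref{prop:min-num-reac-with-k-cons-law} for $2 \leq j \leq n-k$. Your additional remarks (that the ACR hypothesis is not used and the handling of the edge case $k=n-1$) are accurate and go slightly beyond what the paper records.
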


\begin{proof} If $G$ has $k \geq 1$ conservation laws and at most $n-k$ reactant complexes, then Propositions~\ref{prop:2-or-3-rxns3species}(1) and
~\ref{prop:min-num-reac-with-k-cons-law} together imply that $G$ is {\em not} nondegenerate multistationarity.
\end{proof}

As noted earlier, the bound in Theorem~\ref{thm:min-num-reac-with-k-cons-law} 
is tight for $k=1$, due to Proposition~\ref{prop:MSS_conserved}.  
We also know that, for $k=n-1$, the bound holds vacuously (Proposition~\ref{prop:1-d-no-coexistence}).  Our next result shows that the bound is also tight for all remaining values of $k$ (namely, $ 2 \leq k \leq n-2$).

\begin{proposition}\label{conj:boundTight}
Let $n \geq 3$, and let $k \in \{2,3,\dots, n-2\}$.  If $k \neq n-2$, consider the following network:
\begin{align*}
	G_{n,k} ~=~
	&\left\{X_1+X_2+\sum_{j=n+2-k}^{n}X_j \xrightarrow{\k_1} 2X_3+\sum_{j=4}^nX_j, ~ X_3\xrightarrow{\k_2} X_1 ,  ~2X_3\xrightarrow{\k_3} 2X_2\right\}\\
 &\bigcup	\left\{
	X_{4}\overset{\k_{4}}\to 0,~
	\dots 	,~		
		X_{n+1-k}\overset{\k_{n-k+1}}\to 0
		\right\}~.
	\end{align*}
On the other hand, if $k = n-2$, consider the following network:
\begin{align*}
	G_{n,k} ~=~
	\left\{X_1+X_2+\sum_{j=4}^{n}X_j \xrightarrow{\k_1} 2X_3+\sum_{j=4}^nX_j, ~ X_3\xrightarrow{\k_2} X_1 ,  ~2X_3\xrightarrow{\k_3} 2X_2\right\}
	\end{align*}
Each such network $G_{n,k}$ satisfies the following:
\begin{enumerate} 
	\item $G_{n,k}$ has $n$ species, $n-k+1$ reactants, and $n-k+1$ reactions;
	\item $G_{n,k}$ has dimension $n-k$, and the following are $k$ linearly independent conservation laws: $x_1+x_2+x_3=T$ and $x_j = T_j$ for $j\in\{n-k+2,\dots,n\}$.
	\item for every vector of positive rate constants $\kappa$, the system $(G_n,\kappa)$ is nondegenerately multistationary and also has ACR in species $X_3$, $X_4, \dots, X_{n-k+1}$. 
\end{enumerate}
\end{proposition}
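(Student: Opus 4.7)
The plan is to mimic the proof of Proposition~\ref{prop:MSS_conserved}, which is the $k=1$ instance of essentially the same construction. The network $G_{n,k}$ is obtained from that proposition's network by inserting $k-1$ extra species $X_{n-k+2},\dots,X_n$ as catalyst-only participants in the first reaction (and consequently omitting the $X_j \to 0$ reactions for these indices). This adds $k-1$ trivial conservation laws $x_j = T_j$ but does not otherwise alter the qualitative structure of the steady-state analysis.

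I would verify parts~(1) and~(2) by inspection: the listed reactant complexes are distinct (only the first reactant involves $X_1$ or $X_2$, and the remaining reactants $X_3$, $2X_3$, and $X_j$ for $4 \leq j \leq n-k+1$ are pairwise distinct and bimolecular in only one case); the $k$ conservation laws follow from $\tfrac{dx_j}{dt}=0$ for $j \in \{n-k+2,\dots,n\}$ (these species appear only as catalysts in $G_{n,k}$) together with $\tfrac{d}{dt}(x_1+x_2+x_3)=0$, and they are clearly linearly independent; and an inspection of the reaction vectors confirms that the stoichiometric subspace has dimension exactly $n-k$.

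For part~(3), I would substitute the $k-1$ trivial conservation laws into the mass-action ODEs, after which the first-reaction term takes the form $\widetilde\kappa_1 x_1 x_2$, where $\widetilde\kappa_1 := \kappa_1 \prod_{j \geq n-k+2} T_j > 0$. Combining the ODEs for $x_1$ and $x_2$ as in the proof of Proposition~\ref{prop:MSS_conserved} then forces $x_3 = \kappa_2/(2\kappa_3)$ at every positive steady state (ACR in $X_3$), and the ODE for each $X_j$ with $4 \leq j \leq n-k+1$ (vacuous when $k = n-2$) forces $x_j = \kappa_2^2/(2\kappa_3\kappa_j)$ (ACR in $X_j$). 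The remaining steady-state conditions reduce, via $x_1+x_2+x_3 = T$, to finding positive solutions of a quadratic in $x_1$ of the same form as in the proof of Proposition~\ref{prop:MSS_conserved}, and the same line-hyperbola intersection argument yields two distinct positive solutions whenever $T$ (and the $T_j$) are chosen sufficiently large.

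I do not foresee a substantive obstacle. The only care required is handling the two slightly different shapes of $G_{n,k}$ (the $k=n-2$ case omits all $X_j \to 0$ reactions) uniformly, and verifying that the nondegeneracy computation in the three-variable reduction for $(x_1,x_2,x_3)$ lifts correctly to the full $n$-variable Jacobian restricted to the stoichiometric compatibility class. This last step is routine because the $X_j$-ODEs for $4 \leq j \leq n-k+1$ each contribute a nonzero $-\kappa_j$ diagonal entry (so each such variable is independently ``pinned down'' nondegenerately), while the species $X_j$ for $j \geq n-k+2$ contribute no dynamics outside their own conservation-law directions.
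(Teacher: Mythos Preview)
Your proposal is correct and follows exactly the approach the paper takes: the paper's own proof simply says the result ``can be checked directly, in a manner similar to the proof of Proposition~\ref{prop:MSS_conserved},'' noting that ACR in $X_3,\dots,X_{n-k+1}$ and two nondegenerate positive steady states arise for $T$ large enough. You have supplied more detail than the paper does (the substitution $\widetilde\kappa_1 = \kappa_1\prod_{j\geq n-k+2}T_j$, the explicit handling of the $k=n-2$ case, and the nondegeneracy discussion), but the underlying argument is identical.
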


\begin{proof}
This result can be checked directly, in a manner similar to the proof of 
Proposition \ref{prop:MSS_conserved}.
Indeed, for every vector of positive rate constants, there is ACR in species $X_3,X_4, \ldots,X_{n-k+1}$ and exactly two nondegenerate positive steady states when $T$ is large enough.
\end{proof}

The reaction networks in Proposition \ref{conj:boundTight} are not bimolecular, and they contain reactions with many catalyst-only species (namely $X_{n-k+2},\dots,X_n$). 
We do not know whether there exist reaction networks 
that 
are bimolecular and do not contain reactions with catalyst-only species, and yet (like the networks in Proposition \ref{conj:boundTight}) 
show that the lower
bound in Theorem~\ref{thm:min-num-reac-with-k-cons-law} is tight.

\section{Discussion} \label{sec:discussion}
In this article, we proved lower bounds in terms of the dimension and the numbers of species, reactant complexes (and thus reactions), and all complexes (both reactant and product complexes) needed for the coexistence of ACR and nondegenerate multistationarity.  Additionally, we 
showed that these bounds are tight, via the network 
$\{A+B \to 2C \to 2B,~ C\to A \}$ (Example~\ref{example:min3}).

Networks like the one in Example \ref{example:min3} contain special structures that may be biologically significant. 
Exploring such structures will aid in establishing design principles for creating networks with ACR and multistationarity. We plan to explore such networks and their architecture in the future. 

In the present work, our interest in multistationarity comes from the fact that it is a necessary condition for multistability. Another interesting direction, therefore, is to investigate conditions for coexistence of ACR and {\em multistability}, rather than multistationarity. The ``minimal" networks in the current work admit only two positive steady states and are not multistable.  Hence, we conjecture that the lower bounds (on dimension and the numbers of species, reactant complexes, and all complexes) for the coexistence of ACR and multistability are strictly larger than the bounds proven here for ACR and multistationarity. 

Finally, we are interested in the conditions for the coexistence of other combinations of biologically significant dynamical properties, such as ACR and oscillations. In addition to the minimum requirements for their coexistence, we also hope to discover new network architectures or motifs that can be used to design synthetic networks possessing these dynamical properties.

\subsection*{Acknowledgements}
{\small
This project began at an AIM workshop on ``Limits and control of stochastic reaction networks'' held online in July 2021.  
AS was supported by the NSF (DMS-1752672).  The authors thank Elisenda Feliu, Oskar Henriksson, Badal Joshi, and Beatriz Pascual-Escudero for many helpful discussions.}

\bibliographystyle{plain}

\bigskip \bigskip

\noindent
\footnotesize {\bf Authors' addresses:}

\smallskip

\noindent Nidhi Kaihnsa,
\  University of Copenhagen \hfill  {\tt nidhi@math.ku.dk}

\noindent Tung Nguyen, Texas A\&M University
\hfill {\tt daotung.nguyen@tamu.edu}

\noindent Anne Shiu, Texas A\&M University
\hfill {\tt annejls@tamu.edu}

\end{document}